\documentclass{amsart}

\usepackage{amsmath, amssymb, enumerate, hyperref, graphicx, setspace, color}
\usepackage[left=1in,top=1in,right=1in,bottom=1in]{geometry}
\usepackage[usenames,dvipsnames]{xcolor}

% Refcheck trawls the document for unused / unnecessary labelled 'stuff'.
% % \usepackage{refcheck} % %

% Diagram Packages: Need to keep in this order.
\usepackage{tikz}
\usetikzlibrary{calc, arrows, shapes}
\usepackage{tikz-cd}

% Style-Formatting, Naming Conventions for Theorems, etc. 
\onehalfspace
\setcounter{tocdepth}{1}
\numberwithin{equation}{section}

\theoremstyle{definition}
\newtheorem{thm}{Theorem}
\newtheorem{cor}[thm]{Corollary}
\newtheorem{lem}[thm]{Lemma}
\newtheorem{prop}[thm]{Proposition}

\theoremstyle{definition}
\newtheorem{defn}[thm]{Definition}

\newtheorem{rem}[thm]{Remark}
\newtheorem{ex}[thm]{Example}

% Commands
%
% Collections of Sets, Function names, etc.
\newcommand{\A}{\mathcal A}
\newcommand{\B}{\mathcal B}
\newcommand{\C}{\mathcal C}

\newcommand{\M}{\mathcal M}
\newcommand{\Q}{\mathcal Q}

\newcommand{\T}{\mathcal T}

% Shorthand for Standard Notations
\newcommand{\R}{\mathbb R}
\newcommand{\N}{\mathbb N}
\newcommand{\Z}{\mathbb Z}

% Categories and Operators
\newcommand{\catname}[1]{{\normalfont\textbf{#1}}}
\newcommand{\catC}{\catname{C}}
\newcommand{\catD}{\catname{D}}
\newcommand{\catJ}{\catname{J}}
\newcommand{\Set}{\catname{Set}}
\newcommand{\SM}{\catname{StatMeas}}

\newcommand{\Top}{\catname{Top}}
\newcommand{\Mon}{\catname{Mon}}
\newcommand{\Meas}{\catname{Meas}}

\renewcommand{\d}{\textbf{d}}
\renewcommand{\r}{\textbf{r}}
\newcommand{\E}{\operatorname{E}}

\newcommand{\Dom}{\operatorname{Dom}}
\renewcommand{\Im}{\operatorname{Im}}

% Colors

% %-----------------------------------------------------------------------------% %
% %-----------------------------------------------------------------------------% %

\title{Tarski Measure}
\author{Bryson}
\date{Spring 2013}

\begin{document}

\maketitle
\tableofcontents

\section{Introduction}
This work investigates the intrinsic properties of measurements defined by symmetries of spaces. Its genesis lies in Tarski's measure theoretic work on stationary measures and the attendant duplication theorems.  Our purpose is twofold.  Firstly, we wish to identify natural monoids of \emph{sizes} or \emph{quantities} given a measurable space with some symmetries and elucidate the structure of these monoids. And second, we would like to present a suitable category theoretic framework within which to study the relationships between such spaces with symmetry.  This second goal is a byproduct of the current atmosphere in Mathematics. For some time it has been clear that it is fruitful to study not only objects of interest but the connections between them. Categories formalize this. The first goal stands in sharp contrast with the historical efforts which additionally assume a model for quantities: namely, the non-negative real numbers. While this view has some roots in antiquity, since the introduction of Measure Theory just over a century ago, its been axiomatic.

Henr\'i Lebesgue is the father of modern measure and integration theory.  His celebrated 1902 dissertation \emph{Intégrale, longueur, aire} is foundational.  Within its pages and the lectures following it, Lebesgue introduced $\sigma$-algebras, Lebesgue measure, integration, the dominated convergence theorem, etc. It is difficult to overestimate the importance of Lebesgue's work. It is here that we get the modern definition of a measure as a countably additive map from a $\sigma$-algebra to $[0,\infty]$ taking the empty set to $0$. The dissertation lays out a remarkably successful extension of Calculus which has proved so amenable to further generalizations that Lebesgue's definitions have become ubiquitous.

However, it is the common necessity (particularly in the presence of symmetries) of working with $\sigma$-algebras much smaller than the full power set of a space that first interested me in Measure Theory.  To wit, I am still amazed by Giuseppe Vitali's 1905 construction of a non-Lebesgue measurable set. This construction was the first published demonstration of non-measurability and forcefully makes the point that jointly assuming there exists a map $\mu: 2^\R \to [0,\infty]$ satisfying (i) $\mu$ is countably additive, (ii) $\mu$ is stationary with respect to the action of the Euclidean group, and (iii) $\mu([0,1]) = 1$ must lead to contradictions.

In the two decades following Vitalli's discovery a pair of counter-intuitive results were proven. They more clearly demonstrated the underlying tension between Lebsegue's model for measure and symmetries. 1914 brought the Hausdorff paradox which shows that one can nearly duplicate the 2-sphere using a finite partition and transformations from $SO_3$. Then, the 1924 proof of Banach and Tarski's infamous theorem on the duplication of spheres: It is possible to partition a solid ball in $\R^3$ into finitely many pieces such that the pieces can be rearranged by rotation and translation into two solid balls of the same size as the original. The common thread binding these `paradoxical' results is the ingenious use of geometric constructions to reveal inconsistencies in the mathematical formalization of stationary measures.

By 1938 Tarski had worked out a rich theory on the existence (or non-existence) of finitely additive stationary measures in great generality. Tarski realized that he could weaken the assumptions on his symmetries from being groups to pseudogroups, and indeed inverse semigroups which allows one to model local symmetries as well as global symmetries.\footnote{Lawson's text \emph{Inverse Semigroups}~\cite{lawson1998} covers such notions comprehensively.  In differential geometry, \emph{local structures} provide a natural example of geometries for which there is not necessarily a group but naturally an inverse semigroup of symmetries.  See Appendix~\ref{sym}.}  His work provides necessary and sufficient algebraic conditions for a set with an inverse semigroup of symmetries to carry a stationary measure defined on all subsets of the set. This characterization turns critically on whether or not there exist subsets which may be duplicated by the actions of the symmetries and of partitions. Tarski's formalism and its countably additive generalizations provide a foundation from which we may investigate abstract measures taking values in monoids other than $\overline{\R}^+$.

The central philosophy of measurement present in Tarski's work holds that within a space with some symmetry there is a natural notion of `same size'.  Two objects which are similar are of the same size. Two objects which can be partitioned into pieces which are pairwise similar are of the same size. This notion is taken as axiomatic in Tarski's measure theory but it is informed by common assumptions about size. Imagine a cake at a party.  Consider the process of cutting the cake into pieces and placing those pieces at each setting on a table.  It is reasonable to claim that the original cake and together the collection of pieces of cake, are of the same size.  Why? Because we assume that size is preserved by cutting up (partitioning) and by moving the parts (the action of symmetries).

Tarski's essential departure from the classical measure theory of Lebesgue was to emphasize symmetry as a defining feature of measure.  Often bundled together are the observation that, whatever size is, it should be preserved by partition and symmetry, AND the assumption that sizes are modelled by non-negative real numbers: ``I have 29 cubic inches of cake!"  Tarski parsed this out. He defined the \emph{type} of a measurable object to be the equivalence class of objects which---according to partition and symmetry---were necessarily the same size.  In so doing he constructed the \emph{type monoid} of a stationarily measurable space.  Then, and only then, did he attempt to represent types by non-negative real numbers. 

It is astonishing how much structure is present in these type monoids. They are canonically partially ordered commutative monoids whose addition law is subtly intertwined with the idempotent elements. These monoids embed into inverse semigroups called \emph{quantity spaces} in an analogous fashion to the embedding of $[0,\infty]$ into the one point compactification of the reals $\R \sqcup \{ \infty \} $. Moreover, this embedding gives rise to a hierarchical view of measurement that restricts to various `scales' modelled by the isotropy groups of quantity spaces. Perhaps most importantly, the map assigning to each measurable set, the type of that set is an intrinsic measure. This \emph{Tarski measure} satisfies in great generality the basic theorems of measure theory. 

In particular, this work introduces the category $\SM$ of stationarily measurable spaces together with the construction of a co-functor $\T: \SM \to \Mon$.\footnote{Where $\Mon$ is the category of monoids in $\Set$ consisting of monoids in the usual sense---as semigroups with unit---and their homomorphisms.}
The construction induces a map referred to as \emph{Tarski Measure} having many of the properties of a stationary measure as summarized in Section~\ref{results}.  Additionally, we identify the lattice of minimal $\sigma$-ideals of null sets for stationary measures which do not witness paradoxes of measure. Throughout, we make use of several elementary results from the theory of Inverse Semigroups (which we use to model symmetries) as well as some basic categorical concepts (functors, coproducts, limits, etc.) where they simplify the expression of relationships.  Appendices~\ref{sym} and~\ref{cats} present this material with some examples to aid the reader unfamiliar with these topics. 

\section{Stationarily Measurable Spaces}
The category of stationarily measurable spaces $\SM$ consists of measurable spaces with a notion of symmetry modelled by an inverse monoid action. That is, $\SM$ has as objects, triples $(X,\Phi_S,\A)$.  In this triple, $(X,\A)$ is a measurable space e.g. a set $X$ equipped with a $\sigma$-algebra $\A$ of subsets of $X$.  $S$ is an inverse monoid which are a particular generalization of groups in which for each $s \in S$ there is a unique \emph{weak inverse} $s^*$ such that $ss^*s = s$ and $s^*ss^* = s^*$.  Finally, $\Phi_S$ is an action of $S$ on $(X,\A)$.  Viewing $S$ as a category with a single object, this can be presented as a functor $\Phi_S: S \to \Meas$ such that $obj(S)=\{\cdot \}~\overset{\Phi_S}{\longmapsto}~(X,\A)$.  Alternatively, the action can be presented as a map $\Phi_S: S \times X \to X$ such that for each $s \in S$ the map $\Phi_s: X \to X$ is measurable. To simplify notation, we employ the convention of denoting the action as left multiplication whenever possible. Hence, $\Phi_s$ becomes $s$ and $(X,\Phi_S,\A)$ becomes $(X,S,\A)$.

Morphisms in $\SM$ are maps $f:(X,S,\A) \to (Y,T,\B)$ where $f$ is an $(\A,\B)$-measurable function and there exists an inverse monoid homomorphism $f^*:T \to S$ such that for all $t \in T$ and $B \in \B$ we have:
$$
f^{-1}(t^{-1}(B)) = (f^*(t))^{-1}(f^{-1}(B))
$$
or, rewriting the compositions $(t \circ f)^{-1}(B) = (f \circ f^*(t))^{-1}(B)$.  That is, $T$-orbits pullback homomorphically to $S$-orbits.  Throughout, we will consider such triples $(X,S,\A)$ referred to as stationarily measurable spaces.  Stationarily measurable spaces are not uncommon. Topological groups, local structures and many manifolds including $\R^n$ (together with the Euclidean group of transformations) are examples.

It should be noted that despite the strong symmetry condition on the morphisms of $\SM$, in a sense $\SM$ refines the familiar category of measurable spaces, $\Meas$. Recall, $\Meas$ is composed of the measurable spaces which are pairs $(X,\A)$ where $X$ is a set and $\A$ is a $\sigma$-algebra of subsets together with the measurable maps between such spaces. There is an evident forgetful functor $U: \SM \to \Meas$ defined by mapping $(X,S,\A) \mapsto (X,\A)$ and $f \mapsto f$.  Indeed, $\Meas$ is equivalent to the subcategory of spaces in $\SM$ equipped with trivial group actions. To summarize, we note the following result:
\begin{quotation}
The functor $R: \Meas \to \SM$ mapping $(X,\A) \mapsto (X,0,\A)$ and $f \longmapsto f$ assigning the trivial group of symmetries to measurable spaces is right adjoint to the forgetful functor $U:~(X,S,\A) \to (X,\A)$.
\end{quotation}
This result is a formalization of one of the two basic premises behind Tarski measure.  Even if they are trivial, symmetries are always present. The second premise being that symmetries induce measures.  These two ideas are the primary motivation behind the definition of the category $\SM$ as a suitably general context in which measurable spaces equipped with symmetries interact.

\section{Tarski Measure}\label{results}
Tarski~\cite{tarski1938} originated the notion of type monoids for finitely additive measures in 1938 proving two seminal results in the theory of finitely additive measures along the way.\footnote{Stan Wagon's text \emph{The Banach-Tarski Paradox}~\cite{wagon1993} is an excellent reference on theory of finitely additive measures.}  First he gave an algebraic condition for a commutative monoid equipped with its canonical preorder ($\alpha \preceq \beta \iff \exists \gamma$ such that $\alpha + \gamma = \beta$) to admit a representation in the non-negative reals normalizing a specified element. Then he proved an existence theorem for finitely additive stationary measures defined for all subsets of a given set.

More precisely, Tarski presented the following two results.
\begin{quotation}
	If $(M,+,0; \varepsilon)$ is a commutative monoid with specified element $\epsilon$ then: $n \in \N$, $(n+1)\varepsilon \not\preceq n\varepsilon$ for all $n \in \N$ if and only if there is a homomorphism $\mu: M \to [0,\infty]$ such that $\mu(\varepsilon) = 1$
\end{quotation}
and, utilizing the (finitary) type monoid that;
\begin{quotation}
	Let $S$ be an inverse monoid acting on a set $X$ and $E \subseteq X$. Then there exists a finitely additive, $S$-stationary measure $\mu: 2^X \to [0,\infty]$ with $\mu(E) = 1$ if and only if $E$ is not $S$-paradoxical.\footnote{A set $E \subseteq X$ is finitely $S$\emph{-paradoxical} if in the (finitary) type monoid $2[E] \preceq [E]$.}
\end{quotation}
In this manner, Tarski transformed the question of the existence of a stationary measure into a question of the existence of a monoid homomorphism from a (finitary) Type monoid into $\overline{\R}^+$ which he was able to resolve.  Implicit in these results is the idea that (finitely additive) classical stationary measures factor through (finitary) Type monoids. Indeed, Theorem~\ref{meascoor} can be seen as a countably additive analogue to this fact.

In later work, most notably the 1949 text~\emph{Cardinal Algebras}~\cite{tarski1949}, Tarski expanded the formalism to include countably infinitary operations and hence investigation into countably additive stationary measures. Contemporaneously, we have Robinson's 1947 paper \emph{On the Decomposition of Spheres}~\cite{robinson1947} which proved the tight Banach-Tarski paradox: The unit ball in $\R^3$ admits a 5 piece partition (the fifth taken to be a singleton) such that the members of the partition are pairwise congruent to a five piece partition of TWO unit balls and no fewer pieces will suffice. Between 1942 and 1953, Maharam produced a series of papers~\cite{maharam1942, maharam1949, maharam1953} exploring primarily the representation of type monoids and related constructions by real numbers. Chuaqui~\cite{chuaqui1969, chuaqui1977} worked similarly on real number representations of type monoids and more generally Tarski's generalized cardinal algebras.

\subsection{The Tarski Type co-Functor}
While authors differ in approach and formalism, the central aspects of the theory are consistent. In our language these core ideas are as follows. For any stationarily measurable space $(X,S,\A)$ there is an intrinsic notion of \emph{type}. The map, called \emph{Tarski measure} taking a set to its type satisfies many of the familiar properties of an $S$-stationary measure on $\A$. And, under some circumstances the collection of types admits a representation in $[0,\infty]$. This last situation being equivalent to the existence of a (not necessarily unique) classical stationary measure.

The present work's motivation lies in an attempt to gather results in this area and modernize their statement using the categorical framework given here.  The main theorems of this paper, stated below, are a first step in that direction.

\begin{thm}\label{bigtheorem}
	The Tarski Type construction $\T: \SM \to \Mon$ is a co-functor with the following properties:
	
	\begin{enumerate}
		\item \textbf{Properties of the Object Map:} Let $(X,S,\A) \in obj(\SM)$ and $\alpha, \beta \in \T(X,S,\A)$;
			\begin{description}
				\item[Commutativity:]
					$\alpha + \beta = \beta + \alpha$
				\vspace{4 pt}
				\item[Ordering:]
					The canonical preorder ($\alpha \preceq \beta \iff \exists \gamma$ such that $\alpha + \gamma = \beta$) is a partial ordering.
				\vspace{4 pt}
				\item[$\N$-Cancellation:]
					If for some $n \in \N$, $n \alpha = n \beta$; then $\alpha = \beta$.
				\vspace{4 pt}
			\end{description}
		\item \textbf{Properties of Tarski Measure:} There is a canonical mapping $m_{(X,S,\A)}: \A \to \T(X,S,\A)$ referred to as \emph{Tarski Measure}, with the following properties;
			\begin{description}
				\item[Null Identity:]
					$m(\varnothing) = 0$ the additive identity in $\T(X,S,\A)$ 
				\item[Countable Additivity:]
					If $\{A_{i}\}^{\infty}_{i=1}$ is a sequence of disjoint sets in $\A$ then $m\left( \bigcup\limits_{i=1}^{\infty} A_{i} \right) = \sum\limits_{i=1}^{\infty} m(A_{i})$
				\item[Stationarity:]
					$m(A) = m(s^{-1}A)$ for all $s \in S$
			\end{description}
		\item \textbf{Properties of the Morphism Map:} For $f: (X,S,\A) \to (Y,T,\B) \in mor(\SM)$, we have $\T f: \T(Y,T,\B) \to \T(X,S,\A)$ and;
			\begin{description}\label{propmorph}
				\item[$\sigma$-Homomorphic:]
					$\T f \left( \sum\limits_{i=1}^{\infty} a_i \right) = \sum\limits_{i=1}^{\infty} \T f (a_i)$
				\vspace{4 pt}
				\item[Commutativity of Measurement:]
					$m_{(X,S,\A)} \circ f^{-1} = \T f \circ m_{(Y,T,\B)}: \B \to \T(X,S,\A)$
			\end{description}
	\end{enumerate}
\end{thm}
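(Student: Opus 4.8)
The plan is to first make the construction of $\T$ explicit, since every clause is read off from it. Given $(X,S,\A)$, telescope to $\widehat{X}=X\times\N$ with the $\sigma$-algebra $\widehat{\A}$ generated by the rectangles $A\times\{n\}$, and let $\widehat{\S}$ be the inverse monoid of partial measurable bijections of $\widehat{X}$ that are \emph{countably piecewise $S$-and-column maps}: a partition $\bigsqcup_k D_k$ of the domain on each block of which the map has the form $(x,n)\mapsto(s_k x,\pi(n))$ for some $s_k\in S$ (with $D_k$ inside the domain idempotent $X_{s_k^* s_k}$) and some injection $\pi$ of columns. Call $\widehat{A},\widehat{B}\in\widehat{\A}$ \emph{equidecomposable}, $\widehat{A}\sim\widehat{B}$, when some element of $\widehat{\S}$ carries $\widehat{A}$ onto $\widehat{B}$. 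Set $\T(X,S,\A)=\widehat{\A}/{\sim}$, with $[\widehat{A}]+[\widehat{B}]$ realised by pushing $\widehat{A}$ into the odd columns and $\widehat{B}$ into the even columns, countable sum $\sum_i[\widehat{A}_i]$ realised by pushing $\widehat{A}_i$ into the $i$-th column block, unit $[\varnothing]$, and $m_{(X,S,\A)}(A):=[A\times\{0\}]$. For a morphism $f$ with witness $f^*$, let $\widehat{f}^{-1}$ apply $f^{-1}$ in each column and define $\T f([\widehat{B}]):=[\widehat{f}^{-1}(\widehat{B})]$.

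First I would dispatch the bookkeeping and the routine clauses. That $+$ and $\sum$ are well defined and independent of the chosen column-embeddings follows because the competing reindexings are themselves members of $\widehat{\S}$; that $[\varnothing]$ is a unit is immediate. That $\T f$ is well defined is where the morphism axiom of $\SM$ earns its keep: the identity $f^{-1}\circ t^{-1}=(f^*(t))^{-1}\circ f^{-1}$, applied column-wise, shows $\widehat{f}^{-1}$ sends a $\widehat{\S}'$-equidecomposition of $\widehat{B}$ to a $\widehat{\S}$-equidecomposition of $\widehat{f}^{-1}(\widehat{B})$, since preimages of partitioning sets partition the preimage and preimages of $S'$-moves become $S$-moves; one also checks independence of the choice of $f^*$ and compatibility with composition, giving the co-functoriality. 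The $\sigma$-homomorphism clause and \textbf{Null Identity} then fall out at once, as does \textbf{Commutativity} of $+$ (disjoint union is symmetric) and \textbf{Commutativity of Measurement} (which is just $\widehat{f}^{-1}(B\times\{0\})=f^{-1}(B)\times\{0\}$). \textbf{Stationarity} is the statement that $A\times\{0\}$ and $(s^{-1}A)\times\{0\}$ are related by the single $\widehat{\S}$-generator coming from $s$ — the precise reading of $s^{-1}A$ being the one restricted to the domain idempotent $X_{s^*s}$, on which $\Phi_s$ is an honest bijection. \textbf{Countable Additivity} amounts to exhibiting a column-shifting element of $\widehat{\S}$ carrying $\bigl(\bigsqcup_i A_i\bigr)\times\{0\}$ onto $\bigsqcup_i(A_i\times\{i\})$, which is immediate once the $A_i$ are disjoint.

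The substantive content is the \textbf{Ordering} and \textbf{$\N$-Cancellation} clauses, and for these the plan is to show $(\T(X,S,\A),+,\textstyle\sum)$ is a \emph{cardinal algebra} in the sense of Tarski~\cite{tarski1949} and then quote the structure theory. The axioms governing $\sum$ alone — that $\sum_i a_i=a_0+\sum_i a_{i+1}$ and $\sum_i(a_i+b_i)=\sum_i a_i+\sum_i b_i$ — are further column-reindexing bookkeeping. The two nontrivial axioms are \emph{refinement} (if $[\widehat{A}]+[\widehat{B}]=\sum_i c_i$ then the $c_i$ split compatibly), which I would obtain by pulling the witnessing $\widehat{\S}$-element back through the partition and intersecting with $\widehat{A}$ and $\widehat{B}$; and the \emph{remainder} axiom (if $a_i=b_i+a_{i+1}$ for all $i$ then $a_i=c+\sum_{j\ge i}b_j$ for some $c$), which is a descending-intersection argument on representing sets. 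With the cardinal-algebra axioms in hand, the canonical preorder is automatically a partial order — this is the Banach--Schr\"oder--Bernstein phenomenon, one of the first consequences Tarski derives — and $n\alpha=n\beta\Rightarrow\alpha=\beta$ is another of the standard consequences. If one prefers to stay self-contained, antisymmetry of $\preceq$ can instead be proved directly by the Banach mapping (Knaster--Tarski) fixed-point argument, which uses only that the class of $\widehat{\S}$-moves is closed under restriction, inversion, composition and countable disjoint union, and $\N$-cancellation by the usual ``division by $n$'' argument for type semigroups.

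The step I expect to be the main obstacle is establishing refinement (equivalently, running the Banach--Schr\"oder--Bernstein / fixed-point machinery) in the countably-additive, inverse-monoid setting: one must check that all partitions invoked stay countable and measurable, that the fixed point produced by the Knaster--Tarski argument is again a measurable set, and that the class of allowed moves really is closed under the countable unions the argument needs — which is precisely why $\widehat{\S}$ is built from \emph{countably} piecewise $S$-and-column maps rather than anything coarser. A secondary care point, easy to overlook, is verifying that $\T f$ is genuinely independent of the chosen witness $f^*$ and strictly functorial, so that $\T:\SM\to\Mon$ is a co-functor on the nose rather than up to isomorphism.
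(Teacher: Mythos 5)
Your proposal is correct and follows essentially the same route as the paper: your telescoped space $X\times\N$ with its inverse monoid of countably piecewise $S$-and-column maps is just a concrete model of the paper's disjunctive algebra $\overline{\A}$ of formal countable coproducts, your equidecomposability relation matches the paper's (which likewise reduces to bijective realizations via weak inverses), and the two substantive clauses are handled the same way --- antisymmetry by the Banach--Schr\"oder--Bernstein fixed-point argument, and $\N$-cancellation by appeal to the Tarski/Chuaqui cardinal-algebra structure theory. The only difference is that you propose verifying the cardinal-algebra axioms (refinement, remainder) explicitly before quoting the consequences, whereas the paper proves Schr\"oder--Bernstein directly and cites Chuaqui's Theorem 1.12 for cancellation without spelling out that verification.
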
 

Type monoids $\T = \T(X,S,\A)$ are universal constructions which play an analogous role to that of the non-negative reals in classical measure theory. Just as $\overline{\R}^+ = [0, \infty]$ embeds into the one point compactification of $\R$ to great effect---for instance, allowing the proof of basic results like Continuity from Above as well as opening the consideration of signed measures---we have that any given $\T$ embeds into an inverse monoid $\Q$ referred to as \emph{quantity space}.

\begin{thm}[\textbf{The Embedding Theorem}]\label{emtheo}
	Let $(X,S,\A) \in \SM$.  Then there exists an inverse monoid $\Q(X,S,\A)$ such that the type monoid $\T(X,S,\A)$ embeds into $\Q(X,S,\A)$.  Moreover, $\Q(X,S,\A)$ is commutative, partially ordered, and its idempotents form a distributive lattice.
\end{thm}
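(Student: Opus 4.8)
The plan is to construct $\Q(X,S,\A)$ from $\T = \T(X,S,\A)$ by a general algebraic procedure that mimics the passage from $[0,\infty]$ to $\R \sqcup \{\infty\}$: form a "group of differences" construction but in the inverse-semigroup category rather than the group category. Since Theorem~\ref{bigtheorem} gives us that $\T$ is a commutative monoid which is $\N$-cancellative and partially ordered by the canonical preorder, the first step is to show these hypotheses suffice to embed $\T$ into an inverse monoid of formal differences. Concretely, I would consider pairs $(\alpha,\beta) \in \T \times \T$ modulo the equivalence $(\alpha,\beta) \sim (\alpha',\beta')$ iff $\alpha + \beta' = \alpha' + \beta$, with partial addition $(\alpha,\beta) + (\gamma,\delta) = (\alpha+\gamma, \beta+\delta)$. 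The subtlety—and the reason the result is an inverse monoid and not a group—is that $\T$ typically contains idempotents other than $0$ (e.g. the type of a paradoxical set, or of the whole space when it absorbs itself), so the naive Grothendieck group collapses too much; one must instead localize only at the cancellable elements, or equivalently take the inverse hull generated by $\T$ acting on itself by translation. I would define $\Q$ as the inverse monoid of partial translations $\tau_\alpha$ of $\T$ together with their weak inverses $\tau_\alpha^*$, composed formally, and then quotient by the relations forced by $\N$-cancellation.

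The second step is to verify the three asserted structural properties of $\Q$. Commutativity is inherited directly from commutativity of $\T$ since all generators $\tau_\alpha$ commute. For the partial order, I would extend the canonical preorder on $\T$ to $\Q$ via the natural partial order on the inverse monoid ($x \leq y$ iff $x = ye$ for some idempotent $e$) and check it restricts correctly on the image of $\T$; antisymmetry should follow from $\N$-cancellation exactly as in part~(1) of Theorem~\ref{bigtheorem}. The claim that the idempotents of $\Q$ form a distributive lattice is the heart of the matter: idempotents in an inverse monoid always form a meet-semilattice (the semilattice of idempotents $E(\Q)$), with meet given by product, so the content is (a) that finite joins exist and (b) the distributive law $e \wedge (f \vee g) = (e \wedge f) \vee (e \wedge g)$ holds. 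I expect the idempotents of $\Q$ to correspond to certain "ideal-like" subtypes—types $\varepsilon$ with $\varepsilon + \varepsilon = \varepsilon$ arising from the $\sigma$-ideal structure on $\A$—and joins should be realized by unions of the underlying measurable sets, with distributivity then descending from the distributivity of the Boolean $\sigma$-algebra $\A$ (union over intersection).

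The main obstacle I anticipate is showing that the embedding $\T \hookrightarrow \Q$ is genuinely injective, i.e. that the formal-difference/localization construction does not identify distinct types. This is where $\N$-cancellation must be used in an essential and possibly delicate way: one needs that if $\alpha + \gamma = \beta + \gamma$ in $\T$ then already $\alpha = \beta$ whenever $\gamma$ is one of the elements we are inverting, and more generally that the congruence generated on $\T \times \T$ restricts to equality on the diagonal-complement. A secondary difficulty is making precise exactly which elements of $\T$ one inverts—inverting everything gives a group that may be trivial (if the whole-space type is absorbing), so the construction must be the inverse hull, and I will need a lemma characterizing $E(\Q)$ intrinsically in order to prove the distributive-lattice claim. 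I would handle injectivity first, in isolation, as a purely monoid-theoretic lemma about $\N$-cancellative partially ordered commutative monoids, and only afterward layer on the inverse-semigroup and lattice structure; that modular approach should keep the bookkeeping manageable.
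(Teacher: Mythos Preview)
Your proposal correctly identifies the obstacles (non-cancellativity of $\T$, absorbing idempotents collapsing a naive Grothendieck group) but misses the structural lemma that the paper uses to dissolve them. The paper does \emph{not} form a single inverse hull or a global formal-difference monoid. Instead it first proves that $\T$ decomposes as a disjoint union $\T = \coprod_{e \in \E(\T)} \T_e$ of \emph{cancellative} isotropy monoids, one for each idempotent $e$, where $\T_e$ is the set of types lying above $e$ but below no strictly larger idempotent (Proposition~\ref{isomon}). Each $\T_e$ is genuinely cancellative, so each embeds into its Grothendieck group $\Q_e$; reassembling gives $\Q = \bigsqcup_e \Q_e$ with addition routed through the join in $\E(\T)$. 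This is a Clifford semigroup (a semilattice of abelian groups), hence automatically a commutative inverse monoid, and the paper notes that the result is in fact a special case of Clifford--Preston, Theorem~7.52.

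The gap in your plan is that $\N$-cancellation ($n\alpha = n\beta \Rightarrow \alpha = \beta$) is not the same as additive cancellation ($\alpha + \gamma = \beta + \gamma \Rightarrow \alpha = \beta$), and it is the latter you need for injectivity of any difference construction. Globally $\T$ is not additively cancellative---for any nonzero idempotent $f$ and any $a \preceq f$ one has $a + f = 0 + f$---so a formal-difference quotient on all of $\T \times \T$ will identify $a$ with $0$ unless you have already isolated the cancellative strata. Your ``localize only at the cancellable elements'' gestures toward this, but the right move is stratification, not localization. Once you have the $\T_e$ decomposition, both of your anticipated difficulties evaporate: injectivity is automatic (cancellative monoids embed in their group completions), and $\E(\Q) = \E(\T)$ on the nose since each $\Q_e$ is a group with unique idempotent $e$, so the distributive-lattice claim reduces to the already-established distributivity of $\E(\T)$ rather than requiring any new analysis of $\Q$.
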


Quantity spaces $\Q$ are distributive lattices of groups called \emph{isotropy groups} with extra structure allowing one to sum elements from different isotropy groups. This view has a useful interpretation. The isotropy groups represent all the different possible scales of measurement. The lattice ordering specifies the relative sensitivity of each scale of measurement. And adding two quantities of different scales returns the sum of their images in the finest scale less sensitive than both of their respective original scales.

Together, Theorems~\ref{bigtheorem} \&~\ref{emtheo} allow us to establish some basic results of measure theory in much wider generality. In particular, we will demonstrate that Tarski measure has the following properties.
\begin{thm}\label{basics}
	Let $(X,S,\A)$ be a stationarily measurable space and denote its Tarski measure $m: \A \to \T$.
	\begin{description}
		\item[Monotonicity:]
			If $A,B \in \A$ and $A \subseteq B$, then $m(A) \preceq m(B)$.
		\vspace{7pt}
		\item[Subadditivity:]
			If $\{ A_i \}_{i=1}^{\infty} \subseteq \A$, then $m\left( \bigcup\limits_{i=1}^{\infty} A_i \right) \preceq \sum\limits_{i=1}^{\infty} m(A_i)$.
		\item[Continuity from below:]
			If $\{ A_i \}_{i=1}^{\infty} \subseteq \A$ and $A_1 \subseteq A_2 \subseteq \cdots$, then $\varinjlim m(A_i) = m\left( \bigcup\limits_{i=1}^{\infty} A_i \right)$
		\item[Continuity from above:]
			If $\{ A_i \}_{i=1}^{\infty} \subseteq \A$ and $A_1 \supseteq A_2 \supseteq \cdots$, then
			$\varprojlim m(A_i) = m\left( \bigcap\limits_{i=1}^{\infty} A_i \right) + e$
			\newline provided $\exists N \in \N$ and $e \in \E(\T)$ such that for all $n \geq N$, $m(A_n) \in \T_e$ the isotropy monoid of $e$.
	\end{description}
\end{thm}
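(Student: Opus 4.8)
The plan is to derive all four properties from Countable Additivity of $m$ together with the order and cancellation structure provided by Theorems~\ref{bigtheorem} and~\ref{emtheo}; the first two are routine and only the continuity statements require real work. Throughout I will use that finite additivity of $m$ follows from Countable Additivity and the Null Identity (pad a finite disjoint family with copies of $\varnothing$ and collapse the tail), and that the countable sum on $\T$ is commutative, associative, and additive over sequences, that is, that $\T$ carries the cardinal-algebra structure the construction behind Theorem~\ref{bigtheorem} endows it with.

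For \textbf{Monotonicity}, if $A \subseteq B$ write $B = A \sqcup (B\setminus A)$, so finite additivity gives $m(B) = m(A) + m(B\setminus A)$ and hence $m(A) \preceq m(B)$ by the definition of the canonical preorder. For \textbf{Subadditivity}, disjointify in the usual way: with $B_1 = A_1$ and $B_n = A_n \setminus (A_1\cup\cdots\cup A_{n-1})$ the $B_n$ are disjoint, $\bigcup_n B_n = \bigcup_n A_n$, and $B_n \subseteq A_n$, so $m(\bigcup_n A_n) = \sum_n m(B_n)$ by Countable Additivity while $m(A_n) = m(B_n) + \gamma_n$ for suitable $\gamma_n \in \T$ by Monotonicity; summing termwise yields $\sum_n m(A_n) = m(\bigcup_n A_n) + \sum_n \gamma_n$, whence $m(\bigcup_n A_n) \preceq \sum_n m(A_n)$. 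For \textbf{Continuity from below}, decompose the increasing union into annuli $C_1 = A_1$, $C_n = A_n\setminus A_{n-1}$, so that $A_n = C_1\sqcup\cdots\sqcup C_n$ and Countable Additivity identifies $m(A_n)$ with the $n$-th partial sum of the series $\sum_n m(C_n)$, whose total is $m(\bigcup_n A_n)$. It then suffices to know that in $(\T,\preceq)$ the value of a countable sum is the least upper bound of its partial sums, equivalently that the chain $m(A_1)\preceq m(A_2)\preceq\cdots$ has $\varinjlim$ equal to that sum; this is a standard consequence of the cardinal-algebra postulates for $\T$ (every countable increasing sequence in a cardinal algebra has a supremum, realized by the associated series), which I will quote in the precise form needed.

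\textbf{Continuity from above} is the main obstacle, and the hypothesis $m(A_n)\in\T_e$ for $n\ge N$ is exactly the device supplying the cancellation the classical argument needs --- it is the abstract replacement for the finiteness hypothesis $\mu(A_1)<\infty$. The plan is to dualize the previous step. Put $c = m(\bigcap_i A_i)$. Since $\bigcap_i A_i\subseteq A_N$, finite additivity gives $m(A_N) = c + m(A_N\setminus\bigcap_i A_i)$, and since $A_N\setminus\bigcap_i A_i = \bigcup_{j\ge N}(A_N\setminus A_j)$ is an increasing union, Continuity from below rewrites the second summand as $\varinjlim_j m(A_N\setminus A_j)$. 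For each $j\ge N$, finite additivity on $A_j\subseteq A_N$ gives $m(A_N) = m(A_j) + m(A_N\setminus A_j)$; embedding $\T\hookrightarrow\Q$ via Theorem~\ref{emtheo} and using that $m(A_N)$ and $m(A_j)$ both lie in the maximal subgroup of $\Q$ at the idempotent $e$, we may solve $m(A_N\setminus A_j) = m(A_N) - m(A_j)$, which therefore also lies in $\T_e$. Substituting, and using that subtraction within the partially ordered group $\T_e$ carries the increasing limit over $j$ to the decreasing limit $\varprojlim_j$, we obtain $m(A_N) = c + \bigl(m(A_N) - \varprojlim_j m(A_j)\bigr)$ in $\Q$; cancelling $m(A_N)$ --- legitimate because it lies in the group $\T_e$ --- and solving for the limit gives $\varprojlim_j m(A_j) = c + e$. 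The identity $e$ of $\T_e$ appears precisely because every term of this equation other than $c$ lives in $\T_e$ while $c$ need not, so the equation can hold only after projecting to the scale $e$. The points I expect to demand the most care are: confirming that the relevant suprema and infima actually exist in $\T$ (again via the cardinal-algebra structure and the distributive lattice of idempotents of $\Q$ from Theorem~\ref{emtheo}); checking that $\varinjlim$ and $\varprojlim$ computed in $\T$ agree with those computed in the subgroup $\T_e$, since the two orders coincide there; and verifying that group subtraction commutes with these monotone limits.
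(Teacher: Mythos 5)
Your proposal is correct and follows essentially the same route as the paper: monotonicity and subadditivity by disjointification, continuity from below by telescoping into annuli and identifying the increasing limit with the countable sum (the paper packages this as Proposition~\ref{limcom}, that the quotient $\overline{\A}\to\T$ preserves $\omega$-directed colimits, but the underlying computation is the same partial-sum argument you give), and continuity from above by setting $B_j = A_N\setminus A_j$, applying continuity from below, and cancelling $m(A_N)$ inside the isotropy group $\Q_e$ --- which is exactly where the residual ``$+\,e$'' comes from, as you correctly explain.
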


It is important to note the change in the statement of continuity from above.  In the classical context, the caveat reads something like ``...provided $m(A_N) < \infty$ for some $N$." Here the condition is that the decreasing sequence be eventually in a \emph{particular scale of measurement} with the isotropy monoids corresponding to scales of measurement. The subtlety is washed out in classical theory because there are only two scales of measurement: the finite stuff and infinity.  So the condition of eventually being in a particular scale is truncated to being eventually less than infinity. This condition is naturally suggested by the proof which restates the value $m\left( \bigcap\limits_{i=1}^{\infty} A_i \right)$ as a difference.  Just as in $\overline{\R}$ (what's $\infty - \infty$?) differences are defined relative to a particular scale of the quantity space. So there is an obstruction to proving the theorem: we need to be able to take differences. Thus, we add a condition ensuring that said differences can be taken! In addition, there is an unexpected `$ + \hspace{1ex} e $'.  In the classical context, $e = 0$ so it does not appear.  If in addition, we were to require that the type of $ \bigcap\limits_{i=1}^{\infty} A_i $ also be in the isotropy monoid $\T_e$ then the more familiar result would hold by Lemma~\ref{idembtw}.  However, we see no reason to make the additional assumption here.

The last of our main results relates the collection of classical stationary measures, $\M(X; \overline{\R}^+)$ on a stationarily measurable space $(X,S,\A)$ to a certain collection of $\sigma$\emph{-homomorphisms} which are monoid homomorphisms that commute with countably infinite sums in addition to finite sums e.g. $f$ is a $\sigma$-homomorphism if and only if $f \big( \sum\limits_{j \in J} \alpha_j \big) = \sum\limits_{j \in J} f(\alpha_j)$ for countable indices $J$.

\begin{thm}\label{meascoor}
	$\M(X;\overline{\R}^+)$ is in one-to-one correspondence with the collection of $\sigma$-homomorphisms mapping the completed isotropy monoids $\overline{\T_e}$ into $\overline{\R}^+$.
\end{thm}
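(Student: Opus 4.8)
The plan is to factor every classical measure through Tarski measure $m\colon \A \to \T$ and then, via the Embedding Theorem, to read a $\sigma$-homomorphism out of $\T$ off of its restrictions to the isotropy pieces. So, \textbf{Step 1 ($\M(X;\overline{\R}^+)$ versus $\sigma$-homomorphisms out of $\T$):} first I would record the universal property implicit in Theorem~\ref{bigtheorem}, that the type monoid $\T = \T(X,S,\A)$ is generated, under countable sums, by $\{m(A) : A \in \A\}$ subject to precisely the relations $m(\varnothing) = 0$, $m\big(\bigsqcup_i A_i\big) = \sum_i m(A_i)$ for disjoint families, and $m(A) = m(s^{-1}A)$ for $s \in S$; equivalently, $m$ is the universal null-preserving, countably additive, $S$-stationary map out of $\A$. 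Given $\mu \in \M(X;\overline{\R}^+)$, two sets of equal type are joined by a finite chain of these defining relations, each of which $\mu$ respects by its own countable additivity and stationarity; hence $m(A) \mapsto \mu(A)$ is well defined and extends uniquely to a $\sigma$-homomorphism $\bar\mu\colon \T \to \overline{\R}^+$ with $\bar\mu\circ m = \mu$. Conversely $h \mapsto h\circ m$ sends a $\sigma$-homomorphism $h\colon \T \to \overline{\R}^+$ to an element of $\M(X;\overline{\R}^+)$ by Theorem~\ref{bigtheorem}(2). Since $m(\A)$ generates $\T$ under countable sums, $\mu \mapsto \bar\mu$ and $h \mapsto h\circ m$ are mutually inverse, so $\M(X;\overline{\R}^+) \cong \Hom_\sigma(\T, \overline{\R}^+)$.

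\textbf{Step 2 (reduction to the completed isotropy monoids):} because $\overline{\R}^+$ admits all countable directed suprema and $\sigma$-homomorphisms preserve them, each $h \in \Hom_\sigma(\T, \overline{\R}^+)$ extends uniquely along $\T \hookrightarrow \overline{\T}$ to its $\sigma$-completion, and restriction inverts this, so $\Hom_\sigma(\T, \overline{\R}^+) \cong \Hom_\sigma(\overline{\T}, \overline{\R}^+)$. Now I would bring in Theorem~\ref{emtheo}: the embedding $\T \hookrightarrow \Q$ presents $\Q$ as a distributive lattice of its isotropy groups, and correspondingly $\overline{\T}$ is assembled by gluing the completed isotropy monoids $\{\overline{\T_e} : e \in \E(\T)\}$ along the lattice $\E(\T)$, the gluing governed by the ``sum into the finest less sensitive scale'' maps described after Theorem~\ref{emtheo}. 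Consequently a $\sigma$-homomorphism out of $\overline{\T}$ amounts to exactly a family $(h_e\colon \overline{\T_e} \to \overline{\R}^+)_{e \in \E(\T)}$ of $\sigma$-homomorphisms compatible with those maps --- a ``$\sigma$-homomorphism mapping the completed isotropy monoids $\overline{\T_e}$ into $\overline{\R}^+$'' in the sense of the statement. Composing the bijections of Steps 1 and 2 yields the theorem.

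\textbf{Where the difficulty lies:} Step 1 is essentially bookkeeping once the universal property of $\T$ is made explicit. The hard part will be Step 2, namely the structural claim that $\overline{\T}$ is the gluing of the completed isotropy monoids over the lattice $\E(\T)$. This requires two things from the quantity-space picture of Theorem~\ref{emtheo}: a spanning statement --- every element of $\overline{\T}$ is produced from elements of individual isotropy monoids by countable sums and directed suprema --- and a relations statement --- the only identifications among such expressions are those forced by the bonding maps; the latter is where distributivity of the idempotent lattice is indispensable, and where one must also verify that $\sigma$-completion is well behaved (existence and uniqueness of $\overline{\T}$, of the extensions, and compatibility of countable sums with directed suprema). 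A minor point worth flagging but not genuinely hard: since $\E(\overline{\R}^+) = \{0, \infty\}$, compatibility forces each admissible family to be concentrated on the idempotents below some threshold in $\E(\T)$, which is precisely how the classical dichotomy between the finite scale and $\infty$ is recovered.
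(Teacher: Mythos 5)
Your Step 1 is sound and is essentially what the paper does: a classical stationary measure is invariant under equidecomposition, hence descends to types and extends over countable coproducts to a $\sigma$-homomorphism on $\T$ (the paper packages this inside Proposition~\ref{extmeas}, using Lemma~\ref{rep} to represent elements of $\overline{\A}$ as countable coproducts of sets in $\A$). The problem is Step 2, and you have correctly diagnosed it yourself: the entire content of the reduction to the $\overline{\T_e}$ rests on the claim that $\overline{\T}$ is the gluing of the completed isotropy monoids over the lattice $\E(\T)$, with only the identifications forced by the bonding maps, and you assert this rather than prove it. That spanning-and-relations statement is nowhere established in the paper either, and it is a substantially stronger structural theorem than what is actually needed; as written, your argument has a hole exactly where you say "the hard part will be."

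The paper routes around this entirely, and the detour you flag as "a minor point" is in fact the whole proof. By Corollary~\ref{idemsum2} every idempotent $f\in\E(\T)$ satisfies $f=\sum_{j\in J}f$ over a countable index set, so any $\sigma$-homomorphism into $\overline{\R}^+$ must send every idempotent to $0$ or $\infty$ (the only solutions of $x=\sum_j x$ there). One then takes $e$ to be the maximum idempotent sent to $0$ and checks directly that the measure factors as $\overline{\nu}\circ m_e$ through the single hierarchical measure $m_e:\A\to\overline{\T_e}$ (Proposition~\ref{extmeas}). So the correspondence in Theorem~\ref{meascoor} is not with compatible \emph{families} $(h_e)_{e\in\E(\T)}$ glued over the lattice, but with individual $\sigma$-homomorphisms out of individual completed scales $\overline{\T_e}$ --- one scale per measure, reflecting the paper's point that a classical measure can only see a single scale. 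To repair your argument, replace the gluing claim with the idempotent dichotomy: show your $\bar\mu\in\Hom_\sigma(\T,\overline{\R}^+)$ kills a maximal idempotent $e$ and sends everything not dominated appropriately to $\infty$, so that it is determined by, and determines, its restriction to $\overline{\T_e}$. That is a short argument from Corollary~\ref{idemsum2} and Lemma~\ref{idembtw}, and it makes no appeal to any colimit presentation of $\overline{\T}$.
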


The formalism within the previous theorem statement is laid out in Section~\ref{hierarchical} with this theorem coming as a consequence of Proposition~\ref{extmeas}.  However, the content is simple.  Unlike Tarski measure, classical measures can only describe a single scale of measurement. This is a consequence of the fact that $\overline{\R}^+$ has two idempotents $\{ 0, \infty \}$ and $\infty$'s associated isotropy monoid is trivial. Because of this, when there is a stationary classical measure $\mu$ on $(X,S,\A) \in \SM$, $\mu$ must specify a particular scale within the type monoid. This observation allows us to parametrize the space of stationary measures $\M(X; \overline{\R}^+)$ in terms of $\sigma$-homomorphisms out of the completed scales of measurement $\{ \overline{\T_e} : e \in \E(\T) \}$.  In particular, this serves as a formal algebraic connection between Tarski measure and classical measure theory.

\subsection{Discussion of Theorem~\ref{bigtheorem}} 
The proof of this Theorem relying chiefly on a somewhat lengthy construction, will be given in Section~\ref{typeconst}.  Here we comment on its contents and touch on the expectation that the type functor should be co-functorial.
 
The object map taking $(X,S,\A)$ to the collection of \emph{types} $\T(X,S,\A)$ is realized by first expanding the $\sigma$-algebra $\A$ to account for multiple and/or overlapping sets and then quotienting this expanded collection $\overline{\A}$ by an equivalence relation. In the parlance of Section~\ref{typedef}, $\T(X,S,\A)$ is given by the quotient:
\begin{equation}
	\T(X,S,\A) := \overline{\A} \big/  \eqsim_S
\end{equation}
The equivalence relation encodes exactly and only that collection of equations which a stationary measure must satisfy. Consequently, the type of a measurable set $A \in \A$ is the collection of all sets reachable by partitioning $A$ into at most countably many measurable pieces and acting on those pieces independently by $S$.

The co-functoriality of the morphism map also warrants comment. Recall that classical measures pushforward.  That is, given measured spaces $(X,\A,\nu)$,  $(Y,\B,\mu)$ and an $(\A,\B)$-measurable map $f:X \to Y$ the pushforward of $\nu$, $f_{\ast}\nu: \B \to \nu(\A) \subseteq [0,\infty]$ is defined by $(f_{\ast}\nu) := \nu \circ f^{-1} (B)$.
Thus, the pushforward measure assigns to each measurable subset of $Y$ a real number in the image of $\nu$. A natural question arises: Is there a map $f^{\ast}$ such that $\nu \circ f^{-1} = f^{\ast} \circ \mu$?\footnote{The existence of such a map is precisely the content of the commutivity property in part~\ref{propmorph} of Theorem~\ref{bigtheorem}.}  In other words, can we make the following diagram commute?
\begin{equation}\begin{tikzcd}
	  (X,\A,\nu)	\ar{d}{f}
	& \A			\ar{r}{\nu}
	& \nu(\A) 		\subseteq [0,\infty]
	\\
	  (Y,\B,\mu)
	& \B 			\ar{r}{\mu} \ar{u}{f^{-1}}
					\ar{ur}{f_{\ast} \nu}
	& \mu(\B)		\ar[dashed]{u}[swap]{f^{\ast}}
	\\
\end{tikzcd}\end{equation}

Intuitively, this is a question about whether or not the measurable function $f$ not only relates $\B$ to $\A$ but also induces a comparison between the measurements $\mu(B)$ and $\nu(A)$ compatible with the measure $\mu$.  Does the relation between sets $f^{-1}$ induce a relation between measures of sets in a `nice' fashion? Putting aside the meaning of such an $f^*$ we see quite trivially because measurable maps are defined in terms of their preimage that $f^{\ast}: \nu(\A) \leftarrow \mu(\B)$ should map \emph{in the opposite direction} as $f$. So for such a map we would expect the order of composition to reverse just as it does for preimages of compositions i.e. $(f \circ g) \longmapsto (f \circ g)^{\ast} = (g^{\ast} \circ f^{\ast})$. That is, should such a construction exist, without thinking, we expect it to be co-functorial.

\begin{ex}
	The Radon-Nikodym Theorem for classical measures provides us with a concrete example in which this diagram can be completed. Consider a measurable space $(X,\A)$ with measurable map $f: X \to X$ and a pair of $\sigma$-finite measures $\mu$ and $\nu$ on $(X,\A)$ where $\nu \ll \mu$ and $\nu(f^{-1}N) = 0$ whenever $N$ is $\mu$-null.  Then, using the Radon-Nikodym derivative $\frac{\operatorname{d}\!\nu}{\operatorname{d}\!\mu}$, we may define $f^{\ast}$ to be the monoid homomorphism given by sending $\mu(A) = \int\limits_A \, {\operatorname{d}\!\mu} \longmapsto \int\limits_{f^{-1}(A)} \frac{\operatorname{d}\!\nu}{\operatorname{d}\!\mu} \, \operatorname{d}\!\mu$ and observe that;
	\begin{equation}
		f_{\ast}\nu(A)
		=
		(\nu \circ f^{-1})(A)
		=
		\int\limits_{f^{-1}(A)} \frac{\operatorname{d}\!\nu}{\operatorname{d}\!\mu} \, \operatorname{d}\!\mu
		=
		(f^{\ast} \circ \mu)(A).
	\end{equation}
\end{ex}

\section{Construction of the Type co-Functor}\label{typeconst}
Within this Section we prove Theorem~\ref{bigtheorem} and in so doing, construct Tarski measure for an arbitrary stationarily measurable space. Section~\ref{background} consists of some motivating background using classical stationary measures. For reference, the reader can consider Lebesgue measure on $R^n$ which is stationary with respect to the Euclidean group or alternatively, counting measure on an countable set $I$ which is stationary under permutations $\rho \in Aut(I)$. The concept of equidecomposability is introduced in Section~\ref{equidecomp}. This is an equivalence relation on the collection of measurable sets that encodes necessary equalities of measure based on stationarity and countable additivity.  In Section~\ref{typedef}, equidecomposability is used to define the Tarski type monoid of a stationarily measurable space. Section~\ref{mormap} relates morphisms of stationarily measurable spaces to monoid morphisms between their associated type monoids. Finally, in Section~\ref{tarskimeas} we define Tarski measure and conclude the proof of Theorem~\ref{bigtheorem}.

\subsection{Background}\label{background}
Tarski introduced equidecomposability to encode the necessary equivalences in measure between sets together with the algebraic laws a measure must satisfy. It is foundational in the study of paradoxical decompositions such as the famous Banach-Tarski Theorem, investigations into the existence or non-existence of measures with certain properties, and in the theory of Amenable Groups.

The construction itself follows from a careful interpretation of the properties of a measure. Consider an $S$-stationary classical measure space $(X,S,\A,\mu)$ where $S$ is an inverse monoid.  The first observation one should make is that it is not uncommon to make implicit use of objects that are not in our $\sigma$-algebra.  In the identity: $\mu(P) + \mu(Q) = \mu(P \triangle Q) + 2\mu(P \cap Q)$ we exploit the ring structure of $\overline{\R}^+$ to encode, using a `2', a more elementary geometric fact; the left hand side of the equation describes two copies of the set $(P \cap Q)$.  Using coproducts we can encode the identity without appealing to multiplication.\footnote{The coproduct in $\Set$ is the disjoint union.}

\begin{equation}\label{symdiff}
	\mu(P) + \mu(Q) = \mu(P \triangle Q) + \mu\left( (P \cap Q) \coprod (P \cap Q) \right) = \mu\left( P \coprod Q \right)
\end{equation}
\\
Further, this approach matches the intuitionist view of having two `overlapping copies' of $(P \cap Q)$.  The problem, of course, is that $\A$ typically lacks coproducts.  For example, if $P$ is a non-empty measurable set in $\A$ then $P \coprod P \not\in \A$.  However, the lack of coproducts in $\A$ isn't much of an issue as we may view $\A$ as a subcategory of $\Set$ (which has coproducts) and so generate $\overline{\A}$ the minimal subcategory of $\Set$ containing the objects of $\A$ and having colimits of type $\catJ$ where $\catJ$ is a countable discrete category.\footnote{Tarski referred to $\overline{\A}$ as the \emph{disjunctive Algebra} associated with $\A$.} Thus, countable additivity may be expressed more simply as: ``$\mu\big( \coprod\limits_j P_j \big) = \sum\limits_j \mu(P_j)$ for $\{P_j\}_{j \in J} \subseteq \overline{\A} \,$".

\begin{lem}\label{rep}
	Any object $P \in \overline{\A}$ may be represented as $P = \coprod\limits_{j \in J} P_j$ with $P_j \in \A$ for all $j$ \emph{for some} countable index set $J$. Moreover, this representation is unique up to coproducts appearing within $\A$ e.g. measurable partitions.
\end{lem}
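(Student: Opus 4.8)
The plan is to treat \textbf{existence} and \textbf{uniqueness} separately, in each case exploiting the minimality built into the definition of $\overline{\A}$.

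For existence, I would let $\mathcal{D}$ be the full subcategory of $\Set$ whose objects are the sets of the form $\coprod_{j\in J}P_j$ with $J$ countable (allowing $J=\varnothing$ and $|J|=1$) and each $P_j\in\A$. Clearly $\mathcal{D}$ contains every object of $\A$, and $\mathcal{D}$ is closed under $\catJ$-colimits as computed in $\Set$: a countable coproduct $\coprod_{i\in I}D_i$ of objects $D_i=\coprod_{j\in J_i}P_{i,j}$ of $\mathcal{D}$ is canonically isomorphic to $\coprod_{(i,j)\in\bigsqcup_{i}J_i}P_{i,j}$, and $\bigsqcup_{i\in I}J_i$, being a countable union of countable sets, is countable. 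Hence $\mathcal{D}$ is a subcategory of $\Set$ containing the objects of $\A$ and having $\catJ$-colimits, so the minimality in the definition of $\overline{\A}$ forces $\overline{\A}\subseteq\mathcal{D}$; in particular every object $P\in\overline{\A}$ is such a countable coproduct, and discarding the indices with $P_j=\varnothing$ yields the asserted representation.

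For uniqueness, suppose $P=\coprod_{j\in J}P_j$ and $P=\coprod_{k\in K}Q_k$ are two such representations of a single object of $\overline{\A}$ (the two coproduct cocones possibly compared through an isomorphism of $\overline{\A}$). For each pair $(j,k)$ I would form the pullback $R_{jk}:=P_j\times_P Q_k$ of the two summand inclusions into $P$; concretely $R_{jk}$ is the set of elements of $P$ lying in both the $j$-th and the $k$-th summand. Then at most countably many $R_{jk}$ are nonempty, $P_j=\coprod_k R_{jk}$ and $Q_k=\coprod_j R_{jk}$, and $P=\coprod_{j,k}R_{jk}$ simultaneously refines both decompositions. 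Provided each $R_{jk}$ lies in $\A$, the family $\{R_{jk}\}$ is then a common refinement of the two representations ``by coproducts within $\A$'', i.e.\ by measurable partitions of the $P_j$ and of the $Q_k$, which is exactly the asserted uniqueness.

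The crux, and what I expect to be the \textbf{main obstacle}, is the measurability $R_{jk}\in\A$: this cannot follow from set theory alone, since an arbitrary subset of a measurable set need not be measurable. One must use that $R_{jk}$ is the fibre over the $k$-th summand of an $\overline{\A}$-morphism with measurable domain, namely the summand inclusion $\iota_j:P_j\hookrightarrow P=\coprod_k Q_k$ (or $\phi\circ\iota_j$ when an intervening isomorphism $\phi$ is present). I would therefore first establish an auxiliary lemma: \emph{for every morphism $f:A\to\coprod_k Q_k$ of $\overline{\A}$ with $A\in\A$, the family $\{f^{-1}(Q_k)\}_k$ is a measurable partition of $A$}, proved by structural induction over the generators of $\mathrm{mor}(\overline{\A})$ --- trivial for morphisms of $\A$, inherited under composition (refining a measurable partition of $A$ along the measurable pieces produced by the next morphism and using that $\A$ is a $\sigma$-algebra), and inherited under forming countable coproducts of morphisms. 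Applying this with $f=\iota_j$ gives $R_{jk}=\iota_j^{-1}(Q_k)\in\A$ and completes the uniqueness argument; everything else is bookkeeping with countable index sets and the universal property of coproducts in $\Set$.
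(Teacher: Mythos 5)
The paper states Lemma~\ref{rep} without proof, so there is no argument of record to compare yours against; it is treated as an immediate consequence of the definition of $\overline{\A}$. Your existence argument is exactly the one that definition is designed to make work: the class of countable coproducts of members of $\A$ contains $\A$ and is closed under $\catJ$-colimits because a countable disjoint union of countable index sets is countable, so minimality forces $\overline{\A}$ inside it. That half is complete and correct. For uniqueness, your common-refinement strategy via $R_{jk}=P_j\cap Q_k$ is the right one, and you are right to flag $R_{jk}\in\A$ as the point that does not follow from set theory alone. Note, though, that the force of this worry depends entirely on what the morphisms of $\overline{\A}$ are, which the paper leaves unspecified: if two representations of $P$ are compared through an \emph{arbitrary} bijection of $\Set$, uniqueness up to measurable partition is simply false (one could scramble points of $X$ at will), so the intended reading must be that $\overline{\A}$'s morphisms are generated by those of $\A$ together with the coproduct injections and the induced maps out of coproducts. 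Under that reading your auxiliary lemma (that $\{f^{-1}(Q_k)\}_k$ is a measurable partition of the domain for every generated morphism $f$) goes through by the structural induction you describe, using that $\A$ is closed under countable intersections; in the strictest reading, where the two cocones present the \emph{same} flattened object, each $R_{jk}$ is already a set of the form $P_j\cap Q_k$ with both factors in $\A$ and the induction is not even needed. So your proposal is sound and, in the uniqueness half, more careful than the paper, which silently assumes precisely the point you isolate. The only genuinely missing ingredient is an explicit statement of which morphisms $\overline{\A}$ is generated by, without which neither your proof nor the lemma itself is fully determinate.
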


This lemma justifies our laxity in treating the measure $\mu$'s domain ambiguously as either $\A$ or $\overline{\A}$ since, together with countable additivity, $\mu: \A \to \overline{\R}^+$ uniquely extends to an $S$-stationary measure $\mu: \overline{\A} \to \overline{\R}^+$ where $S$ acts on $\overline{\A}$ by mapping $s^{-1}(\coprod\limits_{j \in J} P_j) \mapsto \coprod\limits_{j \in J} s^{-1}P_j$.

So we have at our disposal, two measure preserving operations: (i) we may partition a measurable set into countably many measurable pieces via coproducts and (ii) we may act on measurable sets by transformations from $S$.  Used in conjunction, these two operations constitute a coarser invariant of the measure $\mu$ than $S$-stationarity alone because it allows one to act on the members of a coproduct independently of one another. For example, let $P \coprod Q \in \overline{\A}$ and $s,t \in S$ and consider the following figure.

\begin{figure}[h]
\begin{tikzpicture}
\draw[black,thick,fill=gray!20] (0,0)  circle (.5);
\draw[black,thick,fill=gray!20] (10,0) circle (.5);
\draw (0,1)   node {$P$};
\draw (10,-1) node {$Q$};

\draw[black,thick,fill=gray!80] (5,0) circle (.5) node[auto]{$\coprod$};
\draw (5,1) node {$s^{-1}P$};
\draw (5,-1) node {$t^{-1}Q$};

\draw (1,1)  edge[->,bend left] node[auto]{$s^{-1}$} (4,1) ;
\draw (9,-1) edge[->,bend left] node[auto]{$t^{-1}$} (6,-1);

\end{tikzpicture}
\caption{A two piece equidecomposition.}
\label{fig1}
\end{figure}
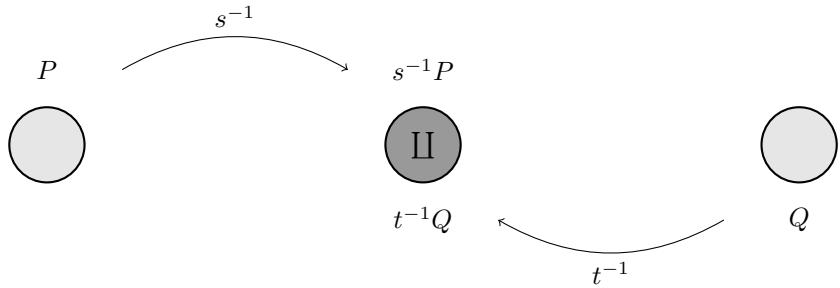
\noindent By equation~\eqref{symdiff} and the $S$-stationarity of $\mu$, the measure of the sets in Figure~\ref{fig1} is preserved.  Specifically, we have the equation:
\begin{equation}\label{equiex}
	\mu\left(P \coprod Q\right) = \mu(P) + \mu(Q) = \mu(s^{-1}P) + \mu(t^{-1}Q) = \mu\left(s^{-1}P \coprod t^{-1}Q \right).
\end{equation}
In this discussion of the measure $\mu$ we have scrupulously avoided using the fact that the target was $\overline{\R}^+$. This is fortunate, as modern measure theory includes projection-valued measures with values given by self-adjoint projections on a Hilbert space as well as positive operator valued measures neither of which are describable using real numbers alone.

\subsection{Equidecomposability}\label{equidecomp}
We observed in the previous section that an $S$-stationary classical measure $\mu$ is in fact invariant with respect to a coarser relation referred to as equidecomposability. The specific example given by equation~\eqref{equiex} demonstrates how equidecomposability differs from and extends $S$-similarity.  The general definition follows.
	
\begin{defn}
	Let $S$ be an inverse monoid, $\Omega$ be an $S$-set, and $\M \subseteq 2^\Omega$ be a collection of subsets closed under the induced action of $S$ (by pullbacks $s^{-1}: \M \to \M$). We say that $P,Q \in \M$ are $(S,\M)$\emph{-equidecomposable} and write $P \sim_S Q \iff$ there exist countable collections $(P_j), (Q_j) \subseteq \M$ with $P = \coprod\limits_j P_j$ and $Q = \coprod\limits_j Q_j$ and $(s_j), (t_j) \subseteq S$ such that $s^{-1}_j P_j = t^{-1}_j Q_j$.
\end{defn}
	
\begin{rem}\label{wlogidem}
	In the above we may, without loss of generality, assume for each $j$ that one of $s_j$ or $t_j$ is idempotent.
\end{rem}
\begin{proof}
	Observe that if $s^{-1}_j P_j = t^{-1}_j Q_j$ then $(s_j s^*_j)^{-1} P_j = (t_j s^*_j)^{-1} Q_j$.  Taking $s^*_j$ to be the weak inverse of $s_j$ in $S$, the result follows.\footnote{unique elements of an inverse semigroup $S$ satisfying the property that $ss^*s = s$ and $s^*ss^* = s^* $. See Appendix~\ref{sym}}
\end{proof}

\begin{prop}
	$(S,\M)$-equidecomposability is an equivalence relation on $\M$
\end{prop}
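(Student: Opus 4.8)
The plan is to check the three defining properties of an equivalence relation in turn. \textbf{Reflexivity} and \textbf{symmetry} are immediate from the shape of the definition, so I would dispatch them first. For reflexivity, use that $S$ is a \emph{monoid}: its identity $1$ acts as $\mathrm{id}_\Omega$, so taking the one-block decomposition $P=\coprod_{j\in\{1\}}P_j$ with $P_1=P$ and $s_1=t_1=1$ gives $1^{-1}P_1=1^{-1}P_1$, hence $P\sim_S P$. For symmetry, observe that the predicate ``there exist countable decompositions $P=\coprod_j P_j$, $Q=\coprod_j Q_j$ and $(s_j),(t_j)\subseteq S$ with $s_j^{-1}P_j=t_j^{-1}Q_j$'' is literally invariant under interchanging $(P,(P_j),(s_j))$ with $(Q,(Q_j),(t_j))$; so any witness for $P\sim_S Q$ is, verbatim, a witness for $Q\sim_S P$.

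\textbf{Transitivity} is where the work is. Suppose $P\sim_S Q$ is witnessed by $P=\coprod_{i\in I}P_i$, $Q=\coprod_{i\in I}Q_i$, $(s_i),(t_i)$ with $s_i^{-1}P_i=t_i^{-1}Q_i$, and $Q\sim_S R$ is witnessed by $Q=\coprod_{k\in K}Q_k'$, $R=\coprod_{k\in K}R_k$, $(u_k),(v_k)$ with $u_k^{-1}Q_k'=v_k^{-1}R_k$. First I would take the \emph{common refinement} of the two decompositions of the middle object $Q$, namely $Q_{ik}:=Q_i\cap Q_k'$, so that $Q=\coprod_{(i,k)\in I\times K}Q_{ik}$ with $I\times K$ still countable and $Q_i=\coprod_k Q_{ik}$, $Q_k'=\coprod_i Q_{ik}$. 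Next I would transport this refinement back onto $P$ and $R$. By Remark~\ref{wlogidem} I may assume in the first decomposition that the element $s_i$ on the $P$-side is idempotent, and in the second that the element $v_k$ on the $R$-side is idempotent; then $s_i^{-1}$ acts as restriction to the domain $D_{s_i}$ of $\Phi_{s_i}$, so $s_i^{-1}P_i=P_i\cap D_{s_i}$, and since preimages distribute over disjoint unions, $P_i\cap D_{s_i}=\coprod_k t_i^{-1}Q_{ik}$ becomes a genuine partition of $P_i$ once we adjoin the block $P_i\setminus D_{s_i}$, on which $s_i^{-1}$ is empty; symmetrically for $R$. This yields countable partitions of $P$ and of $R$ (the index set enlarged by the ``excess'' blocks) carrying on each block a preimage equation relating it to the corresponding block of the refined $Q$.

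Finally I would \emph{compose}. On a common cell one has $s_i^{-1}(P\text{-block})=t_i^{-1}Q_{ik}$ and $u_k^{-1}Q_{ik}=v_k^{-1}(R\text{-block})$; the shared set $Q_{ik}$ lets one substitute and, multiplying $s_i,t_i,u_k,v_k$ and their weak inverses appropriately in $S$ (using functoriality of the action, $(ab)^{-1}A=b^{-1}a^{-1}A$, together with the monoid identity), produce a single pair $a_{ik},b_{ik}\in S$ with $a_{ik}^{-1}(P\text{-block})=b_{ik}^{-1}(R\text{-block})$; for an ``excess'' block on either side the matching block on the other side is empty, and one pairs it with $(s_i,1)$ (resp.\ $(1,v_k)$), using that an idempotent's domain and range coincide so its inverse image of the excess block is empty. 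Reassembling over the (countable) enlarged index set exhibits data witnessing $P\sim_S R$.

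The step I expect to be the main obstacle is the transport-and-compose part of transitivity. In the group case each symmetry is a genuine bijection of $\Omega$, so one simply pulls the common refinement of $Q$ back along $g_i^{-1}$ and composes via group multiplication; here $s^{-1}$ is a preimage under a \emph{partial} map, and keeping the bookkeeping honest — which cells are non-trivial, how the blocks where the action is undefined get absorbed, and checking that the products of weak inverses formed in $S$ really realize the chained preimage equation on the nose — is the delicate point, with Remark~\ref{wlogidem} the tool that makes it go through. Everything else (reflexivity, symmetry, and the countability of $I\times K$) is routine.
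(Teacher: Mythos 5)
Your reflexivity and symmetry arguments are fine, and your overall architecture for transitivity --- refine the two decompositions of the middle object $Q$ by $Q_{ik}=Q_i\cap Q_k'$, transport the refinement out to $P$ and $R$, then compose --- is the same as the paper's. The genuine gap is in the step you yourself flag as delicate: the composition. You invoke Remark~\ref{wlogidem} to make the \emph{outer} elements $s_i$ and $v_k$ idempotent (so that the transport step looks like a restriction), but the normalization that actually makes the chaining work is the opposite one: make the two elements acting on the \emph{middle} object idempotent, i.e.\ $t_i$ and $u_k$. With that choice the two pullbacks of $Q_i\cap Q_k'$ supplied by the two equidecompositions are $(t_iu_k)^{-1}(Q_i\cap Q_k')$ and $(u_kt_i)^{-1}(Q_i\cap Q_k')$, and these coincide \emph{because idempotents of an inverse monoid commute}, giving at once the single chained equation $(s_iu_k)^{-1}P_{ik}=(v_kt_i)^{-1}R_{ik}$ with explicit witnesses $s_iu_k$ and $v_kt_i$. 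Your version of this step says only that one should multiply $s_i,t_i,u_k,v_k$ and their weak inverses ``appropriately,'' ``using functoriality of the action together with the monoid identity.'' Monoid algebra and functoriality alone cannot close this: after your reductions you are left comparing $t_i^{-1}Q_{ik}$ with $u_k^{-1}Q_{ik}$, two preimages of the same set under unrelated, generally non-commuting elements, and there is no formal product of $t_i$ and $u_k$ alone that equates them. The commutation of idempotents has to enter somewhere, and your write-up never identifies where.

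Your normalization is salvageable, but only by supplying exactly the identity you omit: taking $a_{ik}:=t_i^{*}u_ku_k^{*}$ and $b_{ik}:=u_k^{*}t_it_i^{*}$ one has $t_ia_{ik}=\r_{t_i}\r_{u_k}=\r_{u_k}\r_{t_i}=u_kb_{ik}$ (commutation of the range idempotents), whence $a_{ik}^{-1}\left(t_i^{-1}Q_{ik}\right)=b_{ik}^{-1}\left(u_k^{-1}Q_{ik}\right)$, which is the needed relation between the $P$- and $R$-blocks. Since producing such a pair $(a_{ik},b_{ik})$ \emph{is} the content of transitivity, leaving it at ``multiply appropriately'' is a real gap rather than a routine omission. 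A secondary caution: your reading of an idempotent $s_i$ as ``restriction to the domain $D_{s_i}$,'' so that $s_i^{-1}P_i=P_i\cap D_{s_i}$, presumes the action is by partial identities on subsets; the paper's conventions around its domain/codomain idempotents support this reading, but an arbitrary idempotent measurable self-map is a retraction, for which the preimage is not an intersection, so this should be stated as an appeal to the paper's standing convention rather than derived from idempotence alone.
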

	
\begin{proof}
	Reflexivity and symmetry of $S$-equidecomposability are immediate.  Transitivity however, is somewhat more involved. Suppose $P \sim_S Q \sim_S R$. Then there exist $P_i, Q_i, Q'_j, R_j \in \M$ along with $s_i, t_i, u_j, v_j \in S$ for $i,j \in J$ a countable index set, such that:
	$$ P = \coprod\limits_i P_i \hspace{40pt} \coprod\limits_i Q_i = Q = \coprod\limits_j Q'_j \hspace{40pt} R = \coprod\limits_j R_j $$
	and
	$$s^{-1}_i P_i = t^{-1}_i Q_i \hspace{40pt} u^{-1}_j Q'_j = v^{-1}_j R_j$$ 
	By Remark~\ref{wlogidem} we can assume that $t_i, u_j$ are idempotent for all $i$ and $j$. Consequently, as the idempotents of an inverse monoid commute:
	$$( t_i u_j )^{-1}(Q_i \cap Q'_j) = ( u_j t_i )^{-1}(Q_i \cap Q'_j)$$
	So we can partition $Q$ as $\coprod\limits_{i,j} (Q_i \cap Q'_j)$ and use that to relate partitions of $P$ and $R$.  Explicitly,
	$$\begin{array}{lclcl}
		  (s_i u_j)^{-1}P_{ij}
		& :=
		& (s_i u_j)^{-1} \Big( P_i \cap (s_i u_j) (t_i u_j)^{-1}(Q'_j) \Big)
		& 
		& 
		\\
		  
		& =
		& (s_i u_j)^{-1}(P_i) \cap (t_i u_j)^{-1}(Q'_j)
		&
		&
		\\
		  
		& =
		& (t_i u_j)^{-1}(Q_i \cap Q'_j)
		& 
		& 
		\\
		  
		& =
		& (u_j t_i)^{-1}(Q_i \cap Q'_j)
		& 
		& 
		\\
		  
		& =
		& (v_j t_i)^{-1}(R_j) \cap (u_j t_i)^{-1}(Q_i)
		\\
		  
		& =
		& (v_j t_i)^{-1} \Big( R_j \cap (v_j t_i) (u_j t_i)^{-1}(Q_i) \Big)
		& =:
		& (v_j t_i)^{-1}R_{ij}
		\\
	\end{array}$$
	Since $P = \coprod\limits_{i,j} P_{ij}$ and $R = \coprod\limits_{i,j} R_{ij}$ this is a realization of $P \sim_S R$.
\end{proof}

The following proposition summarizes the relationship between $(S,\A)$-equidecomposability and $(S,\overline{\A})$-equidecomposability.  Because in both instances the action is by $S$, we introduce the notation ``$\, \eqsim_S$" to distinguish $(S,\overline{\A})$-equidecomposability from $(S,\A)$-equidecomposability.

\begin{prop}
	Given $P, Q \in \A$ we have $P \sim_S Q \implies P \eqsim_S Q$
\end{prop}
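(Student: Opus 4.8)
The plan is to observe that any data witnessing $(S,\A)$-equidecomposability already witnesses $(S,\overline{\A})$-equidecomposability, so that no new partitioning and no new choice of symmetries are required; the only work is to reinterpret a measurable partition inside $\A$ as a coproduct inside $\overline{\A}$.

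First I would unpack the hypothesis. Since $\A$ has no coproducts, the statement $P \sim_S Q$ means concretely that there are countable families $(P_j), (Q_j) \subseteq \A$ which are measurable partitions of $P$ and $Q$ respectively (the $P_j$ pairwise disjoint with union $P$, likewise for the $Q_j$), together with $(s_j), (t_j) \subseteq S$ satisfying $s_j^{-1} P_j = t_j^{-1} Q_j$ for every $j$.

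Next I would promote this data to $\overline{\A}$. Because $\overline{\A}$ is by construction the closure of $\A$ under countable discrete colimits in $\Set$ and $\A \subseteq \overline{\A}$, a pairwise disjoint measurable family $(P_j)$ with union $P$ gives a canonical isomorphism $\coprod_j P_j \cong P$ in $\overline{\A}$: the comparison map out of the coproduct induced by the inclusions $P_j \hookrightarrow P$ is a bijection precisely because the $P_j$ are disjoint and cover $P$. This is exactly Lemma~\ref{rep} read in reverse, and it upgrades $P = \coprod_j P_j$ and $Q = \coprod_j Q_j$ to genuine coproduct decompositions in $\overline{\A}$. Since the action of $S$ on $\overline{\A}$ was defined by $s^{-1}(\coprod_j P_j) = \coprod_j s^{-1} P_j$ and on each single set $P_j \in \A$ it agrees with the original pullback, the equalities $s_j^{-1} P_j = t_j^{-1} Q_j$ carry over verbatim. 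These are precisely the conditions defining $P \eqsim_S Q$.

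The only real subtlety — hence the step I would write out most carefully — is the identification $\coprod_j P_j \cong P$ in $\overline{\A}$ for a measurable partition: one must check that the tagged disjoint union formed in $\Set$ names the same object of $\overline{\A}$ as the untagged measurable union, and that this identification is compatible with the induced $S$-action on $\overline{\A}$. Once that bookkeeping is in place the implication is immediate, the witness for $\sim_S$ being reused unchanged as a witness for $\eqsim_S$.
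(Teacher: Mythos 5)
Your argument is correct and is essentially the paper's own proof: the only countable coproducts available inside $\A$ are measurable partitions, these lie in $\overline{\A}$ by construction, and the witnessing data for $\sim_S$ is reused unchanged as a witness for $\eqsim_S$. You simply spell out the bookkeeping (the identification $\coprod_j P_j \cong P$ and the compatibility of the $S$-action) that the paper's two-line proof leaves implicit.
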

\begin{proof}
	The only countable coproducts in $\A$ are partitions into sets in $\A$ and all such coproducts are by definition in $\overline{\A}$. Hence, this follows directly from the definition of equidecomposability and of $\overline{\A}$.
\end{proof}
	
It is trivial to check that for a given $S$-stationary measure $\mu: \A \to \overline{\R}^+$; we have that $P \eqsim_S Q \implies \mu(P) = \mu(Q)$. Substantially, this is the content of equation~\eqref{equiex}. The converse however is not true of $(S,\overline{\A})$-equidecomposability for at least two reasons: (i) There may be some degeneracy in the $S$-action. and (ii) The $\sigma$-ideal of null sets prevents it. To illustrate, consider the following examples.
\begin{ex}\label{evenodd}
	Take $(X,S,\A) = (\Z,\Z^2,2^\Z)$ where $\Z^2$ acts on $\Z$ by:
	\begin{equation}
		(p,q)(n):=
		\left\{
		\begin{array}{ll}
			  n + 2p
			& : n \mbox{ even}
			\\
			  n + 2q
			& : n \mbox{ odd}
		\end{array}
		\right.
	\end{equation}
	Then the countably-additive $S$-stationary measure defined by taking $\mu(\{ 0 \}) := 1$ and $\mu(\{ 1 \}) := \frac{1}{2}$ has the property that $\mu(\{ 0 \}) = \mu(\{ -1,1 \})$ but clearly $\{ 0 \} \not\eqsim_S \{ -1,1 \}$ since the action preserves parity.
\end{ex}
	
\begin{ex}
	If for a given stationarily measured space $(X,S,\A,\mu)$ there exists a $\mu$-null set $N \neq \varnothing$ and $S$ does not contain an absorbing element, then $\varnothing \not\eqsim_S N$.
\end{ex}
	
By considering equidecomposability up to the difference of a null set, a partial resolution is available in the case of Haar measure on a locally compact topological group equipped with its Borel $\sigma$-algebra.  The following theorem given in \cite{wagon1993} summarizes:
	
\begin{thm}
	For the stationarily measured space $(X,S,\A,\mu):=(G,G,\B_G,\mu)$ where $\mu$ is (left) Haar measure. $\mu (P) = \mu(Q) \iff \exists N,N' \in \mathcal{N}$ such that $(P \setminus N) \eqsim_G (Q \setminus N')$ where $\mathcal{N}$ is the $\sigma$-ideal of $\mu$-null sets.
\end{thm}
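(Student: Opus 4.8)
The plan is to treat the two implications separately: the forward direction follows immediately from facts already established, and the reverse direction rests on a single overlap lemma together with a greedy exhaustion.

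($\Leftarrow$) Suppose $(P\setminus N)\eqsim_G(Q\setminus N')$ with $N,N'\in\mathcal N$. Since $(G,\overline{\B_G})$-equidecomposability forces equality of $\mu$ — essentially the content of~\eqref{equiex} — we get $\mu(P\setminus N)=\mu(Q\setminus N')$, and as $N,N'$ are $\mu$-null this gives $\mu(P)=\mu(P\setminus N)=\mu(Q\setminus N')=\mu(Q)$.

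($\Rightarrow$) Assume $\mu(P)=\mu(Q)$. First I would establish the \emph{overlap lemma}: if $A,B\in\B_G$ have $\mu(A),\mu(B)>0$, then $\mu(gA\cap B)>0$ for all $g$ in a set of positive measure. By inner regularity of Haar measure I may shrink $A,B$ to $\sigma$-compact sets of positive measure, so that Tonelli applies to $\int_G\mu(gA\cap B)\,d\mu(g)=\int_B\mu(xA^{-1})\,d\mu(x)=\mu(A^{-1})\,\mu(B)$; this is positive because $E\mapsto\mu(E^{-1})$ is a right Haar measure, hence positive on sets of positive left Haar measure. Assume next that $\mu(P)=\mu(Q)<\infty$. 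I then build disjoint $P_1,P_2,\dots\subseteq P$ and $g_1,g_2,\dots\in G$ with $Q_n:=g_n^{-1}P_n$ disjoint subsets of $Q$: at stage $n$, with $P^{(n)}:=P\setminus\bigcup_{i<n}P_i$ and $Q^{(n)}:=Q\setminus\bigcup_{i<n}Q_i$, stop if $\mu(P^{(n)})=0$; otherwise $\mu(Q^{(n)})=\mu(Q)-\sum_{i<n}\mu(P_i)\ge\mu(P^{(n)})>0$, so by the lemma $s_n:=\sup_g\mu(gQ^{(n)}\cap P^{(n)})>0$, and I choose $g_n$ with $\mu(g_nQ^{(n)}\cap P^{(n)})\ge s_n/2$, putting $P_n:=g_nQ^{(n)}\cap P^{(n)}$. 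As $\sum_n\mu(P_n)\le\mu(P)<\infty$ we have $\mu(P_n)\to0$, hence $s_n\to0$, so the terminal remainders satisfy $\sup_g\mu(gQ^{(\infty)}\cap P^{(\infty)})=0$; the lemma then forces $\mu(P^{(\infty)})\mu(Q^{(\infty)})=0$, and since $\mu(Q^{(\infty)})\ge\mu(P^{(\infty)})$ this yields $\mu(P^{(\infty)})=0$. Set $N:=P^{(\infty)}$; since $\sum_n\mu(Q_n)=\sum_n\mu(P_n)=\mu(P)=\mu(Q)$, also $N':=Q^{(\infty)}$ is null, and the decompositions $P\setminus N=\coprod_n P_n$, $Q\setminus N'=\coprod_n Q_n$ with $g_n^{-1}P_n=Q_n$ realize $(P\setminus N)\eqsim_G(Q\setminus N')$. (Run with merely $\mu(P)\le\mu(Q)$, the same exhaustion gives $P\setminus N\eqsim_G Q_0$ for some $Q_0\subseteq Q$; combining this with its mirror image via the Banach--Schr\"oder--Bernstein theorem for equidecomposability — valid modulo $\mathcal N$, which is a $G$-invariant $\sigma$-ideal — recovers the equal-measure conclusion too.)

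The main obstacle is the infinite case $\mu(P)=\mu(Q)=\infty$, particularly for $G$ not $\sigma$-compact: there $\mu(Q^{(n)})=\mu(Q)-\sum_{i<n}\mu(P_i)$ degenerates to $\infty-\infty$ and the terminal-remainder argument collapses. I would repair this by splitting $P$ and $Q$ into countably many finite-positive-measure blocks, running a back-and-forth between blocks, and finishing with Banach--Schr\"oder--Bernstein. The genuinely technical point is that every finite-measure set lies, modulo $\mathcal N$, inside the $\sigma$-compact open subgroup $H=\bigcup_n(K\cup K^{-1})^n$ generated by a compact neighbourhood $K$ of the identity, so one must organize the decompositions along the countably many cosets of $H$ that $P$ and $Q$ meet and verify that the resulting cosetwise translations assemble into an honest $(G,\overline{\B_G})$-equidecomposition. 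Carrying out that coset bookkeeping — rather than any isolated hard estimate — is where the real work lies.
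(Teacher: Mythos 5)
The paper offers no proof of this theorem to compare against: it is imported verbatim from Wagon~\cite{wagon1993} as background, so your argument has to stand on its own. On that basis, your ($\Leftarrow$) direction is fine, your overlap lemma is correct (the Tonelli computation $\int_G\mu(gA\cap B)\,d\mu(g)=\mu(A^{-1})\,\mu(B)$ and the appeal to mutual absolute continuity of left and right Haar measure are exactly the right ingredients), and the greedy exhaustion in the case $\mu(P)=\mu(Q)<\infty$ is the standard Banach-style argument and goes through: $s_n$ is non-increasing, $s_n\le 2\mu(P_n)\to 0$, and the contrapositive of the overlap lemma kills both terminal remainders since their measures are equal and finite. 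I would accept that as a complete proof of the finite-measure case.

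The infinite-measure case, however, is a genuine gap and not merely ``coset bookkeeping.'' Countable equidecomposability modulo $\mathcal{N}$ preserves $\sigma$-finiteness: if $P\setminus N=\coprod_j P_j$ and each $P_j$ is carried by a single group element into the $\sigma$-finite set $Q\setminus N'$, then each $P_j$, and hence $P\setminus N$, is $\sigma$-finite. So for a non-$\sigma$-compact group the statement as printed is simply false: take $G=\R\times D$ with $D$ uncountable discrete, $P=G$, and $Q=\R\times\{e\}$; both have infinite Haar measure, but $P$ remains non-$\sigma$-finite after deleting any null set while $Q$ is $\sigma$-finite, so no $N,N'$ can exist. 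Your proposed repair --- splitting into countably many finite-measure blocks --- is available precisely when $P$ and $Q$ are $\sigma$-finite, and in that case it does finish the proof (carve both sets into countably many pieces of equal finite measure, which nonatomicity of Haar measure permits in the nondiscrete case, pair the pieces, apply your finite case to each pair, and take countable unions of the resulting decompositions and null sets; the discrete $\sigma$-finite case is a bijection of countable sets). Note also the small slip that a finite-measure set lies modulo $\mathcal{N}$ in \emph{countably many cosets} of the open $\sigma$-compact subgroup $H$, not in $H$ itself, though your subsequent discussion treats it that way. The statement your argument actually establishes, and the one that should be recorded, is: for $\sigma$-finite $P,Q\in\B_G$ (hence for all Borel $P,Q$ when $G$ is $\sigma$-compact), $\mu(P)=\mu(Q)$ if and only if $(P\setminus N)\eqsim_G(Q\setminus N')$ for some $N,N'\in\mathcal{N}$.
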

	
As in equation \eqref{symdiff}, the extended framework using $\overline{\A}$ allows one to make the identification of ``two copies of $P$" with $P \coprod P$.  Together with the notion of $(S,\overline{\A})$-equidecomposability, this is a formalization of the historical studies of paradoxical decompositions and so-called ``paradoxes of measure" \cite{wagon1993}.  A simple example of this idea dating back to Galileo namely, that: \emph{The set of whole numbers is equinumerous with two copies of itself}, translates in this language to:
\begin{rem}
	For $(X,S,\A):=(\Z,\Z,2^\Z)$, we have the relationship $\Z \eqsim_S (\Z \coprod \Z)$.
\end{rem}
Likewise, viewed as a measurable object in $(\R,E,\B_\R)$ where $E$ is the Euclidean group, we have again that $\Z \eqsim_E (\Z \coprod \Z)$. An interesting feature of classical measure theory is that in the first case, this equivalence serves to prove that an invariant measure must give the object $\Z$ infinite measure whereas in the latter case, it is used (at least with respect to $\sigma$-finite measures) to prove that the object $\Z$ must be null.  At the core of both of these assertions is the idea that when the target of our measures is $\overline{\R}^+$, any set which is equidecomposable with two copies of itself must have measure $0$ or $\infty$ because these are the only idempotents in $\overline{\R}^+$.  That is to say, the conclusion is not intrinsic; it is a byproduct of our assumptions on the model for a measurement.
	
\subsection{The Tarski Type Monoid}\label{typedef}
From Lemma~\ref{rep} and the preceding section we have the observation that $S$-stationary measures of $\A$ are in one to one correspondence with $S$-stationary measures of $\overline{\A}$ and the values of such measures are invariant on the $(S,\overline{\A})$-equidecomposability classes of $\overline{\A}$. Here we establish that the quotient $\overline{\A} \big/ \eqsim_S$ inherently carries the structure of a partially ordered commutative monoid.
	
\begin{defn}
	The \emph{type monoid} of a stationarily measurable space $(X,S,\A)$ denoted $\T(X,S,\A)$ or in clear contexts, simply $\T$, is given by the quotient:
	\begin{equation}
		\T(X,S,\A) := \overline{\A} \big/ \eqsim_S = \{ [P] : P \in \overline{\A} \}
	\end{equation}
	together with the binary operation ``+" defined:
	\begin{equation}
		[P] + [Q] := \big[ P \textstyle\coprod Q \big]
	\end{equation}
\end{defn}
	
Throughout, we refer to these equivalence classes as \emph{types}, members of a given equivalence class as \emph{representatives}, and equidecompositions demonstrating relationships between types as \emph{realizations}.
\begin{prop}\label{typemon}
	The type monoid $(\T,+)$ of a stationarily measurable space is a well-defined commutative monoid with identity $0:= [\varnothing]$.
\end{prop}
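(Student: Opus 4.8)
The plan is to verify, in turn, that (i) the operation $+$ is well defined on $\eqsim_S$-classes, (ii) it is commutative, (iii) it is associative, and (iv) $[\varnothing]$ is a two-sided identity. It is convenient to fix a concrete realization of $\overline{\A}$ --- say as a collection of subsets of $X \times \N$ closed under the action $s^{-1}\bigl(\coprod_j P_j\bigr) = \coprod_j s^{-1}P_j$ --- so that $\eqsim_S$ is literally $(S,\overline{\A})$-equidecomposability, and to use throughout that $S$ is a monoid, so that $1 \in S$ acts on $\overline{\A}$ with $1^{-1}R = R$ for every $R$.

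The crux is well-definedness, i.e.\ that $\eqsim_S$ is a congruence for $\coprod$: if $P \eqsim_S P'$ and $Q \eqsim_S Q'$, then $P \coprod Q \eqsim_S P' \coprod Q'$. Fix realizations --- countable families $(P_j)_{j \in J}, (P'_j)_{j \in J} \subseteq \overline{\A}$ and $(s_j), (t_j) \subseteq S$ with $P = \coprod_j P_j$, $P' = \coprod_j P'_j$, and $s_j^{-1}P_j = t_j^{-1}P'_j$; and similarly $(Q_k)_{k \in K}, (Q'_k)_{k \in K}$, $(u_k), (v_k)$ witnessing $Q \eqsim_S Q'$. Since coproducts in $\Set$ are associative, $P \coprod Q = \coprod_{\ell \in J \sqcup K} R_\ell$ with $R_\ell := P_\ell$ for $\ell \in J$ and $R_\ell := Q_\ell$ for $\ell \in K$ (and likewise $P' \coprod Q' = \coprod_{\ell} R'_\ell$), and $J \sqcup K$ is again countable. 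Transporting the $P$-pieces by $(s_\ell,t_\ell)$ and the $Q$-pieces by $(u_\ell,v_\ell)$ relates $R_\ell$ to $R'_\ell$ for every $\ell$, which realizes $P \coprod Q \eqsim_S P' \coprod Q'$. The only subtlety is that the pieces $\{P_j\} \cup \{Q_k\}$ must be pairwise disjoint inside $P \coprod Q$; this is automatic, since $P$ and $Q$ occupy disjoint summands of the coproduct, and the passage down to measurable constituents is exactly Lemma~\ref{rep}.

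The remaining axioms are formal, reflecting that $+$ is nothing but the binary coproduct of $\Set$ read through the quotient. Commutativity: partitioning both $P \coprod Q$ and $Q \coprod P$ into their two summands $\{P,Q\}$ and transporting $P \mapsto P$, $Q \mapsto Q$ by $1 \in S$ realizes $P \coprod Q \eqsim_S Q \coprod P$, so $[P]+[Q] = [Q]+[P]$. Associativity: partitioning $(P \coprod Q) \coprod R$ and $P \coprod (Q \coprod R)$ into $\{P,Q,R\}$ and again transporting by $1$ gives $([P]+[Q])+[R] = [P]+([Q]+[R])$. Identity: since $\varnothing \in \A \subseteq \overline{\A}$ and $P \coprod \varnothing$ is canonically $P$, partitioning into $\{P\}$ on both sides and using $1$ yields $[P]+[\varnothing] = [P]$, and commutativity supplies the other equality; thus $[\varnothing] = 0$ is a two-sided identity.

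I expect the sole genuine obstacle to be the congruence step (i): the argument itself is short, but one must be careful with the bookkeeping of countable coproducts at the level of $\overline{\A}$ and with maintaining disjointness of the constituent measurable pieces, both of which are controlled by Lemma~\ref{rep}. Commutativity, associativity, and the unit law then follow mechanically from the symmetric monoidal structure of $(\Set, \coprod, \varnothing)$ restricted to $\overline{\A}$.
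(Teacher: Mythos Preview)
Your proposal is correct and follows essentially the same route as the paper: the paper also reduces the question to showing that $\eqsim_S$ is a congruence for $\coprod$ (it checks one variable at a time, $P \in [R] \Rightarrow [P]+[Q]=[R]+[Q]$, whereas you do both at once) and then observes $P \coprod \varnothing = P$ for the identity. You are simply more explicit than the paper about commutativity and associativity, which the paper leaves implicit in the coproduct structure of $\Set$.
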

\begin{proof}
	It suffices to demonstrate that if $P \in [R]$, then $[P]+[Q] = [R]+[Q]$. By definition, $[P]+[Q] = [P \coprod Q]$ and $[R]+[Q] = [R \coprod Q]$. Observe:
	\begin{equation}
			\begin{array}{clclclclc}
				[P \coprod Q]
				& =
				& \Big[ \big( \coprod\limits_{j \in J} P_j \big) \coprod Q \Big]
				& =
				& \Big[ \big( \coprod\limits_{j \in J} s^{-1}_j P_j \big) \coprod Q \Big]
				\\
				
				&
				&
				& =
				& \Big[ \big( \coprod\limits_{j \in J} t^{-1}_j R_j \big) \coprod Q \Big]
				& = 
				& \Big[ \big( \coprod\limits_{j \in J} R_j \big) \coprod Q \Big]
				& =
				&[R \coprod Q]
				\\
			\end{array}
	\end{equation}
	for a realization of $P \eqsim_S R$ via $P = \coprod\limits_{j \in J} P_j$, $R = \coprod\limits_{j \in J} R_j$ with $s^{-1}_j P_j = t^{-1}_j R_j$. Finally, observe that $[P] + [\varnothing] = [P]$ for all types $[P]$ since $P \coprod \varnothing = P$ in $\overline{\A}$.
\end{proof}
	
That the canonical preorder on commutative monoids ($\alpha \leq \beta \iff \exists \gamma$ such that $\alpha+\gamma=\beta$) is actually, a partial ordering on $\T$ follows from a slight modification of Banach's Schr\"{o}der-Bernstein Theorem given in \cite{wagon1993} and originally \cite{banach1924}. However, we'll need to establish a few facts before proving the theorem.

First, observe that for all $s \in S$ we have, $\d(x) = s^*(y) \iff s(x) = \r(y)$ where $\d = \d_s := s^*s$ is the \emph{domain idempotent of} $s$ and $\r = \r_s:= s s^*$ is the \emph{codomain idempotent of} $s$ are in $\E(S)$ the meet semilattice of idempotents of $S$.  Consequently, $s$ restricts to a bijection between $\d X$ and $\r X$ with inverse function $s^*$ and we have the following:
	
\begin{equation}\label{weakinv}\begin{array}{ccc}

	\begin{tikzcd}
		  X		\ar{dr}{s}
		  		\ar{d}{\d}
		& 
		\\
		  \d X 	\ar[yshift = +0.5ex]{r}{s}
		& \r X 	\ar[yshift = -0.5ex]{l}{s^*}
		\\
		& X		\ar{ul}{s^*}
				\ar{u}{\r}
		\\
	\end{tikzcd}
	
& \hspace{10ex} &

	\begin{array}{lcl}
	  s^{-1}(A)
	& = 
	& s^{-1}(A \cap \r X) \sqcup s^{-1}(A \setminus \r X)
	\\
	  
	& =
	& s^{-1}(A \cap \r X)
	\\
	  
	& =
	& s^*(A \cap \r X)
	\end{array}
\end{array}\end{equation}
	
\begin{rem}\label{bi}
	If $p = q \in \T$ then for any $P, Q \in \overline{\A}$ realizing the equality, we may restrict to a bijective realization.
\end{rem}
\begin{proof}
	This follows from the special case where $[P_0] = p = q = [Q_0]$ via $s^{-1}P_0 = t^{-1}Q_0$.  Here we may just define $P := (P_0 \cap \r_sX)$ and $Q := (Q_0 \cap \r_tX)$ and observe that $P_0 \eqsim P \eqsim Q \eqsim Q_0$ with the middle relation achieved by the bijection $t^{-1}s^*: P \to Q$. In general, just apply this same construction to each of the indices of a realization of $p = q$.
\end{proof}

\begin{lem}\label{consist}
	For members of $\overline{\A}$ the following two consistency properties with respect to equidecomposability hold:
	\begin{description}
		\item[refining:]   If $P \eqsim Q$ via bijection $\tau: P \to 
		Q$ then $R \sim \tau R$ whenever $R \subseteq P$ in $\overline{\A}$.
		
		\item[additive:] $P_1 \eqsim Q_1$ and $P_2 \eqsim Q_2 \implies P_1 \coprod P_2 \eqsim Q_1 \coprod Q_2$
	\end{description} 
\end{lem}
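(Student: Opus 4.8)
The plan is to verify both consistency properties by manipulating realizations at the level of their countable index sets, reducing everything to the piecewise combinatorics of weak inverses recorded in Remark~\ref{bi} and diagram~\eqref{weakinv}. The \textbf{additive} property is essentially bookkeeping: given realizations $P_i=\coprod_{j\in J_i}P_{ij}$, $Q_i=\coprod_{j\in J_i}Q_{ij}$ with $s_{ij}^{-1}P_{ij}=t_{ij}^{-1}Q_{ij}$ for $i=1,2$ over countable index sets $J_1,J_2$, the disjoint union $J:=J_1\sqcup J_2$ is again countable, and reindexing gives $P_1\coprod P_2=\coprod_{j\in J}P_j'$ and $Q_1\coprod Q_2=\coprod_{j\in J}Q_j'$ with the same elements of $S$ realizing $P_1\coprod P_2\eqsim_S Q_1\coprod Q_2$. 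I will spend essentially no effort on this half.

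The substance is the \textbf{refining} property. First I would unwind what a bijective realization of $P\eqsim_S Q$ amounts to: by Remark~\ref{bi} we may take $P=\coprod_j P_j$, $Q=\coprod_j Q_j$ and $s_j,t_j\in S$ with $s_j^{-1}P_j=t_j^{-1}Q_j=:D_j$, arranged (by passing from $P_j$ to $P_j\cap\r_{s_j}X$ and from $Q_j$ to $Q_j\cap\r_{t_j}X$, which by Remark~\ref{bi} changes neither type) so that $s_j^*$ and $t_j^*$ restrict to bijections $P_j\to D_j$ and $Q_j\to D_j$, with $\tau|_{P_j}=\tau_j:=(t_j^*|_{Q_j})^{-1}\circ(s_j^*|_{P_j})\colon P_j\to Q_j$. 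Next, given $R\subseteq P$ in $\overline{\A}$, I would use the uniqueness clause of Lemma~\ref{rep} to pass to a common refinement in which the countable decomposition of $P$ witnessing $R\subseteq P$ refines the one above, so that $R=\coprod_j R_j$ with $R_j:=R\cap P_j\subseteq P_j$ and $R_j\in\A$. Because $R_j\subseteq P_j\subseteq\r_{s_j}X$, equation~\eqref{weakinv} gives $s_j^{-1}R_j=s_j^*(R_j)\subseteq D_j$; and by the very definition of $\tau_j$ this set equals $t_j^*(\tau_j R_j)=t_j^{-1}(\tau_j R_j)$. Hence $s_j^{-1}R_j=t_j^{-1}(\tau_j R_j)$ for every $j$, and since $R=\coprod_j R_j$ while $\tau R=\coprod_j \tau_j R_j$, this is precisely a realization of $R\eqsim_S\tau R$. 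Along the way one checks that each $\tau_j R_j$ is measurable, hence $\tau R\in\overline{\A}$: it equals $(\Phi_{t_j^*})^{-1}(s_j^{-1}R_j)\cap\r_{t_j}X$, which is measurable because the weak-inverse maps $\Phi_{t_j^*}$ are measurable.

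I expect the main obstacle to be not any single estimate but the compatibility bookkeeping in the refining step --- arranging that the decomposition of $P$ carried by ``$R\subseteq P$ in $\overline{\A}$'' can be taken to refine the one appearing in a chosen bijective realization of $P\eqsim_S Q$. This is exactly what the ``unique up to measurable partitions'' clause of Lemma~\ref{rep} supplies, together with the observation that a realization may always be refined: splitting a piece $P_j$ into $\coprod_l P_{jl}$ splits $D_j$ into $\coprod_l s_j^{-1}P_{jl}$ and $Q_j$ correspondingly while preserving all the defining equations. Once the indices are aligned the rest is the routine identity above. These two properties --- restriction of bijective realizations to subobjects, and closure of equidecomposability under coproducts --- are then exactly what is needed to run Banach's Schröder--Bernstein argument and conclude antisymmetry of the canonical preorder on $\T$.
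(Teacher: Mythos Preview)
Your proposal is correct and is essentially a fully fleshed-out version of the paper's own proof, which is extremely terse: it simply says that refining ``follows from the measurability of the $S$-action on $X$'' and that additivity ``follows trivially from the definition of equidecomposability.'' Your argument spells out exactly the bookkeeping (aligning decompositions via Lemma~\ref{rep}, reading off $s_j^{-1}R_j=t_j^{-1}(\tau_j R_j)$ from~\eqref{weakinv}, and checking measurability of $\tau_j R_j$) that the paper leaves implicit, so there is no substantive difference in approach.
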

\begin{proof}
	That $(S,\overline{\A})$-equidecomposability is refining follows from the measurability of the $S$-action on $X$. The additivity follows trivially from the definition of equidecomposability.
\end{proof}

\begin{thm}[\textbf{Schr\"{o}der-Bernstein Property}]\label{bsb}
	The canonical preorder on $\T = \overline{\A} \big/ \eqsim_S$ given by $\alpha \preceq \beta \iff \exists \gamma$ such that $\alpha + \gamma = \beta$ is a partial order.
\end{thm}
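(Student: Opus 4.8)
The plan is to observe that the canonical preorder $\preceq$ is automatically reflexive (take $\gamma = 0$) and transitive (if $\alpha + \gamma = \beta$ and $\beta + \delta = \zeta$ then $\alpha + (\gamma + \delta) = \zeta$ by associativity of $+$ in $\T$), so the entire content of the theorem is the antisymmetry statement: if $\alpha \preceq \beta$ and $\beta \preceq \alpha$, then $\alpha = \beta$. I would prove this by transporting Banach's proof of the Schr\"oder--Bernstein theorem into the category $\overline{\A}$, using Remark~\ref{bi} and Lemma~\ref{consist} to keep everything inside $\overline{\A}$ and to keep all maps realized piecewise by $S$.

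First I would unwind the hypotheses. Fix representatives $\alpha = [A]$ and $\beta = [B]$ with $A,B \in \overline{\A}$, and pick $\gamma = [C]$, $\delta = [D]$ with $A \coprod C \eqsim_S B$ and $B \coprod D \eqsim_S A$. Applying Remark~\ref{bi} to each of these equidecompositions I may assume both realizations are bijective; restricting the first to the summand $A$ and the second to the summand $B$, and invoking the \emph{refining} clause of Lemma~\ref{consist}, I obtain subobjects $B_1 \subseteq B$ and $A_1 \subseteq A$ in $\overline{\A}$ together with $(S,\overline{\A})$-equidecomposability bijections $\tau\colon A \to B_1$ and $\sigma\colon B \to A_1$. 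Since $A_1$ and $B_1$ are genuine subobjects, their relative complements $A \setminus A_1$ and $B \setminus B_1$ are again objects of $\overline{\A}$ by Lemma~\ref{rep}.

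Next I would run the back-and-forth. Set $C_0 := A \setminus A_1$ and, inductively, $C_{n+1} := \sigma(\tau(C_n))$, and put $C := \coprod_{n \geq 0} C_n$. The things to check are: (i) each $C_n \in \overline{\A}$, by iterating the refining property; (ii) the $C_n$ are pairwise disjoint — because $\sigma \circ \tau$ is injective and $C_0$ is disjoint from $A_1$, which contains every $C_n$ with $n \geq 1$, so applying $(\sigma\circ\tau)^{-1}$ repeatedly reduces any overlap to an overlap of $C_0$ with some $C_k \subseteq A_1$ — so that $C$ is a genuine coproduct and hence $C, A \setminus C \in \overline{\A}$; and (iii) $\sigma$ carries $B \setminus \tau(C)$ bijectively onto $A \setminus C$. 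Then define $h\colon A \to B$ by $h|_{C} := \tau|_{C}$ and $h|_{A \setminus C} := (\sigma|_{B \setminus \tau(C)})^{-1}$. The usual set-theoretic computation shows $h$ is a bijection, and by the \emph{refining} and \emph{additive} clauses of Lemma~\ref{consist} the restriction of $\tau$ to each piece of $C = \coprod_n C_n$ and the restriction of $\sigma^{-1}$ to $A \setminus C$ are still realized piecewise by elements of $S$; assembling these countably many pieces gives a single realization of $A \eqsim_S B$, whence $\alpha = [A] = [B] = \beta$.

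The main obstacle is bookkeeping rather than conceptual: one must be scrupulous that \emph{every} set produced by the recursion ($C_n$, $C$, $A \setminus C$, and the images and preimages $\tau(C)$, $\sigma^{-1}(\cdots)$) is an object of $\overline{\A}$, and — more importantly — that the resulting $h$ is not merely a bijection of underlying sets but an honest $(S,\overline{\A})$-equidecomposition, i.e. that the ``partition into countably many measurable pieces and act on each independently by $S$'' structure survives the back-and-forth construction. This is precisely what Remark~\ref{bi} (reduction to bijective realizations) and the two consistency properties of Lemma~\ref{consist} are engineered to supply; the only extra care needed beyond the classical argument is the disjointness observation in step (ii), which is what lets us conclude $C \in \overline{\A}$ in the first place.
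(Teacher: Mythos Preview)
Your proposal is correct and follows essentially the same route as the paper: both reduce to antisymmetry and run Banach's back-and-forth argument, using Remark~\ref{bi} to obtain bijective realizations $\tau\colon A \to B_1$ and $\sigma\colon B \to A_1$, iterating $C_0 = A\setminus A_1$, $C_{n+1} = (\sigma\circ\tau)(C_n)$ (the paper's $R_{n+1} = (\rho^{-1}\circ\tau)R_n$ with $\sigma = \rho^{-1}$), and invoking the refining and additive clauses of Lemma~\ref{consist} to assemble the final equidecomposition. Your write-up is in fact a bit more scrupulous than the paper's about the disjointness of the $C_n$ and about verifying that each intermediate object lives in $\overline{\A}$, but the argument is the same.
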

\begin{proof}
	We need only prove that``$\preceq$" is antisymmetric.  That is, if for $p,q \in \T$, $p \preceq q$ and $p \succeq q$ then $p = q$.  We will prove this directly.
	
	Suppose $p \preceq q$ and $p \succeq q$.  Then for some representatives we have $P \coprod \square \eqsim Q$ which implies $P \eqsim Q'$ for some $Q' \subseteq Q$.  Likewise, $Q \eqsim P'$ for some $P' \subseteq P$.  Moreover, by Remark~\ref{bi} we may assume these are equidecomposed via bijections $\tau: P \to Q'$ and $\rho: P' \to Q$.
	
	Define a sequence of sets in $\overline{\A}$ by taking $R_0 := P \setminus P'$ and $R_{n+1} := (\rho^{-1} \circ \tau)R_n$. Let $R := \bigcup\limits_{n=0}^\infty R_n$ and observe that $(\rho^{-1} \circ \tau)R = R$ which implies $\rho R = \tau R$ and so $\rho(P \setminus R) = Q \setminus \tau R$. Further as, $P \setminus R \subseteq P'$ and $\rho(P \setminus R) = Q \setminus \tau R$ by the refining property we have that $P \setminus R \eqsim Q \setminus \tau R$. Similarly, since $R \subseteq P$ we have $\tau R \eqsim R$.  Since, '$\eqsim_S$' is additive;
	\begin{equation}
		\textstyle
		P = (P \setminus R) \coprod R \eqsim (Q \setminus \tau R) \coprod \tau R = Q
	\end{equation} 
	Finally, as the constructed $R$ is in $\overline{\A}$, this is a realization of $p = q \in \T$.  Therefore, the canonical preorder on commutative monoids is a partial ordering of $\T$.
\end{proof}

\begin{thm}[\textbf{$\N$-Cancellation}]\label{ncan}
	Given $\alpha, \beta \in \T$ and $n \in \N$; $n \alpha = n \beta \implies \alpha = \beta$
\end{thm}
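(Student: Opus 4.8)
The plan is to mimic the classical proof of cancellation in cardinal algebras, but carried out concretely with equidecompositions. The key structural fact I expect to need is a \emph{divisibility / Euclidean-style decomposition}: if $n\alpha = n\beta$, I want to compare $\alpha$ and $\beta$ directly via the Schr\"oder--Bernstein property (Theorem~\ref{bsb}) rather than trying to ``cancel'' algebraically. So the target is to show $\alpha \preceq \beta$ and $\beta \preceq \alpha$; antisymmetry then finishes the job. By symmetry it suffices to produce a realization witnessing $\alpha \preceq \beta$.

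First I would fix representatives: choose $A, B \in \overline{\A}$ with $[A] = \alpha$, $[B] = \beta$, and a realization of $n[A] = [\,\coprod_{k=1}^n A\,] = [\,\coprod_{k=1}^n B\,] = n[B]$, which by Remark~\ref{bi} I may take to be a bijective equidecomposition $\tau$ between the disjoint union $A^{(1)} \sqcup \cdots \sqcup A^{(n)}$ of $n$ labelled copies of $A$ and the disjoint union $B^{(1)} \sqcup \cdots \sqcup B^{(n)}$ of $n$ labelled copies of $B$. The heart of the argument is a ``pigeonhole on copies'' bookkeeping: track, piece by piece, which copy of $A$ each fragment comes from and which copy of $B$ it lands in, producing a partition of the index set into finitely many measurable pieces according to the pair (source copy, target copy). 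Using the refining and additive properties of Lemma~\ref{consist}, each such piece can be transported to live inside a single copy of $A$ and a single copy of $B$, so that $\tau$ restricts to a bijective equidecomposition between a sub-object of $A^{(i)}$ and a sub-object of $B^{(j)}$ for each relevant pair $(i,j)$. Summing over $j$ (the target copies) for a fixed source copy $i$ recovers all of $A$; summing over $i$ for fixed $j$ recovers all of $B$.

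The main obstacle — and where the real work lies — is extracting from this $n \times n$ array of partial equidecompositions an \emph{honest inequality} $\alpha \preceq \beta$ rather than merely the tautology $n\alpha = n\beta$. The standard device is an averaging/iteration argument: one shows that the array, viewed as a doubly-indexed family of sub-types summing correctly in rows and columns, forces the existence of a ``diagonal'' sub-object of $A$ equidecomposable with a sub-object of $B$ that exhausts a definite fraction, and then iterates. Concretely, I would run the following recursion. Set $A_0 := A$, $B_0 := B$; having shrunk to $A_m, B_m$ with $n[A_m] = n[B_m]$ and $[A] = [A_m] + (\text{stuff already matched into } B)$, use the refining property to locate inside $n[A_m]$ a copy-block on which $\tau$ maps $A_m^{(i)}$-fragments entirely into $B_m^{(i)}$ for at least one $i$ (pigeonhole: some diagonal entry is ``nonempty enough''), peel that matched part off both sides, and repeat. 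This produces an increasing union $R := \bigcup_m R_m \subseteq A$ of matched fragments with a complementary bijective transport into $B$; taking the colimit (legitimate since $\overline{\A}$ is closed under countable coproducts and the $R_m$ are disjoint) gives $[A] \preceq [B]$. By the symmetric construction $[B] \preceq [A]$, and Theorem~\ref{bsb} yields $\alpha = \beta$.

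I expect the delicate points to be: (i) verifying that the pigeonhole step always finds a diagonal entry with nontrivial matched mass — this should follow from the fact that the $n$ row-sums and $n$ column-sums of the array are all equal to the \emph{same} type, so a purely off-diagonal configuration would violate $\N$-cancellation for \emph{smaller} blocks, handled by induction on $n$ — and (ii) bookkeeping the disjointness of the accumulated $R_m$ across infinitely many iterations so that the colimit genuinely lives in $\overline{\A}$; here Lemma~\ref{rep} (unique representation up to measurable partitions) keeps the fragments honest. Once those are in place, the additive and refining consistency properties of Lemma~\ref{consist} do all the remaining routine gluing.
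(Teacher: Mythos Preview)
The paper does not actually give a proof: it simply invokes Theorem~1.12 of Chuaqui~\cite{chuaqui1969}, which in turn rests on Tarski's cardinal-algebra machinery~\cite{tarski1949}. So there is no internal argument to compare against beyond the implicit claim that $\T$ satisfies the cardinal-algebra axioms.

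Your global strategy is sound: reduce to showing $\alpha \preceq \beta$ and $\beta \preceq \alpha$ from a bijective realization of $n\alpha = n\beta$, then apply Theorem~\ref{bsb}. Setting up the $n \times n$ array $A_{ij}$ (with $\sum_j [A_{ij}] = \alpha$ for each $i$ and $\sum_i [A_{ij}] = \beta$ for each $j$) via Lemma~\ref{consist} is also the right first move, and is exactly how the classical arguments begin.

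The gap is in step~(i). Your pigeonhole claim---that some \emph{diagonal} block $A_{ii}$ must carry nontrivial mass---is false as stated. Take $n=2$ and let $\tau$ simply swap copies, sending $A^{(1)}$ identically onto $B^{(2)}$ and $A^{(2)}$ identically onto $B^{(1)}$; then $A_{11} = A_{22} = \varnothing$. More generally a cyclic permutation of the $n$ copies makes every diagonal entry empty. Your proposed rescue (``a purely off-diagonal configuration would violate $\N$-cancellation for smaller blocks, handled by induction on $n$'') is not a well-formed inductive step: the row sums are all $\alpha$ and the column sums are all $\beta$, not ``the same type,'' and an off-diagonal array does not hand you an instance of $m\alpha' = m\beta'$ with $m < n$. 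In the swap example the conclusion $\alpha = \beta$ is immediate, but not through your peeling mechanism; your recursion $A_m, B_m$ simply stalls at $m=0$.

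What actually works is not a pigeonhole on the fixed index labels but a \emph{chase} in the spirit of K\"onig/Bernstein: starting from a point of $A^{(1)}$, alternately apply $\tau$ and the copy-identifications $A^{(i)} \leftrightarrow A^{(1)}$, $B^{(j)} \leftrightarrow B^{(1)}$ to produce an orbit, and partition $A$ measurably according to the orbit's combinatorial type. Equivalently, one proves the refinement (Riesz decomposition) property for $\T$ and then runs Tarski's original cardinal-algebra argument, which is what the Chuaqui reference packages. Your iteration needs to be rebuilt along one of these lines; the diagonal-peeling scheme, as written, does not get off the ground.
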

\begin{proof}
	This is an immediate consequence of Theorem 1.12 of~\cite{chuaqui1969}.
\end{proof}

\begin{cor}\label{idemimport}
	If $\alpha \in \T$ and $n \in \N$ are such that $(n+1) \alpha \preceq n \alpha$, then $2 \alpha = \alpha$.
\end{cor}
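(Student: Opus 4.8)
The plan is to derive this purely formally from the two preceding theorems: the Schröder–Bernstein property (Theorem~\ref{bsb}) collapses $(n+1)\alpha$ and $n\alpha$ into a single element, and then a short bootstrapping step plus one application of $\N$-Cancellation (Theorem~\ref{ncan}) yields $2\alpha = \alpha$. The guiding observation is that the hypothesis $(n+1)\alpha \preceq n\alpha$ is only ``missing'' the reverse inequality, which is automatic.

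First I would record that $n\alpha \preceq n\alpha + \alpha = (n+1)\alpha$ always holds, since the canonical preorder is witnessed here by $\gamma = \alpha$. Combining this with the hypothesis $(n+1)\alpha \preceq n\alpha$ and the antisymmetry of $\preceq$ established in Theorem~\ref{bsb}, we get the equality $(n+1)\alpha = n\alpha$, i.e. $n\alpha + \alpha = n\alpha$. Next, adding $\alpha$ to both sides repeatedly (a trivial induction on $m$) gives $n\alpha + m\alpha = n\alpha$ for every $m \geq 0$; specializing to $m = n$ produces $2n\alpha = n\alpha$, which is exactly $n(2\alpha) = n(\alpha)$. Applying $\N$-Cancellation (Theorem~\ref{ncan}) with this same $n$ then gives $2\alpha = \alpha$.

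I do not expect a genuine obstacle here: the corollary is essentially a bookkeeping consequence of Theorems~\ref{bsb} and~\ref{ncan}. The only points requiring a little care are making sure the ``free'' inequality $n\alpha \preceq (n+1)\alpha$ is oriented correctly before invoking antisymmetry, and ensuring the integer fed into $\N$-Cancellation is the one from the hypothesis and is positive. (If $n = 0$ is permitted in the statement, the first step already gives $\alpha \preceq 0$, whence $\alpha = 0 = 2\alpha$ by antisymmetry, so that degenerate case is harmless.)
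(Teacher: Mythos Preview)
Your proof is correct and follows essentially the same route as the paper: bootstrap from the hypothesis to $n(2\alpha) = n\alpha$ and then invoke $\N$-Cancellation (Theorem~\ref{ncan}). The only cosmetic difference is that you apply Schr\"oder--Bernstein (Theorem~\ref{bsb}) at the outset to upgrade the hypothesis to the equality $(n+1)\alpha = n\alpha$ and then iterate equalities, whereas the paper carries the chain of inequalities $n\alpha \succeq n\alpha + \alpha \succeq \cdots \succeq n(2\alpha) \succeq n\alpha$ and applies antisymmetry only at the end; the underlying argument is the same.
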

\begin{proof}
	Substituting the hypothesis into itself yields the inequality $(n-1)$ times, we have that:
	\begin{equation}\begin{array}{lclclcl}
		  n \alpha
		& \succeq
		& n \alpha + \alpha
		\\
		  
		& \succeq
		& n \alpha + 2 \alpha
		\\
		  
		& \vdots
		& 
		\\
		  
		& \succeq
		& n \alpha + n \alpha
		& =
		& n(2 \alpha)
		& \succeq
		& n \alpha
	\end{array}\end{equation}
	which implies that $n \alpha = n(2 \alpha)$. Therefore by Theorem~\ref{ncan}, $\alpha = 2 \alpha$ as desired.
\end{proof}
Theorem~\ref{ncan} and its Corollary should be interpreted geometrically in terms of their meaning for representations of the involved types.  If for some type $\alpha$ and natural number $n$ we have $n \alpha = (n+1) \alpha$ it is the case that $n$ disjoint copies of a representative $A$ of $\alpha$ collectively can be equidecomposed into $(n+1) $ copies of $A$.  The theorem then implies that $A$ itself can be equidecomposed into $2$ copies of $A$. This violates an, as it turns out incorrect, intuition about size: `partition and symmetry can't make more of something'. Fortunately, Theorem~\ref{ncan} does guarantee that the ways in which we can `make more of something' are all of a kind. Sets admitting some paradoxical result in measure admit a duplication.  The previous two results demonstrate that within this formalization of measure, it is only necessary to consider one kind of paradox: duplication.  This observation plays a key role in Section~\ref{structure} allowing us to decompose Type monoids according to those types which can be duplicated e.g. the idempotents.

\subsection{The Morphism Map}\label{mormap}
Having constructed the Tarski Type Monoid, we now turn our attention to morphisms in $\SM$. Recall, a morphism $f: (X,S,\A) \to (Y,T,\B)$ is an $(\A,\B)$-measurable function with the property that for all $t \in T$ there exists $s \in S$ such that $f^{-1} \circ t^{-1} = s^{-1} \circ f^{-1}$.  Such an $f$ determines a map $\overline{f}^{-1}:\overline{\B} \to \overline{\A}$ defined by mapping $ B = \coprod\limits_j B_j \mapsto \coprod\limits_j f^{-1}B_j \in \A$.  Moreover, the symmetry condition, that orbits pullback to orbits, precisely guarantees that this map induces a map of types.
\begin{defn}
	Given $f \in mor(\SM)$ define $\T f: \T(Y,T,\B) \to \T(X,S,\A)$ to be the map taking $[B] \mapsto [\overline{f}^{-1}B]$.
\end{defn}

\begin{prop}
	For every $f \in mor(\SM)$ the map $\T f: \T(Y,T,\B) \to \T(X,S,\A)$ is well-defined, partial order-preserving monoid $\sigma$-homomorphism.
\end{prop}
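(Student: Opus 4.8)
The plan is to verify the four asserted properties in sequence, with essentially all the content concentrated in the well-definedness of $\T f$ on equidecomposability classes; the remaining properties then fall out formally from the fact that $\overline{f}^{-1}$ is computed componentwise on coproducts. Before touching equidecomposability I would first check that $\overline{f}^{-1}:\overline{\B}\to\overline{\A}$ is well-defined as a map of objects, i.e.\ independent of the chosen representation $B=\coprod_j B_j$ with $B_j\in\B$ guaranteed by Lemma~\ref{rep}. Any two such representations differ only by measurable partitions appearing within $\B$, and since $f$ is a genuine function, its preimage commutes with genuine disjoint partitions: if $B_j = B_j'\sqcup B_j''$ in $\B$ then $f^{-1}B_j = f^{-1}B_j'\sqcup f^{-1}B_j''$ in $\A$. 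So refining a representation of $B$ only refines the corresponding representation of $\overline{f}^{-1}B$ by measurable partitions in $\A$, leaving the image unchanged as an object of $\overline{\A}$; and $\overline{f}^{-1}B\in\overline{\A}$ because each $f^{-1}B_j\in\A$ by $(\A,\B)$-measurability of $f$.

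Next comes the key step: if $B\eqsim_T B'$ in $\overline{\B}$, I must show $\overline{f}^{-1}B\eqsim_S\overline{f}^{-1}B'$ in $\overline{\A}$. Fix a realization $B=\coprod_j B_j$, $B'=\coprod_j B'_j$ with $t_j^{-1}B_j = u_j^{-1}B'_j$ for $t_j,u_j\in T$. Applying $f^{-1}$ to both sides and invoking the defining morphism identity of $\SM$ in the form $f^{-1}\circ t^{-1} = (f^*t)^{-1}\circ f^{-1}$, for each index $j$ we obtain
\[
(f^*t_j)^{-1}(f^{-1}B_j) \;=\; f^{-1}(t_j^{-1}B_j) \;=\; f^{-1}(u_j^{-1}B'_j) \;=\; (f^*u_j)^{-1}(f^{-1}B'_j).
\]
Since $\overline{f}^{-1}B=\coprod_j f^{-1}B_j$ and $\overline{f}^{-1}B'=\coprod_j f^{-1}B'_j$ by definition, and since $f^*:T\to S$ is an inverse-monoid homomorphism so that $f^*t_j, f^*u_j\in S$, these equations are precisely a realization of $\overline{f}^{-1}B\eqsim_S\overline{f}^{-1}B'$. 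Hence $\T f([B]):=[\overline{f}^{-1}B]$ is well-defined; if convenient one may first apply Remark~\ref{wlogidem} to assume each $u_j$ (say) is idempotent, and $f^*$ carries it to an idempotent of $S$, but this is not needed.

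The remaining properties are then formal. By definition $\overline{f}^{-1}$ sends the formal coproduct $B\coprod B'$ to $\overline{f}^{-1}B\coprod\overline{f}^{-1}B'$ and $\overline{f}^{-1}\varnothing=\varnothing$, so passing to types gives $\T f([B]+[B'])=\T f([B])+\T f([B'])$ and $\T f(0)=0$; the same componentwise computation applied to a countable family, realizing $\sum_i a_i$ by $\coprod_i B^{(i)}$ for disjoint-in-$\overline{\B}$ representatives $B^{(i)}$ of the $a_i$ and using $\overline{f}^{-1}\big(\coprod_i B^{(i)}\big)=\coprod_i\overline{f}^{-1}B^{(i)}$, yields $\T f\big(\sum_i a_i\big)=\sum_i\T f(a_i)$, so $\T f$ is a $\sigma$-homomorphism. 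Finally, order preservation is immediate: if $\alpha+\gamma=\beta$ in $\T(Y,T,\B)$ then applying the homomorphism $\T f$ gives $\T f(\alpha)+\T f(\gamma)=\T f(\beta)$, whence $\alpha\preceq\beta\implies\T f(\alpha)\preceq\T f(\beta)$. I expect the only genuine obstacle to be the bookkeeping in the well-definedness step — making sure the morphism identity is applied index-by-index and that the two preliminary well-definedness issues (independence of representation for $\overline{f}^{-1}$, and for $\T f$ on $\eqsim_T$-classes) are cleanly separated — since everything else reduces to preimages commuting with disjoint unions.
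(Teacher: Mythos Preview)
Your proposal is correct and follows essentially the same approach as the paper: establish well-definedness by pulling a realization of $B\eqsim_T B'$ back through $\overline{f}^{-1}$ using the morphism identity $f^{-1}\circ t^{-1}=(f^*t)^{-1}\circ f^{-1}$ index-by-index, then obtain the $\sigma$-homomorphism property from the componentwise behaviour of $\overline{f}^{-1}$ on coproducts, with order preservation following automatically from the canonical preorder. Your treatment is slightly more careful in that you separate out the preliminary check that $\overline{f}^{-1}$ is independent of the chosen representation of $B$ in $\overline{\B}$, which the paper leaves implicit.
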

\begin{proof}
	First we need show that if $[P] = [Q]$ then $\T f [P] = \T f [Q]$.  The following diagram applied to each part of a realization of $P \eqsim_T Q$ establishes this fact. Let $u_j,v_j \in T$ such that $u_j^{-1} P_j = v_j^{-1} Q_j$. Then for some $s_j := f^*(u_j)$ and $f^*(v_j):= t_j \in S$ we have:
	\begin{equation}\begin{tikzcd}
		  P_j
		  \ar[end anchor = {[xshift = +1.75ex]}]{r}[xshift = +.5ex]{u_j^{-1}}
		  \ar{dd}[swap]{\overline{f}^{-1}}
		& \hspace{1.75ex} u_j^{-1}P_j \hspace{2ex}
		  \ar[yshift = +.25ex, dash]{r}
		  \ar[yshift = -.25ex, dash]{r}
		  \ar{d}{\overline{f}^{-1}}
		&[-4.5ex] \hspace{2ex} v_j^{-1} Q_j \hspace{1.75ex}   
		  \ar{d}{\overline{f}^{-1}}
		& Q_j
		  \ar[end anchor = {[xshift = -1.75ex]}]{l}[swap, xshift = +.5ex]{v_j^{-1}}
		  \ar{dd}{\overline{f}^{-1}}
		\\
		& \overline{f}^{-1} u_j^{-1} P_j
		  \ar[yshift = +.25ex, dash]{r}
		  \ar[yshift = -.25ex, dash]{r}
		&[-4.5ex] \overline{f}^{-1} v_j^{-1} Q_j
		\\
		  \overline{f}^{-1} P_j
		& s_j^{-1} \overline{f}^{-1} P_j
		  \ar[dash, start anchor = {[yshift = +1.75ex]}, end anchor = {[yshift = -1.5ex]}, xshift = +.25ex]{u}
		  \ar[dash, start anchor = {[yshift = +1.75ex]}, end anchor = {[yshift = -1.5ex]}, xshift = -.25ex]{u}
		  \ar[leftarrow]{l}[swap, xshift = +.5ex]{s_j^{-1}}
		& t_j^{-1} \overline{f}^{-1} Q_j
		  \ar[dash, start anchor = {[yshift = +1.75ex]}, end anchor = {[yshift = -1.5ex]}, xshift = +.25ex]{u}
		  \ar[dash, start anchor = {[yshift = +1.75ex]}, end anchor = {[yshift = -1.5ex]}, xshift = -.25ex]{u}
		  \ar[leftarrow]{r}[xshift = +.5ex]{t_j^{-1}}
		& \overline{f}^{-1} Q_j
	\end{tikzcd}\end{equation}
	So, $\overline{f}^{-1} P \eqsim_S \overline{f}^{-1} Q$ which implies $\T f [P] = \T f [Q]$.  Hence, $\T f$ is well-defined. 
	
	Next, we show that $\T f$ is indeed a monoid $\sigma$-homomorphism.  Note that, as the partial orders on $\T(X,S,\A)$ and $\T(Y,T,\B)$ are the canonical preorders, this implies that $\T f$ is order-preserving.  Observe:
	\begin{equation}\begin{array}{lclclclcl}
		  \T f \Big( \displaystyle\sum\limits_j \big[ P_j \big] \Big)
		& =
		& \T f \Big( \displaystyle\coprod\limits_j P_j \Big)
		& =
		& \Big[ \overline{f}^{-1} \Big( \displaystyle\coprod\limits_j P_j \Big) \Big]
		\\
		  
		&
		& 
		& =
		& \Big[ \displaystyle\coprod\limits_j \overline{f}^{-1}P_j \Big]
		& =
		& \displaystyle\sum\limits_j \big[ \overline{f}^{-1} P_j \big]
		& =
		& \displaystyle\sum\limits_j \T f \big[ P_j \big]
	\end{array}\end{equation}
\end{proof}

\begin{thm}
	The map $\T: \SM \to \Mon$ given by mapping $(X,S,\A) \mapsto \T(X,S,\A)$ and $f \mapsto \T f$ is a cofunctor.
\end{thm}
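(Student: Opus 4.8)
The plan is to verify the two cofunctoriality axioms directly, since most of the substantive work has already been done. By the preceding proposition, for every morphism $f: (X,S,\A) \to (Y,T,\B)$ in $\SM$ the map $\T f: \T(Y,T,\B) \to \T(X,S,\A)$ is a well-defined monoid homomorphism, so it lands in $\Mon$ as required. It remains to check (i) that $\T$ preserves identities, i.e. $\T(\mathrm{id}_{(X,S,\A)}) = \mathrm{id}_{\T(X,S,\A)}$, and (ii) that $\T$ reverses composition, i.e. $\T(g \circ f) = \T f \circ \T g$ for composable morphisms $f: (X,S,\A) \to (Y,T,\B)$ and $g: (Y,T,\B) \to (Z,U,\C)$.

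For the identity, I would first note that the identity morphism on $(X,S,\A)$ is the identity set map $\mathrm{id}_X$ together with the identity inverse monoid homomorphism on $S$ as its witness $f^*$; one checks this satisfies the orbit-pullback condition trivially. Then $\overline{\mathrm{id}_X}^{-1}: \overline{\A} \to \overline{\A}$ sends $\coprod_j B_j \mapsto \coprod_j \mathrm{id}_X^{-1}(B_j) = \coprod_j B_j$, so it is the identity on $\overline{\A}$, and hence descends to the identity on the quotient $\T(X,S,\A) = \overline{\A}/\eqsim_S$. For composition, the key observation is that preimages compose contravariantly: for composable $f, g$ one has $(g \circ f)^{-1}(C) = f^{-1}(g^{-1}(C))$ for $C \in \C$, and this extends to $\overline{\C}$ since $\overline{(g\circ f)}^{-1}\bigl(\coprod_j C_j\bigr) = \coprod_j f^{-1}(g^{-1}(C_j)) = \overline{f}^{-1}\bigl(\overline{g}^{-1}(\coprod_j C_j)\bigr)$. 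Passing to types, $\T(g\circ f)[C] = [\overline{(g\circ f)}^{-1}C] = [\overline{f}^{-1}\overline{g}^{-1}C] = \T f([\overline{g}^{-1}C]) = \T f(\T g[C])$, which is exactly $(\T f \circ \T g)[C]$.

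I would also remark briefly on the one point requiring a small check: that the composite $g \circ f$ is again a morphism in $\SM$, so that $\T(g\circ f)$ even makes sense. This follows because if $f^*: T \to S$ and $g^*: U \to T$ witness $f$ and $g$ respectively, then $f^* \circ g^*: U \to S$ is an inverse monoid homomorphism and witnesses $g \circ f$: for $u \in U$ and $C \in \C$, $(g\circ f)^{-1}(u^{-1}(C)) = f^{-1}(g^{-1}(u^{-1}(C))) = f^{-1}((g^*u)^{-1}(g^{-1}(C))) = (f^*g^*u)^{-1}(f^{-1}(g^{-1}(C))) = (f^*g^*u)^{-1}((g\circ f)^{-1}(C))$. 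Since this verification is routine and none of the above steps presents a genuine obstacle, the proof is essentially bookkeeping; the only mild subtlety is being careful that the assignment $f \mapsto f^*$ on witnesses is itself functorial so that everything is consistent, but as just shown this is immediate from contravariance of preimage.
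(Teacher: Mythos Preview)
Your proposal is correct and follows essentially the same approach as the paper: verify the identity axiom by noting that $\overline{\mathrm{id}_X}^{-1}$ is the identity on $\overline{\A}$, and verify reversal of composition via $\overline{(g\circ f)}^{-1} = \overline{f}^{-1}\circ\overline{g}^{-1}$. Your write-up is in fact more thorough than the paper's, which omits the check that $g\circ f$ is again an $\SM$-morphism and leaves the composition identity as a one-line remark.
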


\begin{proof}
	First, consider the identity map $id_X : (X,S,\A) \to (X,S,\A)$ and observe that $\T(id_X) = id_{\T(X,S,\A)}$ since $\T(id_X)$ sends type $[A] \mapsto [A]$ by definition. Second, suppose that we have morphisms $(X,S,\A) \overset{f}{\to} (Y,T,\B) \overset{g}{\to} (Z,R,\C)$.  Then we must show that $\T(g \circ f) = \T f \circ \T g$ but this is a direct consequence of the fact that $\overline{(g \circ f)}^{-1} = \overline{f}^{-1} \circ \overline{g}^{-1}$.
\end{proof}

\subsection{Tarski Measure}\label{tarskimeas}
As may be expected by now, Tarski measure is given by passing a measurable set to its equidecomposability class.  Specifically,
\begin{defn}
	Given $(X,S,\A) \in \SM$ we define $m_{(X,S,\A)}: \A \to \T(X,S,\A)$ as the map taking a measurable set $A$ to $[A]$, the $(S,\overline{\A})$-equidecomposability class of $A$.
\end{defn}

\begin{prop}
	Let $m = m_{(X,S,\A)}$ be Tarski measure on $(X,S,\A)$. Then $m$ has the properties stated in Theorem~\ref{bigtheorem}.  Namely;
	\begin{description}
		\item[Null Identity:]
			$m(\varnothing) = 0$ the additive identity in $\T(X,S,\A)$ 
		\item[Countable Additivity:]
			If $\{A_{i}\}^{\infty}_{i=1}$ is a sequence of disjoint sets in $\A$ then $m\left( \bigcup\limits_{i=1}^{\infty} A_{i} \right) = \sum\limits_{i=1}^{\infty} m(A_{i})$
		\item[Stationarity:]
			$m(A) = m(s^{-1}A)$ for all $s \in S$
	\end{description}
\end{prop}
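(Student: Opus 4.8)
The plan is to verify the three listed properties directly from the definitions; two of them are essentially immediate and the third requires only a small supplement. \textbf{Null Identity} is a tautology: $m(\varnothing)=[\varnothing]$, and $[\varnothing]=0$ is precisely the additive identity of $\T(X,S,\A)$ exhibited in Proposition~\ref{typemon}. For \textbf{Stationarity}, fix $s\in S$ and $A\in\A$; since the action map $\Phi_s\colon X\to X$ is $(\A,\A)$-measurable we have $s^{-1}A=\Phi_s^{-1}(A)\in\A$, so $m(s^{-1}A)=[s^{-1}A]$ is defined, and it remains to show $A\eqsim_S s^{-1}A$. The trivial one-block realization does this: partition $A$ into the single piece $A$ and $s^{-1}A$ into the single piece $s^{-1}A$, and take the transformations $s$ and $1\in S$, so that $s^{-1}A = 1^{-1}(s^{-1}A)$ (the block $1$ being already idempotent, as permitted by Remark~\ref{wlogidem}). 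Hence $m(A)=[A]=[s^{-1}A]=m(s^{-1}A)$.

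For \textbf{Countable Additivity} I would first record that the binary operation $+$ on $\T$ extends to countable sums via $\sum_{j\in J}[P_j]:=\big[\coprod_{j\in J}P_j\big]$ for any countable family $(P_j)_{j\in J}\subseteq\overline{\A}$, and that this is well defined and insensitive to regrouping: the argument of Proposition~\ref{typemon} applies verbatim with a countable index set replacing the two-element one, using that a countable coproduct of the (countably indexed) realizations of $\eqsim_S$ is again such a realization, and that a countable coproduct of countable coproducts is a countable coproduct. Granting this, let $\{A_i\}_{i=1}^\infty\subseteq\A$ be pairwise disjoint. Then $\bigcup_i A_i\in\A$, and because the $A_i$ are already pairwise disjoint as subsets of $X$ the union coincides with the coproduct $\coprod_i A_i$ inside $\overline{\A}$ --- this is Lemma~\ref{rep} applied to the measurable set $\bigcup_i A_i$ together with its measurable partition $(A_i)_i$. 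Consequently
\[
m\Big(\bigcup_{i=1}^\infty A_i\Big)=\Big[\coprod_{i=1}^\infty A_i\Big]=\sum_{i=1}^\infty[A_i]=\sum_{i=1}^\infty m(A_i).
\]

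The only genuine obstacle --- and it is a mild one --- is the extension of the monoid operation to the countably infinite sums that already appear in the statement: one must check that $\big[\coprod_j P_j\big]$ depends only on the types $[P_j]$ and not on their chosen representatives, and is unchanged by arbitrary countable regrouping. Both facts follow by rerunning the proof of Proposition~\ref{typemon} with countable index sets, the key point being that countability is preserved under countable iteration of coproducts. Everything else in the proposition is a one-line unwinding of the definitions of $m$, of $\T(X,S,\A)$, and of $\eqsim_S$.
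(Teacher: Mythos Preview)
Your proof is correct and follows the same approach as the paper's own argument, which dispatches all three items in a few lines by invoking Proposition~\ref{typemon}, the definition of the addition law in $\T$, and the evident relation $A \eqsim_S s^{-1}A$. Your version is more explicit in two respects---you spell out the one-block realization witnessing $A \eqsim_S s^{-1}A$, and you flag the need to check that countable sums $\sum_{j\in J}[P_j]:=\big[\coprod_{j\in J}P_j\big]$ are well defined---but these are elaborations of the paper's argument rather than a different route.
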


\begin{proof}
	These follow easily from the established properties of equidecomposability.  By Propostion~\ref{typemon}, $m(\varnothing) = [\varnothing]$ is the additive identity in $\T$.  Countable Additivity follows by definition from the addition law in $\T$ since for pairwise disjoint measurable sets $A_i$ we have $m\big( \bigcup\limits_{i=1}^{\infty} A_{i} \big) = \big[ \coprod\limits_i A_i \big] = \sum\limits_i [A_i] = \sum\limits_i m(A_i)$.  Finally, stationarity follows since $A \eqsim_S s^{-1}A$.
\end{proof}

Now, having both the type cofunctor and Tarski measure in hand, we may prove the last remaining claim in Theorem~\ref{bigtheorem}, the commutativity of measurement.
\begin{prop}
	Let $m_X: \A \to \T(X,S,\A)$ and $m_Y: \B \to \T(Y,T,\B)$ denote the Tarski measures of $(X,S,\A)$ and $(Y,T,\B)$ respectively. If $f: (X,S,\A) \to (Y,T,\B)$, then $m_X \circ f^{-1} = \T f \circ m_Y$.
\end{prop}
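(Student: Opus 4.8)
The plan is to unwind both sides of the claimed identity on an arbitrary measurable set $B \in \B$ and observe that they agree essentially by definition. Fix $B \in \B$. On the one hand, $m_X(f^{-1}(B)) = [f^{-1}(B)]_{\eqsim_S}$, the $(S,\overline{\A})$-equidecomposability class of the measurable set $f^{-1}(B) \in \A$. On the other hand, $(\T f \circ m_Y)(B) = \T f([B]_{\eqsim_T}) = [\overline{f}^{-1}(B)]_{\eqsim_S}$ by the definition of $\T f$. So it suffices to show that $f^{-1}(B)$ and $\overline{f}^{-1}(B)$ represent the same type in $\T(X,S,\A)$ — and in fact they are literally equal as objects of $\overline{\A}$.

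The key step is therefore just the compatibility of $\overline{f}^{-1}$ with $f^{-1}$ on genuine measurable sets. Recall from Section~\ref{mormap} that $\overline{f}^{-1}: \overline{\B} \to \overline{\A}$ is defined by sending a coproduct representative $B = \coprod_j B_j$ (with $B_j \in \B$) to $\coprod_j f^{-1}(B_j) \in \overline{\A}$. When $B \in \B$ is already an honest measurable set, it is represented in $\overline{\B}$ by the trivial one-term coproduct indexed by a singleton, so $\overline{f}^{-1}(B) = f^{-1}(B)$ as an object of $\overline{\A}$; since $f$ is $(\A,\B)$-measurable, this lies in $\A \subseteq \overline{\A}$. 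Hence $m_X(f^{-1}(B)) = [f^{-1}(B)] = [\overline{f}^{-1}(B)] = \T f(m_Y(B))$, and since $B$ was arbitrary, $m_X \circ f^{-1} = \T f \circ m_Y$ as maps $\B \to \T(X,S,\A)$.

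The only point requiring a little care — and the closest thing to an obstacle — is the well-definedness issue lurking behind Lemma~\ref{rep}: a single object of $\overline{\B}$ admits several coproduct representations, and one must know $\overline{f}^{-1}$ is independent of the chosen representation before the equality $\overline{f}^{-1}(B) = f^{-1}(B)$ is meaningful. But this independence is exactly what Lemma~\ref{rep} provides (representations are unique up to measurable partitions, and $f^{-1}$ commutes with partitions since preimage preserves disjoint unions), so it has already been absorbed into the construction of $\T f$ in the previous subsection. With that in hand the proof is immediate, and nothing about the inverse-monoid symmetry structure is needed for this particular statement — the symmetry hypothesis on $f$ was already consumed in showing $\T f$ is well-defined on types.
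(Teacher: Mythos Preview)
Your proof is correct and follows essentially the same approach as the paper: evaluate both sides on an arbitrary $B \in \B$ and observe that $\overline{f}^{-1}(B) = f^{-1}(B)$ whenever $B$ is already a measurable set, so both expressions reduce to $[f^{-1}(B)]$. Your additional remarks on well-definedness via Lemma~\ref{rep} and on the symmetry hypothesis being already consumed are helpful elaborations, but the core argument is identical to the paper's.
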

\begin{proof}
	Let $B \in \B$ and consider $m_X \circ f^{-1} B = [f^{-1}B]_X \in \T(X,S,\A)$.  Because $B \in \B$, we have that $f^{-1}B = \overline{f}^{-1}B$.  Hence, $\T f \circ m_Y B = \T f [B] = f^{-1}B$ as claimed.
\end{proof}

Jointly, the results in Sections~\ref{typedef}, \ref{mormap} and \ref{tarskimeas} complete the proof of Theorem 1.

\section{Structure of the Type Monoid}\label{structure}

\subsection{Motivation}
Commonly in Semigroup Theory, it is useful to discuss the structure of the semilattice of \emph{idempotents}. In light of Corollary~\ref{idemimport}, studying the idempotents of type monoids is doubly important. As it stands, a given type monoid $\T := \T(X,S,\A)$ is a very abstracted entity.  Section~\ref{idemlattice} sheds some light on their character by first analysing the collection of idempotent elements of a type monoid $\E(\T):= \{ e \in \T: e+e = e \}$ and then refining our understanding of the addition operation as it relates to the idempotents.

\subsection{The Idempotents form a Distributive Lattice}\label{idemlattice}
Viewed as a category, a lattice is a thin category possessing all finite products and coproducts.  Algebraically, a lattice may be characterized as a being a set $\E$ together with a pair of idempotent, commutative, and associative binary operations $\vee$ and $\wedge$ possessing identities $\top$ and $\bot$ respectively and satisfying the \emph{absorption laws} 
\begin{equation}\begin{array}{ccc}
	  e \vee   (e \wedge f) = e
	& \hspace{50pt}\mbox{ and }\hspace{50pt}
	& e \wedge (e \vee   f) = e 
\end{array}\end{equation}
If in addition, the \emph{distributive laws}
\begin{equation}\begin{array}{ccc}
	  e \vee   (f \wedge g) = (e \vee   f) \wedge (e \vee   g)
	& \hspace{15pt}\mbox{ and }\hspace{15pt}
	& e \wedge (f \vee   g) = (e \wedge f) \vee   (e \wedge g)
\end{array}\end{equation}
hold, then a lattice is called a \emph{distributive lattice}.
\begin{defn}
	Given idempotents $e,f \in \E(\T)$ the \emph{join} of $e$ and $f$, written $e \wedge f$ is defined: $$e \wedge f := \max\Big\{ [E \cap F] : [E] = e \mbox{ and } [F] = f \Big\}$$
\end{defn}

It is simple to verify that $(e \wedge f) \in \E(\T)$. The intuition here is that any overlap $E \cap F$ of realizations $E$ and $F$ can be doubled because the associated types of are idempotent e.g. $[E] = e = [E \coprod E]$ and so the maximal elements should double to themselves and hence be idempotent.

More formally, observe that the maximum is well defined by supposing to the contrary that $m_1$ and $m_2$ are distinct maxima realized by the sets $(E_1 \cap F_1), (E_2 \cap F_2) \in \overline{\A}$.  Then by idempotence of $e$ and $f$ we have that $e = [E_1 \coprod E_2]$ and $f = [F_1 \coprod F_2]$ which implies that $[(E_1 \coprod E_2) \cap (F_1 \coprod F_2)] = [E_1 \cap F_1 \coprod E_2 \cap F_2] = m_1 + m_2$ is also a realization of an intersection of $e$ and $f$.  Consequently, since $m_1$ and $m_2$ are maxima, we have $m_1 \succeq m_1 + m_2 \preceq m_2$.  But by definition $m_1 \preceq m_1 + m_2 \succeq m_2$ and hence $m_1 = m_1+m_2 = m_2$.  Applying this argument to the maximum (e.g. unique maximal element) we see immediately that $(e \wedge f) = 2(e \wedge f)$.  That is, the join is idempotent.

\begin{prop}\label{idemmeet}
	Given an idempotent $m \in \E(\T)$ we have that $m \preceq (e \wedge f) \iff m \preceq e \mbox{ and } m \preceq f$
\end{prop}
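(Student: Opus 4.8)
The plan is to prove the two directions separately, exploiting the definition $e \wedge f := \max\{[E \cap F] : [E] = e, [F] = f\}$ together with the refining property of equidecomposability from Lemma~\ref{consist} and the Schr\"oder--Bernstein property (Theorem~\ref{bsb}) which makes $\preceq$ a genuine partial order.

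For the forward direction ($\Rightarrow$), suppose $m \preceq (e \wedge f)$. Fix realizations $E, F \in \overline{\A}$ with $[E] = e$, $[F] = f$ attaining the maximum, so $[E \cap F] = e \wedge f$. Since $E \cap F \subseteq E$ in $\overline{\A}$, we have $[E \cap F] \preceq [E] = e$, and likewise $[E \cap F] \preceq [F] = f$. Then transitivity of $\preceq$ gives $m \preceq e \wedge f \preceq e$ and $m \preceq e \wedge f \preceq f$, which is what we want. (Here I am using that $R \subseteq P$ in $\overline{\A}$ implies $[R] \preceq [P]$, since $P = R \coprod (P \setminus R)$ is a coproduct in $\overline{\A}$, hence $[P] = [R] + [P \setminus R]$.)

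The reverse direction ($\Leftarrow$) is the substantive one. Assume $m \preceq e$ and $m \preceq f$; we must produce a realization of $m \preceq e \wedge f$. Since $m$ is idempotent and $m \preceq e$, choose a representative $M$ of $m$ and a representative $E$ of $e$ with $M \eqsim_S$ a sub-object of $E$; by Remark~\ref{bi} (restricting to bijective realizations) and the refining property, I can arrange $M' \subseteq E$ in $\overline{\A}$ with $[M'] = m$. Similarly I can arrange $M'' \subseteq F$ in $\overline{\A}$ with $[M''] = m$. Now I want a single copy of $m$ sitting inside \emph{both} an $e$-realization and an $f$-realization simultaneously; the idempotence of $m$ (and of $e$, $f$) is what lets me glue. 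The cleanest route: since $m$ is idempotent, $m = m + m$, and since $M' \eqsim_S M''$ via a bijection $\tau: M' \to M''$, I can build the ``disjoint union along $\tau$'' — more carefully, replace $E$ by $E' := E$ and $F$ by a translated/equidecomposed copy $F'$ so that the chosen copies of $m$ literally coincide as sub-objects, using that $[F'] = f$ is unchanged under equidecomposition. Then a copy $M_0$ of $m$ lies inside $E' \cap F'$, giving $m = [M_0] \preceq [E' \cap F']$, and since $[E' \cap F'] \preceq e \wedge f$ by maximality, transitivity finishes it.

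The main obstacle is exactly this gluing step in the reverse direction: making precise that two separately-chosen sub-objects realizing $m$ — one inside an $e$-realization, one inside an $f$-realization — can be transported by equidecomposition to genuinely \emph{overlap}, so that their common part is a realization of an intersection of $e$ and $f$. The idempotence of $e$ and $f$ is essential here (it is what guarantees that after we move $F$ to $F'$ we still have $[F'] = f$, and that no ``collision'' with the pieces of $E$ costs us anything), and I expect the argument to parallel the well-definedness proof of $e \wedge f$ given just before Proposition~\ref{idemmeet}, where two realizations of intersections were combined via $[(E_1 \coprod E_2) \cap (F_1 \coprod F_2)] = m_1 + m_2$. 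I would lean on that same bookkeeping, together with the refining and additive properties of $\eqsim_S$, rather than reproving it from scratch.
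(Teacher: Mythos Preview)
Your forward direction is fine and matches the paper's.

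For the reverse direction, your approach works in principle but is considerably more elaborate than necessary, and the ``main obstacle'' you identify---transporting $F$ to some $F'$ so that the two separately chosen copies $M'\subseteq E$ and $M''\subseteq F$ coincide---is an obstacle of your own making. The paper sidesteps it entirely by exploiting the freedom to \emph{choose} the representatives $E$ and $F$ in $\overline{\A}$. From $m\preceq e$ and $m\preceq f$ one has $e=m+\gamma$ and $f=m+\gamma'$ for some types $\gamma,\gamma'$; picking a single representative $M$ of $m$ and representatives $K,K'$ of $\gamma,\gamma'$, the paper simply sets $E:=M\coprod K$ and $F:=M\coprod K'$ as coproducts in $\overline{\A}$. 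Since coproducts are formal disjoint unions, $E\cap F=M$, whence $m=[M]=[E\cap F]\preceq e\wedge f$ by the maximality defining $e\wedge f$. No gluing, no transport, no bijective realizations, no appeal to the refining property.

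In particular, the idempotence of $e$ and $f$ (and of $m$) that you lean on for the gluing step is not actually used in the paper's argument for this direction; the whole thing follows from the coproduct structure of $\overline{\A}$ and the definition of $\preceq$.
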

\begin{proof}
	The forward implication follows by definition as $(e \wedge f) = [E \cap F] \preceq [E],[F] = e, f$ respectively.  For the reverse suppose  $m \preceq e,f$ but $m \not\preceq (e \wedge f)$.  But this is nonsense as we would have for some representatives that: $e = [E] := [M \coprod K]$ and $f = [F] := [M \coprod K']$ which implies that $m = [M] = [E \cap F]$ which in turn implies that $m \preceq (e \wedge f)$ as desired.
\end{proof}

Now we wish to prove the analogous result for the \emph{join} operation in $\E(\T)$ which is given by addition e.g. $e \vee f := e + f$. To do so we first prove the following useful proposition about the arithmetic of the type monoid.

\begin{lem}\label{idembtw}
	Given $a,b \in \T$ we have $a \preceq f \preceq b$ for some $f \in \E(\T) \iff a+b = b$.
\end{lem}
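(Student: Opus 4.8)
The plan is to prove both implications directly, working with representatives in $\overline{\A}$ and exploiting the Schröder–Bernstein property (Theorem~\ref{bsb}) together with $\N$-cancellation (Theorem~\ref{ncan}) to pin down equalities from inequalities.

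For the forward direction, suppose $a \preceq f \preceq b$ with $f$ idempotent. From $a \preceq f$ write $f = a + c$ for some $c \in \T$, and from $f \preceq b$ write $b = f + d$. Then $a + b = a + f + d = a + (a+c) + d$. The key observation is that since $f = f + f$, we get $f \preceq f + a \preceq f + f = f$, so by antisymmetry $f + a = f$; that is, adding $a$ to the idempotent $f$ changes nothing. (This little fact — that $a \preceq f \in \E(\T)$ implies $a + f = f$ — is really the engine of the whole lemma, and I would isolate it as the first step.) Applying it, $a + b = a + f + d = f + d = b$, as desired.

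For the reverse direction, suppose $a + b = b$. Iterating, $b = a + b = 2a + b = \cdots = na + b$ for every $n \in \N$. The natural candidate for the interpolating idempotent is $f := \sum_{n=1}^\infty na'$-style construction, but more cleanly: pick a representative $A \coprod B \in \overline{\A}$ with $[A] = a$, $[B] = b$, realizing $a + b = b$, i.e. $A \coprod B \eqsim_S B$. Then form the countable coproduct $E := \coprod_{n=1}^\infty A_n \coprod B$ of disjoint copies; using the realization repeatedly (a "Hilbert hotel'' argument, shifting the copies of $A$ down the line exactly as in the Galileo remark $\Z \eqsim_S \Z \coprod \Z$) one shows $E \coprod E \eqsim_S E$, so $f := [E]$ is idempotent. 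By construction $a \preceq [E]$ since $A$ appears as a summand, and $[E] \preceq b$ because $E$ is built from one copy of $B$ together with countably many copies of $A$, and $a + b = b$ forces (by induction and a limiting step via countable additivity in $\T$) $[E] = \sum_n a + b = b$ — in fact $[E] = b$, which gives both $a \preceq f$ and $f \preceq b$ at once. So one can even take $f = [E] \preceq b$, and then $a \preceq f \preceq b$.

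The main obstacle I anticipate is the limiting step in the reverse direction: passing from the finite equalities $na + b = b$ to a genuine idempotent requires building an honest element of $\overline{\A}$ (a countable coproduct) and verifying $E \coprod E \eqsim_S E$ via an explicit reindexing of the equidecomposition pieces — this is where one must be careful that the countably many realizations can be amalgamated into a single one, using the refining and additive consistency properties of Lemma~\ref{consist}. Everything else is bookkeeping with the canonical preorder, but that amalgamation is the step I would write out in full detail.
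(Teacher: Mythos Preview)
Your forward direction is correct and matches the paper's argument exactly (the paper writes $a+b = a+f+b' \preceq 2f + b' = f + b' = b$, which is your isolated observation $a+f=f$ in compressed form).

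For the reverse direction you take a genuinely different route. The paper argues non-constructively: it sets $U := \{\alpha \in \T : \alpha + b = b\}$, takes $f$ to be its (asserted) unique maximal element, and observes that $f+f \in U$ forces $f$ idempotent, while $a \in U$ and $f+b=b$ give $a \preceq f \preceq b$. Your approach instead builds the idempotent explicitly as a countable coproduct. This is more concrete and arguably more honest, since the paper's ``let $f$ be the unique maximal element of $U$'' is itself not justified on the page.

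That said, the limiting step you flag is real and your sketch does not quite close it: ``countable additivity'' alone does not pass from $na+b=b$ to $\sum_n a + b = b$. The cleanest fix is to take $f := \sum_{n} a = [\coprod_n A_n]$ directly rather than your $[E]$. Idempotence of $f$ is immediate (reindex $\N \sqcup \N \cong \N$), and $a \preceq f$ is trivial. For $f \preceq b$, fix a bijective realization $\sigma: A \coprod B \to B$ (Remark~\ref{bi}), set $\phi := \sigma|_B$, and telescope: $B = \coprod_{n \geq 0} \phi^n(\sigma(A)) \sqcup \bigcap_n \phi^n(B)$, with each $\phi^n(\sigma(A)) \eqsim A$ by Lemma~\ref{consist}. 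This exhibits $\sum_n a \preceq b$. Your stronger claim $[E]=b$ then follows \emph{a posteriori} from the forward direction applied to $f \preceq b$.
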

\begin{proof}
	For the reverse implication, consider the collection $U:= \{ \alpha \in \T: \alpha + b = b \}$. Let $f$ be the unique maximal element of $U$. Since $f$ is maximal and $f+f \in U$ we have that $f \in \E(\T)$ and by construction $a \preceq f \preceq b$. Now we establish the forward direction.  By definition, $a+b \succeq b$ and by hypothesis $b = f+b'$ for some $b'$. So $a+b = a+f+b' \preceq 2f+b' = f+b' = b$.  Therefore, as '$\preceq$' is a partial order, $a+b=b$.
\end{proof}

Indeed, because $\T$ is a cardinal algebra as defined by Tarski~\cite{tarski1949} we can improve this result to the following:
\begin{thm}\label{idemsum}
	Given $a,b \in \T$ we have $a \preceq \sum\limits_{j \in J} a \preceq b \iff a+b = b$ where $J$ is a countable index set. If in addition, $b$ is minimal e.g. for any $c \in \T$ such that $a+c = c$ we have $b \preceq c$ then $b = \sum\limits_{j \in J} a$.
\end{thm}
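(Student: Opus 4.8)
The plan is to take $f := \sum_{j\in J} a$ for a countably infinite index set $J$ --- this exists because $\T$, being a cardinal algebra in the sense of \cite{tarski1949}, is closed under countable sums --- and to show that $f$ is precisely the idempotent interpolating $a$ and $b$ from Lemma~\ref{idembtw}. Two preliminary observations make this work. Since $J$, $J \sqcup J$ and $\{*\} \sqcup J$ are all countably infinite, invariance of countable sums under reindexing together with the regrouping postulate for cardinal algebras give $f + f = \sum_{j \in J \sqcup J} a = f$ and $a + f = \sum_{j \in \{*\} \sqcup J} a = f$; hence $f \in \E(\T)$ and $a \preceq f$.

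Granting this, the equivalence is short. If $a \preceq f \preceq b$, then since $f$ is idempotent Lemma~\ref{idembtw} immediately gives $a + b = b$ (equivalently, writing $b = f + c$ one has $a + b = a + f + c = f + c = b$ using $a + f = f$). Conversely, suppose $a + b = b$. As $a \preceq f$ always holds, it suffices to produce $c$ with $b = f + c$. For this I would apply the remainder (limit) postulate for cardinal algebras to the constant descending chain $a_n := b$, $b_n := a$ for all $n \geq 0$: the requisite relation $a_n = b_n + a_{n+1}$ is exactly the hypothesis $b = a + b$, and the postulate then furnishes $c$ with $b = a_0 = c + \sum_{k \geq 0} b_k = c + \sum_{k \geq 0} a = c + f$. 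Thus $f \preceq b$, so $a \preceq f \preceq b$ as desired.

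For the final clause, assume moreover that $a + b = b$ and that $b \preceq c$ for every $c \in \T$ with $a + c = c$. Applying the latter to $c = f$, which is legitimate since $a + f = f$, gives $b \preceq f$; while the equivalence just proved, applied to $a + b = b$, gives $f \preceq b$. Since $\preceq$ is a partial order on $\T$ by Theorem~\ref{bsb}, we conclude $b = f = \sum_{j \in J} a$. The one step deserving real care is the appeal to the cardinal-algebra remainder postulate: one must confirm that the formulation in \cite{tarski1949} does apply to the constant-sequence instance and delivers the "tail" $\sum_k a$ as a summand of $b$. Everything else is bookkeeping with the regrouping postulate and with facts already in hand --- that $\preceq$ is a partial order (Theorem~\ref{bsb}) and that Lemma~\ref{idembtw} characterizes the relation $a + b = b$ via interpolation by an idempotent.
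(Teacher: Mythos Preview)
Your argument is correct and is precisely the sort of thing the paper has in mind: the paper does not actually write out a proof of Theorem~\ref{idemsum}, but merely states it after observing that ``$\T$ is a cardinal algebra as defined by Tarski~\cite{tarski1949}'' and that Lemma~\ref{idembtw} can therefore be sharpened. Your proposal unpacks that citation by invoking the two relevant cardinal-algebra postulates explicitly --- the regrouping/reindexing of countable sums (to get $f=\sum_{j\in J}a$ idempotent with $a+f=f$) and the remainder postulate (applied to the constant sequences $a_n=b$, $b_n=a$) --- together with Lemma~\ref{idembtw} and the antisymmetry of $\preceq$ from Theorem~\ref{bsb}. One small point worth making explicit: the statement tacitly requires $J$ to be countably \emph{infinite}, since for finite $J$ the equivalence fails; you correctly build this into your choice of $f$. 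Your caution about verifying that Tarski's remainder postulate applies to the constant-sequence instance is well placed, but the verification is routine: the hypothesis $a_n=b_n+a_{n+1}$ is exactly $b=a+b$, and the postulate then delivers $c$ with $b=c+\sum_{k\geq 0}a$.
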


Together, Lemma~\ref{idembtw} and Theorem~\ref{idemsum} imply by induction that all idempotents are the countable-fold sum of smaller elements of $\T$. Specifically, we have the following Corollary:
\begin{cor}\label{idemsum2}
	The set of idempotents $\E(\T) = \{ \sum\limits_{j \in J} a : a \in \T \}$ and for any idempotent $e$ we have $e = \sum\limits_{j \in J} e$.
\end{cor}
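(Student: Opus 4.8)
The plan is to read off both assertions from Lemma~\ref{idembtw} and Theorem~\ref{idemsum}, using nothing beyond the arithmetic of the cardinal algebra $\T$ that those results already presuppose. Throughout, $J$ denotes a fixed countably \emph{infinite} index set; the only facts about the operation $\sum_{j \in J}(-)$ that I will need are that it is defined on every sequence in $\T$, that it is order-preserving, that it is invariant under bijective reindexing of $J$, and that it satisfies the cardinal-algebra postulate $a + \sum_{j \in J} a = \sum_{j \in J} a$ for the constant sequence. I would carry out the proof in two steps: first the identity $e = \sum_{j \in J} e$ for $e \in \E(\T)$, and then, using it, the equality of the two displayed sets.

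For the first step, apply Theorem~\ref{idemsum} with $a = b = e$. Since $e$ is idempotent the hypothesis $a + b = b$ holds, so the theorem gives $e \preceq \sum_{j \in J} e \preceq e$; by antisymmetry of the canonical preorder (Theorem~\ref{bsb}) this forces $\sum_{j \in J} e = e$. In particular $e = \sum_{j \in J} a$ with $a = e$, so $\E(\T) \subseteq \{ \sum_{j \in J} a : a \in \T \}$. For the reverse inclusion, fix $a \in \T$ and put $s := \sum_{j \in J} a$. The cardinal-algebra postulate gives $a + s = s$, so Lemma~\ref{idembtw} supplies an idempotent $f$ with $a \preceq f \preceq s$. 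Applying the first step to $f$ yields $f = \sum_{j \in J} f$, and order-preservation of $\sum_{j \in J}(-)$ turns $a \preceq f$ into $s = \sum_{j \in J} a \preceq \sum_{j \in J} f = f$. Together with $f \preceq s$ and antisymmetry this gives $s = f \in \E(\T)$, completing the proof that $\E(\T) = \{ \sum_{j \in J} a : a \in \T \}$.

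The only point requiring care is the bookkeeping with infinite sums --- existence, reindexing, monotonicity, and the postulate $a + \sum_{j \in J} a = \sum_{j \in J} a$ --- all of which are immediate from Tarski's axioms for a cardinal algebra~\cite{tarski1949}, a structure $\T$ was already noted to carry. The one genuinely load-bearing convention is that $J$ must be taken infinite: for a finite index set the chain $a \preceq \sum_{j \in J} a \preceq b$ is not equivalent to $a + b = b$, and a finite sum $na$ of a non-idempotent $a$ need not be idempotent, so the statement must be read with countably infinite sums throughout. Beyond this I expect no obstacle; the entire content is carried by Lemma~\ref{idembtw} and Theorem~\ref{idemsum}.
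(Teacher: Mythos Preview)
Your argument is correct and follows precisely the route the paper indicates: the corollary is stated there without explicit proof, merely as a consequence of Lemma~\ref{idembtw} and Theorem~\ref{idemsum}, and you have filled in those details accurately. One remark: for the reverse inclusion you could bypass Lemma~\ref{idembtw} entirely by noting that $s+s=\sum_{j\in J\sqcup J}a=\sum_{j\in J}a=s$ via reindexing, but your detour through the idempotent $f$ is equally valid and arguably closer to the paper's hint.
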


\begin{cor}\label{shortcuts}
	Let $e,f \in \E(\T)$ and $a,b \in \T$.
	\begin{enumerate}[(i)]
		\item $a \preceq f \iff a+f = f$
		
		\item $f \preceq b \iff f+b = b$
		
		\item $e \preceq f \iff e+f = f$
	\end{enumerate}
\end{cor}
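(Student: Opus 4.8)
The plan is to read all three equivalences off Lemma~\ref{idembtw}, which says that $a+b=b$ holds precisely when some idempotent lies weakly between $a$ and $b$. In each of (i), (ii), (iii) one of the two elements involved is already idempotent, so that element itself can serve as the ``sandwiched'' idempotent in one direction, while in the other direction Lemma~\ref{idembtw} hands us an idempotent between the two sides and transitivity of $\preceq$ finishes the job. Nothing deeper than this is needed; the type monoid's cardinal-algebra structure is only used insofar as it already went into Lemma~\ref{idembtw}.

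Concretely, for (i) suppose first $a \preceq f$ with $f \in \E(\T)$. Then $a \preceq f \preceq f$ exhibits an idempotent (namely $f$) between $a$ and $f$, so Lemma~\ref{idembtw} yields $a+f=f$. Conversely, if $a+f=f$ then Lemma~\ref{idembtw} produces some $g \in \E(\T)$ with $a \preceq g \preceq f$, and since $\preceq$ is a partial order (Theorem~\ref{bsb}) transitivity gives $a \preceq f$. Part (ii) is the mirror image: if $f \preceq b$ with $f \in \E(\T)$, then $f \preceq f \preceq b$ and Lemma~\ref{idembtw} gives $f+b=b$; conversely $f+b=b$ produces $g \in \E(\T)$ with $f \preceq g \preceq b$, hence $f \preceq b$. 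Finally, (iii) is simply the instance of (i) with $a=e$ — the extra assumption that $e$ is idempotent is not even required.

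I do not expect a genuine obstacle here; the statement is a bookkeeping corollary of Lemma~\ref{idembtw}. The only points demanding a little care are (a) using the existential quantifier ``for some $f \in \E(\T)$'' in Lemma~\ref{idembtw} in the correct direction in each of the three parts, and (b) making sure the chained inequalities are collapsed via transitivity of the canonical preorder rather than via antisymmetry. Accordingly I would write the argument compactly, essentially in the three-times-two-lines form above, and note explicitly that (iii) needs no separate proof.
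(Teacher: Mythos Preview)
Your proposal is correct and matches the paper's intent: the corollary is stated without proof, immediately after Lemma~\ref{idembtw}, so deriving all three items from that lemma is exactly what is expected. One minor simplification you could make is that the reverse implications in (i)--(iii) are in fact immediate from the definition of the canonical preorder (if $a+f=f$ then $a\preceq f$ by taking $\gamma=f$), so Lemma~\ref{idembtw} is only genuinely needed for the forward directions; but your route through the lemma in both directions is perfectly valid.
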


\begin{prop}\label{idemjoin}
	Given an idempotents $m,e,f \in \E(\T)$ we have that $(e \vee f) \preceq m \iff e \preceq m \mbox{ and } f \preceq m$.
\end{prop}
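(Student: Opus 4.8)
The plan is to reduce everything to the arithmetic shortcuts collected in Corollary~\ref{shortcuts}, after recalling that the join on $\E(\T)$ is literally addition, i.e. $e \vee f := e + f$, and that $e + f$ is again idempotent (by commutativity and the idempotence of $e$ and $f$, $(e+f)+(e+f) = (e+e)+(f+f) = e+f$), so the statement makes sense.

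For the forward implication I would argue purely from the canonical preorder: since $(e+f) = e + f$ we have $e \preceq e+f$ (witnessed by $\gamma = f$) and $f \preceq e+f$ (witnessed by $\gamma = e$), so if $(e \vee f) = e+f \preceq m$ then transitivity of $\preceq$ gives $e \preceq m$ and $f \preceq m$. Note this direction does not even use that $m$ is idempotent.

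For the reverse implication I would use that $m \in \E(\T)$. By Corollary~\ref{shortcuts}(i), $e \preceq m \iff e + m = m$ and $f \preceq m \iff f + m = m$. Then
\[
	(e \vee f) + m = (e + f) + m = e + (f + m) = e + m = m,
\]
so $(e \vee f) + m = m$, which by Corollary~\ref{shortcuts}(i) (applied to the idempotent $m$, or simply by the definition of the canonical preorder with $\gamma = m$) gives $(e \vee f) \preceq m$.

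There is essentially no serious obstacle here; the only thing to be careful about is invoking the right part of Corollary~\ref{shortcuts} (the versions phrased for an idempotent on the appropriate side) and confirming $e \vee f \in \E(\T)$ so that those corollaries apply. I would also remark, to close the loop with Proposition~\ref{idemmeet}, that Propositions~\ref{idemmeet} and~\ref{idemjoin} together show $(\E(\T), \wedge, \vee)$ is a lattice with $\wedge$ and $\vee$ the meet and join for $\preceq$; distributivity would then be handled separately.
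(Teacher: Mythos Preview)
Your proof is correct and follows essentially the same approach as the paper's: the forward direction is immediate from the definition of the canonical preorder, and the reverse direction uses $e+m=m$ and $f+m=m$ (the paper cites Lemma~\ref{idembtw} directly rather than Corollary~\ref{shortcuts}, but these are equivalent here) to compute $(e+f)+m = m$. Your additional remarks verifying $e\vee f \in \E(\T)$ and spelling out the forward direction are more explicit than the paper but not substantively different.
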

\begin{proof}
	Again, the forward implication follows trivially from definitions.  Now assume $e \preceq m$ and $f \preceq m$ for some idempotent $m$. Then by Proposition~\ref{idembtw} we have $e+m = m = f+m$ and in particular, $e \vee f \preceq m$ since $e \vee f + m = e+f+m = m$.
\end{proof}

It is clear that the idempotent $0 = [\varnothing]$ serves as bottom element in $\E(\T)$ e.g. $\bot := 0$.  The top element is given by the countable-fold sum of $[X]$'s.  That is, $\top := \coprod\limits_{j \in J} [X]$ where $J$ is a countable index set.  That $\top$ is in fact an idempotent of $\T(X,S,\A)$ is immediate from Corollary~\ref{idemsum2}.

\begin{prop}
	The lattice of idempotents $\E(\T)$ is distributive. Moreover, $\E(\T)$ is a countably complete lattice and distributes over countable meets and joins as a consequence of $\overline{\A}$'s completeness properties.
\end{prop}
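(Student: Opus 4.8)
The plan is to observe first that the lattice structure is essentially already established, so that only distributivity and the countable claims remain. The discussion preceding Proposition~\ref{idemmeet} shows that the maximum defining $e \wedge f$ exists and is idempotent; Proposition~\ref{idemmeet} then says $e \wedge f$ is the greatest lower bound of $\{e,f\}$ in $(\E(\T),\preceq)$, and Proposition~\ref{idemjoin} says $e \vee f := e+f$ is the least upper bound. Since any poset in which every pair has a glb and a lub is automatically a lattice---absorption, idempotence, commutativity and associativity of $\wedge,\vee$ all follow formally---we get a bounded lattice with $\bot = 0 = [\varnothing]$ and $\top = \coprod_{j\in J}[X]$, the latter idempotent by Corollary~\ref{idemsum2}. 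It remains to verify distributivity, countable completeness, and the infinitary distributive laws.

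For finite distributivity I would prove $e \wedge (f \vee g) \preceq (e\wedge f)\vee(e\wedge g)$ directly; the reverse inequality is formal, since $e\wedge f \preceq e\wedge(f\vee g)$ and $e\wedge g \preceq e\wedge(f\vee g)$. Write $f=[F]$, $g=[G]$, so $f\vee g = [F\coprod G]$, and let $E\cap H$ be a representative realizing the maximum defining $e\wedge(f\vee g)$, so $[E]=e$ and $[H]=[F\coprod G]$. By Remark~\ref{bi} we may take $H\eqsim_S F\coprod G$ via a bijective realization $\tau$; setting $H_1 := \tau^{-1}(F)$ and $H_2 := \tau^{-1}(G)$, the refining property of Lemma~\ref{consist} gives $H_1\eqsim_S F$ and $H_2\eqsim_S G$, while $H = H_1\coprod H_2$. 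Hence $E\cap H = (E\cap H_1)\coprod(E\cap H_2)$ and $[E\cap H] = [E\cap H_1]+[E\cap H_2]$. Since $E$ represents $e$ and $H_1$ represents $f$, the quantity $[E\cap H_1]$ is one of those over which the maximum defining $e\wedge f$ is taken, so $[E\cap H_1]\preceq e\wedge f$, and likewise $[E\cap H_2]\preceq e\wedge g$; by monotonicity of addition (immediate from commutativity) we conclude $e\wedge(f\vee g)=[E\cap H]\preceq (e\wedge f)+(e\wedge g)=(e\wedge f)\vee(e\wedge g)$. The second distributive law then follows formally. Running the identical argument with a countable coproduct $\coprod_j F_j$ in place of $F\coprod G$ proves the infinitary law $e\wedge\bigvee_j f_j = \bigvee_j(e\wedge f_j)$, once countable joins are available.

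For countable completeness, countable joins are easy: for idempotents $\{e_j\}_{j\in J}$ with $J$ countable, set $\bigvee_j e_j := \sum_j e_j$ (defined since $\overline{\A}$ has countable coproducts). It is idempotent because, regrouping $\sum_j e_j + \sum_j e_j$ over $J\sqcup J\cong J$ and using $e_j+e_j=e_j$---legitimate since $\T$ is a cardinal algebra---it equals $\sum_j e_j$; it dominates each $e_k$; and if an idempotent $m$ dominates every $e_j$, then writing $m = \sum_j m$ (Corollary~\ref{idemsum2}) and $e_j+m=m$ (Corollary~\ref{shortcuts}) gives $\sum_j e_j + m = \sum_j(e_j+m)=\sum_j m = m$, so $\sum_j e_j \preceq m$ by Corollary~\ref{shortcuts} again. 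Countable meets are the evident generalization of the binary construction, $\bigwedge_j e_j := \max\{\,[\bigcap_j E_j] : [E_j]=e_j\,\}$, with the countable intersection formed inside $\overline{\A}$; that this is a lower bound and indeed the greatest one is proved exactly as in Proposition~\ref{idemmeet}. The remaining infinitary distributive law $e\vee\bigwedge_j f_j = \bigwedge_j(e\vee f_j)$ then comes from the corresponding infinitary distributive identity for intersections over coproducts, which holds on the nose in $\overline{\A}$ because $\overline{\A}$ is (a $\sigma$-complete field of sets, hence) concretely distributive, and which descends to $\E(\T)$ upon passing to maximal representatives.

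The main obstacle is the well-definedness of the countable meet: one must check that the intersections $\bigcap_j E_j$ genuinely lie in $\overline{\A}$---that is, that they remain \emph{countable} coproducts of measurable sets rather than swelling into uncountable ones---and that the maximum over all realizers is attained in the countable setting, which forces the combine-the-realizers argument of the binary case to be re-run with some care. This is precisely where the $\sigma$-completeness (and concreteness) of $\overline{\A}$ is indispensable; one should also pin down exactly how the intersection of two objects of $\overline{\A}$ is to be read, since $\overline{\A}$ was only built as a category closed under countable coproducts. Everything else is a routine transcription of the binary arguments.
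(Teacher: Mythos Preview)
Your argument is correct and tracks the paper's proof closely. For the key inequality $e \wedge (f \vee g) \preceq (e \wedge f) \vee (e \wedge g)$ both proofs pick a realizer of the left-hand meet and split the representative of $f \vee g$ into a piece for $f$ and a piece for $g$, then bound each intersection by the corresponding binary meet. The tactical difference is that the paper exploits idempotence of $e \wedge (f \vee g)$ to double the realizer and rearrange coproducts---arriving at $[(E \cap F) \coprod (E \cap G)] = [E\cap F] + [E\cap G]$---whereas you pull the split back through a bijective equidecomposition $\tau$ via Lemma~\ref{consist}. Your route is arguably cleaner, since it makes explicit why a maximizing representative $H$ of $f+g$ decomposes as $H_1 \coprod H_2$ with $[H_1]=f$, $[H_2]=g$; the paper simply writes the realizer as $F \coprod G$ from the outset. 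Conversely, the paper's doubling trick has the virtue of staying entirely inside the idempotent arithmetic and the coproduct calculus of $\overline{\A}$, without invoking Remark~\ref{bi} or the refining property.

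For the easy direction and the countable claims you give considerably more detail than the paper, which simply records that $(e\wedge f)\vee(e\wedge g)\preceq e\wedge(f\vee g)$ follows from Propositions~\ref{idemmeet} and~\ref{idemjoin} and then asserts the countable statement as a consequence of completeness in $\overline{\A}$. Your discussion of countable joins via $\sum_j e_j$ and countable meets via maximal intersections is the natural elaboration, and your flagged obstacle---that a naive ``componentwise'' intersection of countably many objects of $\overline{\A}$ could produce an uncountable coproduct---is exactly the point one must pin down to make the paper's assertion precise.
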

\begin{proof}
By lattice duality we need only prove one of the distributive laws holds.  We prove the second. By Proposition~\ref{idemjoin}, $(e \wedge f) \vee (e \wedge g) \preceq (e \wedge (f+g)) \iff (e \wedge f)+(e \wedge g) \preceq e$ and $(e \wedge f)+(e \wedge g) \preceq (f+g)$.
By definition of the meet or Proposition~\ref{idemmeet}, we have that the meet of two idempotents is less than or equal to either of the individual idempotents. Consequently, the first condition is met because:
$$(e \wedge f)+(e \wedge g) \preceq e + e = e$$
and the second condition holds since:
$$(e \wedge f)+(e \wedge g) \preceq (f + g) $$
Therefore $e \wedge (f \vee g) \succeq (e \wedge f) \vee (e \wedge g)$.

Now we must show that $e \wedge (f \vee g) \preceq (e \wedge f) \vee (e \wedge g)$. Let $E,F,G \in \overline{\A}$ realize the meet $(e \wedge (f \vee g))$.  Then we have:
$$\begin{array}{lclcl}
	  (e \wedge (f \vee g))
	& =
	& \Big[E \cap (F \coprod G)\Big]
	& 
	& 
	\\[8pt]
	  
	& =
	& \Big[E \cap (F \coprod G)\Big] + \Big[E \cap (F \coprod G)\Big]
	& 
	& 
	\\[8pt]
	  
	& =
	& \Big[(E \cap (F \coprod G)) \coprod (E \cap (F \coprod G))\Big]
	& 
	& 
	\\[8pt]
	  
	& =
	& \Big[(E \coprod E) \cap ((F \coprod G) \coprod (F \coprod G))\Big]
	& 
	& 
	\\[8pt]
	  
	& =
	& \Big[(E \coprod E) \cap (F \coprod G)\Big]
	& 
	& 
	\\[8pt]
	  
	& =
	& \Big[(E \cap F) \coprod (E \cap G)\Big]
	& 
	& 
	\\[8pt]
	  
	& =
	& \Big[E \cap F\Big] + \Big[E \cap G\Big]
	& \preceq
	& (e \wedge f) + (e \wedge g)
	\\
\end{array}$$
by definition since the meet is maximal over such intersections.
\end{proof}

Section~\ref{embedtypemon}'s formalism, growing out of this analysis, yields a simple description of the general form of a type monoid: A type monoid $\T$ is a distributive lattice of `scales' $\T_e$ parametrized by the idempotents $e \in \E(\T)$. Each scale is (partially) infinite with respect to scales below and (partially) infinitesimal with respect to scales above. From this we are able to prove the Embedding Theorem by formally adjoining inverses within each scale. The following simple example provides a useful illustration of the scale structures present in type monoids.

\begin{ex}
	Recall the stationarily measurable space $(X,S,\A) = (\Z,\Z^2,2^\Z)$ from Example~\ref{evenodd} where $S$ acts on $X$ by:
	$$
	(p,q)(n):=
		\left\{
		\begin{array}{ll}
			  n + 2p
			& : n \mbox{ even}
			\\
			  n + 2q
			& : n \mbox{ odd}
		\end{array}
		\right.
	$$
	e.g. $X$ consists of two $S$-orbits: the even numbers and the odd numbers. Since $X = \Z$ is countable we have that the countable-fold coproduct of subsets of $X$ is also countable.  Consequently, the type of a given set $P$ is particularly simple, returning the sum of types: 
	$$[P] = (\mbox{\# of evens in } P) \cdot [\{0\}] + (\mbox{\# of odds in } P) \cdot [\{1\}]$$
	Whence, we see that this space's type monoid $\T$ is isomorphic to $\overline{\N}^2 = \{ (x,y): x,y \in \N \sqcup \{ \infty \} \}$. As a category with arrows encoding the order relation:
	$$\begin{tikzcd}[column sep = small, row sep = small]
	&&   (\infty, \infty)	\ar[dashleftarrow]{dl}
							\ar[dashleftarrow]{dr}
	\\
	&    (\infty,1)			\ar[leftarrow]{dl}
							\ar[dashleftarrow]{ddr}
	&&   (1, \infty)		\ar[leftarrow]{dr}
							\ar[dashleftarrow]{ddl}
	\\
	     (\infty, 0)		\ar[dashleftarrow]{ddr}
	&&&& (0,\infty)			\ar[dashleftarrow]{ddl}
	\\
	&&   (1,1)				\ar[leftarrow]{dl}
							\ar[leftarrow]{dr}
	\\
	&    (1,0)				\ar[leftarrow]{dr}
	&&   (0,1)				\ar[leftarrow]{dl}
	\\
	&&   (0,0)
	\\
	\end{tikzcd}$$
	And associated to $\T$ we have the distributive lattice of scales given by:
	$$\begin{array}{ccc}
	\begin{tikzcd}[column sep = small, row sep = small]
		&  \T_{(\infty, \infty)}	\ar[leftarrow]{dl}
									\ar[leftarrow]{dr}
		\\
		   \T_{(\infty, 0)}			\ar[leftarrow]{dr}
		&& \T_{(0, \infty)}			\ar[leftarrow]{dl}
		\\
		&  \T_{(0, 0)}
	\end{tikzcd}
	& \cong &
	\begin{tikzcd}[column sep = small, row sep = small]
		&  \textbf{0}	\ar[leftarrow]{dl}
						\ar[leftarrow]{dr}
		\\
		   \N			\ar[leftarrow]{dr}
		&& \N			\ar[leftarrow]{dl}
		\\
		&  \N^2
	\end{tikzcd}
	\end{array}$$
\end{ex}

It is good to keep an example such as this in mind when thinking about the type monoid.  Already in this simple context we see that the partial order should not be expected to be a total order. Additionally, we gain intuition about the role of idempotents in addition as partially absorbing elements. Moreover, it is clear that type monoids do not embed into groups though the individual scales appear to. The following Section carries out the decomposition in the example above in general for any type monoid.

\subsection{Embedding the Type Monoid into an Inverse Semigroup}\label{embedtypemon}

An interesting takeaway from Lemma \ref{idembtw} is that we should not expect the collection of non-cancellative elements of a type monoid to coincide with its idempotents and thus emphatically, type monoids do not embed into groups.  However, $\T$ does decompose into a collection of disjoint cancellative monoids.

\begin{defn}
	The \emph{up set} of an element $a \in \T$ is the collection of elements greater than or equal to $a$;
	$$ \uparrow_a \,:= \{ b \in \T: a \preceq b \}$$
	For $e \in \E(\T)$ we define the \emph{isotropy monoid} of $e$:
	\begin{equation}\begin{array}{lcl}
		  \T_e
		& := 
		& \uparrow_e \setminus \Big( \bigcup \{ \uparrow_f : e \preceq f \in \E(\T) \} \Big)
	\end{array}\end{equation}
\end{defn}

\begin{prop}\label{isomon}
	$\T_e$ is a cancellative monoid with unit $e$. Moreover, every $a \in \T$ belongs to exactly one $\T_e$.
\end{prop}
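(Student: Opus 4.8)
The plan is to establish the two assertions separately: first that each $\T_e$ is a cancellative monoid with unit $e$, and then that the family $\{\T_e : e \in \E(\T)\}$ partitions $\T$. For the monoid structure, I would begin by showing $\T_e$ is closed under $+$ and that $e$ acts as a unit on it. Take $a, b \in \T_e$. Since $e \preceq a$ and $e \preceq b$, Corollary~\ref{shortcuts}(ii) gives $e + a = a$, so $e + (a+b) = a+b$ and hence $e \preceq a+b$, i.e.\ $a+b \in \,\uparrow_e$. More importantly I must check $a+b$ is not pushed above $e$ into some strictly larger idempotent: if $e \prec g \in \E(\T)$ and $g \preceq a+b$, I want a contradiction with $a \in \T_e$. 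The key arithmetic fact needed is that $a \in \T_e$ forces $a \preceq$ (some idempotent just above $e$) to fail; concretely, $a \in\, \uparrow_e$ but $a \notin\, \uparrow_f$ for every idempotent $f \succ e$. Using Lemma~\ref{idembtw}, $g \preceq a + b$ with $g$ idempotent is equivalent to $g + (a+b) = a+b$; combined with $e \preceq a$ one derives $g \preceq a$ or at least that the least idempotent above $a$ dominates $g$, contradicting minimality of $e$ among idempotents below $a$. For the unit property, $e + a = a$ for $a \in \T_e$ is exactly Corollary~\ref{shortcuts}(ii). Associativity and commutativity are inherited from $\T$.

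For cancellativity, suppose $a + c = b + c$ with $a, b, c \in \T_e$. The natural move is to invoke $\N$-Cancellation (Theorem~\ref{ncan}) after manufacturing an equation of the form $n\alpha = n\beta$, but a cleaner route uses that $c$ lies in no up-set of an idempotent strictly above $e$: I claim $c$ is a cancellative element of $\T$. Indeed, if $c$ were non-cancellative, then by Corollary~\ref{idemimport} applied suitably — or rather by the contrapositive reasoning around Lemma~\ref{idembtw} — one shows $c$ would have to dominate a nonzero idempotent $f$ with $f \succ e$ possible, forcing $c \notin \T_e$. Here the real work is Lemma~\ref{idembtw}: if $a + c = b + c$ and, say, $a \not\preceq b$, symmetrize to get (after using Schröder–Bernstein, Theorem~\ref{bsb}, to rule out $a \prec b$ and $b \prec a$ simultaneously) that the "defect" generates an idempotent sitting strictly between $e$ and something $\preceq c$, contradicting $c \in \T_e$. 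I expect this to be the main obstacle: pinning down precisely why membership in $\T_e$ rules out the idempotent obstructions to cancellation, and this is where Lemma~\ref{idembtw} (characterizing $a \preceq f \preceq b \iff a+b=b$) does the heavy lifting.

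For the partition claim, I must show every $a \in \T$ lies in exactly one $\T_e$. Existence: given $a$, consider the set $\{f \in \E(\T) : f \preceq a\}$ of idempotents below $a$; this is nonempty ($0$ is in it) and, by the countable completeness of $\E(\T)$ established in the preceding Proposition, has a join $e_a := \bigvee\{f \in \E(\T): f \preceq a\}$. By distributivity over countable joins, $e_a \preceq a$, and $e_a$ is the largest idempotent $\preceq a$. Then $a \in \,\uparrow_{e_a}$, and $a \notin \,\uparrow_g$ for $g \succ e_a$ idempotent, since $g \preceq a$ would contradict maximality of $e_a$; hence $a \in \T_{e_a}$. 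Uniqueness: if $a \in \T_e \cap \T_{e'}$ then $e, e' \preceq a$ are both idempotents, so $e \vee e' \preceq a$ is an idempotent below $a$; were $e \ne e'$, say $e \prec e \vee e'$, then $e \vee e' \in \E(\T)$ with $e \prec e \vee e'$ and $e \vee e' \preceq a$ contradicts $a \in \T_e$ (which forbids $a \in\, \uparrow_f$ for idempotent $f \succ e$). So $e = e' = e \vee e'$, proving uniqueness. The only subtlety is ensuring the join over a possibly infinite set of idempotents stays below $a$ and remains idempotent, which is exactly what the countable completeness and distributivity of $\E(\T)$ from the previous proposition provide.
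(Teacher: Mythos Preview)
Your partition argument (take the maximal idempotent below $a$; uniqueness via the join) matches the paper's, only written out more carefully; the paper records it in a single line as $e := \max\{f \in \E(\T): f \preceq a\}$. Your monoid-closure sketch correctly isolates the real obstacle---ruling out an idempotent $g \succ e$ with $g \preceq a+b$---but contains a slip (you mean the \emph{maximality} of $e$ among idempotents below $a$, not minimality), and the assertion that ``one derives $g \preceq a$'' is not justified. The paper's treatment of this step is equally terse.

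The substantive divergence is cancellativity. You attempt a direct attack: if $c \in \T_e$ failed to cancel, use Lemma~\ref{idembtw} or Corollary~\ref{idemimport} to locate an idempotent strictly above $e$ lying below $c$, contradicting $c \in \T_e$. But you never actually extract such an idempotent from the bare equation $a+c=b+c$ with $a\neq b$; Corollary~\ref{idemimport} concerns inequalities of the form $(n+1)\alpha \preceq n\alpha$, which $a+c=b+c$ does not furnish, and the Schr\"oder--Bernstein remark does not close the gap. The paper instead runs a minimal-counterexample descent: choose $c \in \T_e$ minimal among noncancellative elements; for any $x \in \T_e$ with $x \preceq c$ write $c = x+y$; if $y \in \T_e$ then $y$ and $x$, being strictly below $c$, are cancellative by minimality, and cancelling $y$ then $x$ in $(a+x)+y=(b+x)+y$ forces $a=b$; hence $y \notin \T_e$, from which the paper concludes $y \preceq e$ and so $x = x+y = c$. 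Thus $c$ is the minimum of $\T_e$, i.e.\ $c=e$, which as the identity is cancellative---contradiction. This descent sidesteps the need to manufacture an idempotent directly from $a+c=b+c$, which is exactly where your sketch stalls.
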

\begin{proof}
	First observe that $\T_e$ is a monoid as for $a,b \in \T_e$ we have:
	\begin{enumerate}[(i)]
		\item $a+e = a$ by Corollary~\ref{shortcuts}
		\item $a+b \in \T_e$ since for $f \in \E(\T)$ we have $a,b \preceq f \implies a+b \preceq f$
	\end{enumerate}
	Now let's establish cancellativity of the isotropy monoid $\T_e$.  Choose a minimal noncancellative element $c \in \T_e$.  By definition;
	$$ \exists a \neq b \in \T_e \mbox{ such that } a+c = b+c $$
	Suppose for some $x \in \T_e$, $x \preceq c$. Then there is a $y \in \T$ such that $x+y = c$.  Because $c$ is not cancellative it can not be that $y \in \T_e$ for otherwise by minimality of $c$,
	$$ (a+x)+y = (b+x)+y \implies a+x = b+x \implies a = b $$
	which is a contradiction.  Thus, $y \preceq e$ and consequently $x = x+y = c$.  Therefore, $c$ is minimal in $\T_e$ e.g. $c = e$.  But $e$ being the identity, is cancellative in $\T_e$.
	Finally, note that $\T = \coprod\limits_{e \in \E(\T)} \T_e$ since for $a \in \T \implies a \in \T_e$ for $e := \max\{ e \in \E(\T): e \preceq a \}$.
\end{proof}

\begin{defn}
	Given $e \in \E(\T)$ define $\Q_e$ the \emph{isotropy group of }$e$, to be the Grothendieck group of $\T_e$.
\end{defn}
Because the isotropy monoids $\T_e$ are commutative and cancellative, $\T_e$ embeds (as the positive cone) into the abelian group $\Q_e$ which is thus congruent to $\T_e \cup -\T_e$.  The Embedding theorem follows easily from this observation. We define the \emph{quantity space of} $(X,S,\A)$ to be $\Q := \bigsqcup\limits_{e \in \E(\T)} \Q_e$.  With this, we are now able to present the proof of The Embedding Theorem.

\begin{proof}[Proof of Theorem \ref{emtheo}]
	By Proposition~\ref{isomon}, $\T$ is a distributive lattice of cancellative monoids.\footnote{Actually, the theorem follows from this observation by Theorem 7.52 of Clifford and Preston's text \emph{The Algebraic Theory of Semigroups Vol. II}~\cite{clifford1967}.} Individually, each $\T_e$ embeds into $\Q_e$. Together this collection $\Q = \bigsqcup \Q_e$ is naturally an inverse monoid under the following addition operation.  Given $a \in \Q_e$ and $b \in Q_f$,
	\begin{equation}
		a+b = (a+e) + (b+f) = (a+f) + (b+e) \in \Q_{(e \vee f)}
	\end{equation}
	That this operation is commutative, and extends the addition operation of $\T$ is obvious.  As $(\Q,+)$ is commutative and possesses weak inverses: namely, $-a$ for any $a \in \Q$, we have that $\Q$ is an inverse monoid.
\end{proof}

\subsection{Limits in the Type Monoid}
Limits are ubiquitous.  Section~\ref{catlimits} gives some examples of categorical limits. Within the context of type monoids we will chiefly be concerned with $\omega$\emph{-directed colimits}.  These are the limits of increasing sequences.  They capture inductive approximation.  The purpose of this Section is to prove that inductive approximation by measurable sets commutes with measurement e.g. the type of the limit is the limit of the types of the sets in the sequence. This notion allows us to prove continuity from above and below as given by Theorem~\ref{basics} in their most general forms.

In $\overline{\A}$ as in $\Set$, a colimit over the ordinal $\omega = \{ 0 \to 1 \to 2 \to \cdots \}$ or an $\omega$\emph{-directed colimit} specifies a nested sequence of (measurable) sets and a limit, $\varinjlim A_n = \bigcup A_n$ together with inclusion maps from each of the $A_i$.  That this union is a categorical limit is a consequence of it being the `smallest' object into which ALL of the $A_i$ include.  This is a universal property and is made precise by the following:

Given a functor $F: \omega \to \overline{\A}$ a colimit of $F$ consists of an object $\varinjlim F =: \varinjlim A_n \in \overline{\A}$ and an inductive cone (of inclusion maps) $\iota_i: F(i)=:A_i \to \varinjlim F$ which is universal in the sense that for any other object $\Theta \in \overline{\A}$ such that a commutative diagram of this type (called a cone) exists
\begin{equation}\begin{tikzcd}[column sep = small]
		  \cdots A_i		\ar{rr}[description]{\cdots}{f_{ik}}
							\ar{dr}[swap]{\theta_i}
		&
		& A_k \cdots		\ar{dl}{\theta_k}
		\\
		& \Theta
\end{tikzcd}\end{equation}
we have a unique map $u: \varinjlim A_n \to \Theta$ such that $\theta_i = u \circ \iota_i$ for all $i$.\footnote{For the detailed exposition of this concept see Appendix~\ref{cats}} In diagrams, it is the case that;
\begin{equation}\begin{tikzcd}[column sep = small]
		  \cdots A_i			\ar{rr}[description]{\cdots}{f_{ik}}
								\ar{dr}[swap]{\iota_i}
								\ar[bend right]{ddr}[swap]{\theta_i}
		&
		& A_k \cdots			\ar{dl}{\iota_k}
								\ar[bend left]{ddl}{\theta_k}
		\\
		& \varinjlim A_n		\ar{d}{u}
		\\
		& \Theta
\end{tikzcd}\end{equation}

As in classical measure theory, we'd like to be able to switch between using sequences of measurable sets and their associated sequences of types.  In this setting, that corresponds to the quotient $\overline{\A} \mapsto \overline{\A} \big/ \eqsim_S$, preserving $\omega$-directed colimits.
\begin{prop}\label{limcom}
	The $\omega$-directed colimit in $\overline{\A}$ commutes with the type:
	\begin{equation}
		[\varinjlim A_n] = \varinjlim [A_n]
	\end{equation}
	and hence Tarski measure.
\end{prop}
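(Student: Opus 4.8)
\emph{Proof proposal.} The plan is to reduce the statement to a purely order-theoretic fact about the cardinal algebra $\T$: that a countable sum is the supremum of its finite partial sums. First I would recall that an $\omega$-directed colimit in a poset, regarded as a thin category, is exactly the supremum of the corresponding increasing chain. Since $(\T,\preceq)$ is a partial order by Theorem~\ref{bsb}, proving $[\varinjlim A_n] = \varinjlim [A_n]$ amounts to showing that $[\bigcup_n A_n]$ is the least upper bound of the increasing sequence $[A_1] \preceq [A_2] \preceq \cdots$ in $\T$; verifying this simultaneously establishes existence of the colimit on the type side and the claimed equality.

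Next I would put the colimit object into a convenient normal form. Because the structure maps $A_n \to A_{n+1}$ are inclusions and $\overline{\A}$ is closed under the relevant countable operations, Lemma~\ref{rep} should let us write $\varinjlim A_n = \bigcup_n A_n = \coprod_{n \in \N} D_n$ in $\overline{\A}$, where $D_1 := A_1$ and $D_{n+1}$ is the portion of $A_{n+1}$ disjoint from $A_n$, so that $A_n = \coprod_{k=1}^{n} D_k$ up to $\eqsim_S$. Countable additivity of Tarski measure (Section~\ref{tarskimeas}) then gives $[\bigcup_n A_n] = \sum_{n \in \N}[D_n]$ and $[A_n] = \sum_{k=1}^{n}[D_k]$; that is, the types of the $A_n$ are precisely the finite partial sums of the countable sum $\sum_n [D_n]$.

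It then remains to check that $\sum_{n}[D_n]$ is the supremum of these partial sums. One direction is immediate: for each $n$ we have $\sum_{n}[D_n] = \sum_{k \le n}[D_k] + \sum_{k > n}[D_k] \succeq \sum_{k \le n}[D_k] = [A_n]$, so $[\bigcup_n A_n]$ is an upper bound. For the reverse — that any $\Theta \in \T$ with $\Theta \succeq [A_n]$ for all $n$ satisfies $\Theta \succeq [\bigcup_n A_n]$ — I would appeal to the cardinal algebra structure: $\T$ is a cardinal algebra in the sense of Tarski~\cite{tarski1949} (already invoked for Theorem~\ref{idemsum}), and a basic consequence of those axioms is that a countably infinite sum is the least upper bound of the set of its finite partial sums. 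Applying this to $\sum_n [D_n]$ with partial sums $[A_n]$ closes the argument. The final clause, ``and hence Tarski measure'', is then immediate: for an increasing sequence $A_1 \subseteq A_2 \subseteq \cdots$ in $\A$ itself, the colimit $\bigcup_n A_n$ lies in $\A \subseteq \overline{\A}$, so $m\big(\bigcup_n A_n\big) = [\bigcup_n A_n] = \varinjlim[A_n] = \varinjlim m(A_n)$.

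I expect the main obstacle to be the second step: producing the coproduct normal form $\bigcup_n A_n = \coprod_n D_n$ \emph{inside} $\overline{\A}$ rather than merely in $\Set$, since this requires knowing that the ``difference'' objects $D_{n+1}$ genuinely lie in $\overline{\A}$ — which in turn means choosing a compatible system of the representations furnished by Lemma~\ref{rep}, passing to a common countable refinement of the $A_n$ — and that the evident map $\coprod_n D_n \to \bigcup_n A_n$ is the colimit cocone. Once the diagram is in this normal form the rest is bookkeeping together with the cited cardinal-algebra fact. A fallback for the least-upper-bound step, should one wish to avoid quoting that fact, is a Schr\"oder--Bernstein-style argument: use Remark~\ref{bi} to replace each relation $[A_n] \preceq \Theta$ by an injective realization of $A_n$ onto a subobject of a fixed representative $T$ of $\Theta$, then glue these realizations along the chain to realize $\bigcup_n A_n \eqsim_S$ a subobject of $T$; but the gluing is more delicate, so the cardinal-algebra route is preferable.
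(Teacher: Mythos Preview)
Your proposal is correct and follows essentially the same route as the paper: disjointify via $D_n := A_n \setminus A_{n-1}$ so that $[\bigcup_n A_n] = \sum_n [D_n]$ with finite partial sums $[A_n]$, and then argue that this countable sum is the supremum of the chain $([A_n])_n$ in $\T$. The paper's proof is in fact terser at precisely the step you single out as the crux, passing directly from ``$\sum_{i=1}^n [D_i] = [A_n] \preceq \Theta$ for all $n$'' to ``$[\varinjlim A_n] \preceq \Theta$'' without the cardinal-algebra justification you supply.
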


\begin{proof}
	It is clear that the quotient takes the universal cone of an $\omega$-directed colimit in $\overline{\A}$ to a cone in $\T$.  Explicitly, given an $\omega$-directed colimit $\varinjlim A_n \in \overline{\A}$ passing to equivalence classes we have for $i \leq j \leq k$:
	\begin{equation}\begin{tikzcd}
		  \cdots \left[ A_i \right]		\ar{r}{[f_{ij}]}
		  								\ar{dr}[swap]{[\iota_i]}
		& \left[A_j \right]				\ar{r}{[f_{jk}]}
										\ar{d}{[\iota_j]}
		& \left[ A_k \right] \cdots		\ar{dl}{[\iota_k]}
		\\
		& \left[ \varinjlim A_n \right]
	\end{tikzcd}\end{equation}
	Therefore, it suffices to prove that this cone is universal. Let $\Theta \in \T$ together with morphisms $\theta_n : [A_n] \to \Theta$ be another cone.  We must exhibit a unique morphism $u: \big[ \varinjlim A_n \big] \to \Theta$ such that $\theta_n = u \circ \iota_n$. That is, that $\big[ \varinjlim A_n \big] \preceq \Theta$ but we have
	$$
	\big[ \varinjlim A_n \big]
	=
	\left[ \bigcup A_n \right]
	=
	\left[ \coprod (A_n \setminus A_{n-1}) \right]
	=
	\sum \left[ (A_n \setminus  A_{n-1}) \right]
	\hspace{15pt}\mbox{ where } A_{-1} := \varnothing
	$$
	Consequently, $\sum\limits_1^n [(A_i \setminus A_{i-1})] = [A_n] \preceq \Theta$ for all $n$. And so, $\big[ \varinjlim A_n \big] \preceq \Theta$.
\end{proof}

\begin{prop}\label{limneg}
	Given $e \in \E(\T)$, $\omega^{op}$-directed limits in $\T_e$ are in one to one correspondence with $\omega$-directed colimits in $-\T_e$.  Specifically,
	\begin{equation}
		- \varprojlim [A_j] = \varinjlim \left( -[A_j] \right)
	\end{equation}
\end{prop}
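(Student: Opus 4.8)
The plan is to exhibit negation inside the isotropy group $\Q_e$ as an order-reversing bijection and then appeal to the formal fact that such a map interchanges limits with colimits. First I would fix the poset set-up. By Proposition~\ref{isomon} the isotropy monoid $\T_e$ is commutative and cancellative, so, as in Section~\ref{embedtypemon}, it sits inside its Grothendieck group $\Q_e = \T_e \cup (-\T_e)$ as the positive cone. Give $\Q_e$ its canonical order, $x \preceq y \iff y - x \in \T_e$. This is a genuine partial order: if $x \in \T_e \cap (-\T_e)$, say $x = -x'$ with $x,x' \in \T_e$, then $x + x' = e$ in $\T_e$, and since $e$ is the bottom of $\T_e$ this forces $x \preceq e \preceq x$, hence $x = e$; thus the cone is pointed and $\preceq$ is antisymmetric. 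I would also record that this order restricted to $\T_e$ agrees with the ambient order of $\T$: if $a,b \in \T_e$ and $a + c = b$ for some $c \in \T$, then $c + e \in \T_e$ — it lies below $b \in \T_e$ and above $e$, so its largest idempotent is $e$ by the characterization in the proof of Proposition~\ref{isomon} — and $a + (c+e) = b + e = b$, so $b - a \in \T_e$ already; hence ``$\preceq$'' is unambiguous on both $\T_e$ and $-\T_e$.

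Next I would observe that negation $\nu\colon \Q_e \to \Q_e$, $a \mapsto -a$, is an involution that reverses $\preceq$ (since $-b \preceq -a \iff b - a \in \T_e \iff a \preceq b$) and that it carries $\T_e$ bijectively onto $-\T_e$, fixing $e = 0$. Therefore $\nu$ restricts to an isomorphism of posets $\T_e^{\mathrm{op}} \xrightarrow{\ \sim\ } -\T_e$, where $-\T_e$ carries the order inherited from $\Q_e$. Now a decreasing sequence $[A_0] \succeq [A_1] \succeq \cdots$ in $\T_e$ is exactly an $\omega^{\mathrm{op}}$-diagram there, its limit (when it exists) being the greatest lower bound $\varprojlim[A_j]$, and applying $\nu$ turns it into the increasing sequence $-[A_0] \preceq -[A_1] \preceq \cdots$ in $-\T_e$. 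Viewing $\T_e$ and $-\T_e$ as thin categories, an isomorphism of categories preserves all limits and colimits and matches $\omega^{\mathrm{op}}$-shaped diagrams in $\T_e$ with $\omega$-shaped diagrams in $-\T_e$; hence $\nu$ sends the limit of the first to the colimit of the second, i.e. $-\varprojlim[A_j] = \varinjlim(-[A_j])$. Since $\nu$ is its own inverse, this assignment is a bijection between $\omega^{\mathrm{op}}$-directed limits in $\T_e$ and $\omega$-directed colimits in $-\T_e$, which is the asserted one-to-one correspondence.

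The only genuinely delicate part is the order bookkeeping in the first paragraph: confirming that the greatest lower bound computed inside $\T_e$ is the same notion transported along $\nu$ into $-\T_e$, and that the canonical preorder of $\T$ restricted to $\T_e$ coincides with the Grothendieck-group order. Once that compatibility is secured, the proposition is essentially ``an anti-automorphism of a poset swaps meets with joins'' applied to a monotone sequence — and it is precisely the device that lets one rewrite $\varprojlim m(A_j)$ as a difference when proving continuity from above in Theorem~\ref{basics}.
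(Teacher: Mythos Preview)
Your argument is correct and is essentially the paper's own proof, just with the details filled in: the paper dispatches the proposition in one line by observing that ``$-\T_e$ viewed as a category is precisely $\T_e^{op}$'', which is exactly your statement that negation gives a poset isomorphism $\T_e^{\mathrm{op}} \xrightarrow{\sim} -\T_e$ interchanging limits with colimits. Your additional care about the compatibility of the Grothendieck-group order with the ambient order on $\T$ is not in the paper but is a reasonable check to make explicit.
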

\begin{proof}
	This follows immediately from the observation that $-\T_e$ viewed as a category is precisely $\T_e^{op}$ the opposite category of $\T_e$.
\end{proof}

Having observed the existence of $\omega$-directed colimits in the type monoid and their coincidence with such limits in $\overline{\A}$, we finally have the necessary tools to present a detailed proof of Theorem~\ref{basics}.

\begin{proof}[Proof of Theorem~\ref{basics}]
	The Monotonicity property of Tarski measure follows trivially from the observation that for $A \subseteq B$, $m(B) = m(B \setminus A) + m(A) \succeq m(A)$ by definition.
	
	Next, let $\{ A_j \} \subseteq \A$ be a countable collection of measurable sets. Define $B_1 := A_1$ and $B_j := A_j \setminus \left( \bigcup\limits_{i=1}^{j-1} A_i \right)$ for $j>1$.  Then the $B_j$'s are pairwise disjoint and $\coprod\limits_i B_i = \bigcup\limits_i A_i$ for all $n$.  Hence, by monotonicity:
	\begin{equation}
		m \left( \bigcup\limits_{i=1}^{\infty} A_i \right)
		=
		m \left( \coprod\limits_{i =1}^{\infty} B_i \right)
		=
		\sum\limits_{i=1}^{\infty} m(B_i) \preceq \sum\limits_{i=1}^{\infty} m(A_i).
	\end{equation}
	and thus Tarski measure is Subadditive.
	For Continuity from below let $A_1 \subseteq A_2 \subseteq A_3 \subseteq \cdots $ be an increasing sequence of sets in $\A$ and observe that together with their union $\cup A_i$, this is an $\omega$-directed colimit. By Proposition~\ref{limcom} it follows that:
	\begin{equation}
		m \left( \bigcup\limits_{i=1}^{\infty} A_i \right)
		=
		m \left( \varinjlim A_i \right)
		=
		\varinjlim m(A_i)
	\end{equation}
	
	Finally, suppose $\{ A_i \}_{i=1}^{\infty} \subseteq \A$ is a decreasing sequence of sets: $A_1 \supseteq A_2 \supseteq \cdots$ with the property that there exists $e \in \E(\T)$ and $N \in \N$ such that for all $n \geq N$, $m (A_n) \in \T_e$ the isotropy monoid of $e$.
	
	Define $B_j := A_N \setminus A_j$ for $j > N$ and observe that $B_{N+1} \subseteq B_{N+2} \subseteq \cdots$ with $m(A_N) = m(B_j) + m(A_j)$ for $j > N$ and $\bigcup\limits_{N+1}^\infty B_j = A_N \setminus \left( \bigcap\limits_1^\infty A_j \right)$.
	By continuity from below, then
	\begin{equation}\begin{array}{lclcl}
		  m(A_N)
		& =
		& m \left( \bigcap\limits_1^\infty A_j \right) + \varinjlim m(B_j)
		\\
		  
		& =
		& m \left( \bigcap\limits_1^\infty A_j \right) + \varinjlim m \left(  A_N \setminus A_j \right)
		\\
		  
		& =
		& m \left( \bigcap\limits_1^\infty A_j \right) + \varinjlim \Big( m(A_N) - m(A_j) \Big)
		& =
		& m \left( \bigcap\limits_1^\infty A_j \right) + m(A_N) - \varprojlim m(A_j)
	\end{array}\end{equation}
	by Proposition~\ref{limneg}.  By assumption $m(A_n) \in \T_e$ for all $n \geq N$ consequently operating in the isotropy group $\Q_e$, we may subtract $\big( m(A_N) - \varprojlim m(A_j) \big)$ from both sides of the equation to yield:
	\begin{equation}
		\varprojlim m(A_j) = e + \varprojlim m(A_j) = (m(A_N) - m(A_N)) + \varprojlim m(A_j) = e + m \left( \bigcap\limits_1^\infty A_j \right)
	\end{equation}
	and therefore Tarski measure has the Continuity from above property.\footnote{This is an adaptation of the proof in Folland~\cite{folland1984}}
\end{proof}

\section{Hierarchical Measures \& Minimal $\sigma$-ideals}\label{hierarchical}
Having established the basic properties of Tarski measure, we now turn our attention to a broader perspective in analysing stationary measures taking values in commutative monoids. To that end, we introduce the following definitions.

\begin{defn}
	Given a stationarily measurable space $(X,S,\A)$ and a commutative monoid $(T,+,0)$; a map $\nu: \A \rightarrow T$ is a \emph{stationary T-measure} if;
		\begin{enumerate}
			\item $\nu(\varnothing) = 0$
			
			\item $\nu\left( \bigcup\limits_{i = 1}^{\infty} A_i \right) = \sum\limits_{i=1}^{\infty} \nu(A_{i})$ for any sequence of pairwise disjoint sets $\{A_{i}\}^{\infty}_{i=1} \subseteq \A$.
			
			\item $\nu(A) = \nu(s^{-1}A)$
		\end{enumerate}
	A stationary $T$-measure is said to be \emph{monotone} if the group of units of its image $\nu(\A)$ is trivial. If a stationary $T$-measure has the property that all idempotents in its image are extremal elements then the measure is said to be \emph{aparadoxical}.
\end{defn}

For each $e \in \E(\T)$ there is a canonical aparadoxical monotone stationary measure taking values in the \emph{completion} of the isotropy monoid defined $\overline{\T_e} := \T_e \cup \{ \mbox{minimal upper bounds in } \E(\T) \}$ defined by the map:
\begin{equation}
	m_e(A) :=
	\left\{ \begin{array}{ll}
			[A]+e & : \mbox{ if } [A]+e \in \T_e
			 \\
			\max\{f \in \overline{\T_e} \setminus \T_e: f \preceq [A]+e \} & : \mbox{ else}
	\end{array} \right.
\end{equation}
And associated to this measure we have a $\sigma$-ideal of null sets given by:
\begin{equation}
	\mathcal{N}_e := \{ N \in \A: m_e(A) = e \}
\end{equation}
These measures are called the \emph{hierarchical measures}.  Their $\sigma$-ideals will be referred to as the \emph{minimal $\sigma$-ideals}.  The reason for this terminology is that a given an aparadoxical monotone stationary $T$-measure $\nu$ on $(X,S,\A)$, defines a $\sigma$-homomorphism from $\overline{\T_e} \to T \sqcup \{ \infty \}$ for some unique $e \in \E(\T)$ where $T \sqcup \{ \infty \}$ is $T$ with an adjoined absorbing element $\infty$ provided $T$ does not already have an absorbing element. As a consequence of this we get that the $\sigma$-ideal of $\nu$-null sets must contain $\mathcal{N}_e$.

\begin{prop}\label{extmeas}
	If $\nu$ is an aparadoxical monotone stationary $T$-measure on $(X,S,\A)$ then $\exists !$ monoid morphism $\overline{\nu} : \overline{\T_e} \to T \sqcup \{ \infty \}$ such that $\nu = \overline{\nu} \circ m_e$.
\end{prop}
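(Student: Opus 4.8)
The plan is to route $\nu$ through the universal property of Tarski measure and then through the hierarchical map $m_e$. As a first step, recall that (just as for $\overline{\R}^+$-valued measures) any stationary $T$-measure is constant on $(S,\overline{\A})$-equidecomposability classes, and that by Lemma~\ref{rep} together with countable additivity it extends uniquely from $\A$ to $\overline{\A}$. Hence $\nu$ factors through Tarski measure as $\nu=\hat\nu\circ m$ for a $\sigma$-homomorphism $\hat\nu\colon\T\to T\sqcup\{\infty\}$, given by $\hat\nu([A])=\nu(A)$ on $\A$ and $\hat\nu\bigl(\sum_{j}[P_j]\bigr)=\sum_{j}\nu(P_j)$ in general, the value being the adjoined absorbing element $\infty$ exactly on those types for which the sum fails to exist in $T$ (and using an absorbing element of $T$ itself in place of $\infty$ when $T$ has one); well-definedness and $\sigma$-additivity of $\hat\nu$ pass down from $\nu$, and $\hat\nu$ is unique since $m$ is onto $\T$.

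Next, observe that the hierarchical map itself factors as $m_e=\pi_e\circ m$, where $\pi_e\colon\T\to\overline{\T_e}$ sends $a\mapsto a+e$ when $a+e\in\T_e$ and otherwise to $\max\{f\in\overline{\T_e}\setminus\T_e:f\preceq a+e\}$; this $\pi_e$ is an onto $\sigma$-homomorphism (surjectivity is clear, since every element of $\T_e$ and every minimal upper bound of $\T_e$ is fixed by $\pi_e$). Consequently it suffices to find a $\sigma$-homomorphism $\overline\nu\colon\overline{\T_e}\to T\sqcup\{\infty\}$ with $\overline\nu\circ\pi_e=\hat\nu$, i.e.\ to show that $\hat\nu$ is constant on the fibres of $\pi_e$: given that, $\overline\nu$ exists and is a $\sigma$-homomorphism because $\pi_e$ is a surjective one, it is unique because $\pi_e$ is surjective, and $\overline\nu\circ m_e=\overline\nu\circ\pi_e\circ m=\hat\nu\circ m=\nu$. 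The scale $e$ is chosen as the largest idempotent with $\hat\nu(e)=0$; existence of this maximum follows from the facts that $\hat\nu^{-1}(0)$ is a $\sigma$-ideal of $\T$, that by Corollary~\ref{idemsum2} it therefore contains the idempotent $\sum_{j\in J}a$ with each of its members $a$, and that — using the countable completeness and distributivity of $\E(\T)$ and the completeness properties of $\overline{\A}$ — the join of all these idempotents is again killed by $\hat\nu$.

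It remains to verify that $\hat\nu$ is constant on $\pi_e$-fibres, and this is where the hypotheses on $\nu$ enter. Since $[A]\preceq[X]\preceq\top$ for every $A$, we get $\hat\nu([A])+\hat\nu(\top)=\hat\nu(\top)$, so $\hat\nu(\top)$ is an idempotent absorbing the image; monotonicity makes $0=\hat\nu([\varnothing])$ the unique minimal element of the image, and since $\nu$ is aparadoxical every idempotent of the image is extremal, hence — being $\preceq\hat\nu(\top)$ — equal to $0$ or to $\hat\nu(\top)=:\infty$. By the choice of $e$ and Corollary~\ref{shortcuts}, an idempotent $f$ satisfies $\hat\nu(f)=0$ iff $f\preceq e$, so every idempotent $f\succ e$ — in particular every minimal upper bound of $\T_e$, that is, every element of $\overline{\T_e}\setminus\T_e$ — has $\hat\nu(f)=\infty$. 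Now if $\pi_e(a)=\pi_e(b)$ lies in $\T_e$ then $a+e=b+e$ there, whence $\hat\nu(a)=\hat\nu(a)+\hat\nu(e)=\hat\nu(a+e)=\hat\nu(b+e)=\hat\nu(b)$; and if $\pi_e(a)=\pi_e(b)$ equals a minimal upper bound $f$, then $f\preceq a+e$ with $f$ idempotent $\succ e$, so $\infty=\hat\nu(f)\preceq\hat\nu(a+e)=\hat\nu(a)$, forcing $\hat\nu(a)=\infty=\hat\nu(b)$. The main obstacle is the middle paragraph — confirming that the maximal idempotent $e$ with $\hat\nu(e)=0$ really exists, since $\E(\T)$ is only guaranteed \emph{countably} complete and one must lean on the $\sigma$-ideal structure of the $\nu$-null types and the completeness properties of $\overline{\A}$ to produce the required join; the rest, including the rigorous construction of the factoring $\hat\nu\colon\T\to T\sqcup\{\infty\}$ when $T$ itself lacks countable sums, is routine bookkeeping.
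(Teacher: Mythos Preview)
Your argument follows essentially the same route as the paper's: factor $\nu$ through the type monoid via equidecomposition-invariance and Lemma~\ref{rep}, pick $e$ as the maximal idempotent sent to $0$, and use the aparadoxical monotone hypothesis together with Corollary~\ref{idemsum2} to force every idempotent to land in $\{0,\infty\}$. The paper's proof is terser and simply defines $\overline\nu$ on types directly, whereas you interpose the explicit quotient map $\pi_e\colon\T\to\overline{\T_e}$ and check fibre-constancy; you are also more careful than the paper about uniqueness and about the existence of the maximal idempotent $e$ (the paper simply asserts ``let $e$ be the maximum idempotent such that $\overline\nu(e)=0$'' without further comment), so your caution there is well placed rather than a defect.
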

\begin{proof}
	Assume that $\nu$ is as in the hypothesis. Since measures are invariant with respect to equidecomposition, $\nu$ is well-defined on types representable in $\A$ and we may define $\overline{\nu}([A]) := \nu(A)$ for $[A]$ such that there exists representative $[A] = A' \in \A$.  Let $e$ be the maximum idempotent such that $\overline{\nu}(e)$ is defined and equal to $0$ the identity in $T$.  By Lemma~\ref{rep}, every $A \in \overline{\A}$ is representable as the countable coproduct of sets in $\A$.  Using this, we extend the definition of $\overline{\nu}$ as follows.  Let $[A] = \left[ \coprod A_j \right] \in \T$ such that $A_j \in \A$. Then define
	$$
	\overline{\nu}([A]) := 
	\left\{ 
	\begin{array}{cl}
		\sum \nu(A_j)	& : \mbox{ provided the sum is defined in } T
		\\
		\infty			& : \mbox{ else}
	\end{array}
	\right.
	$$
	By Corollary~\ref{idemsum2}, every idempotent in $\T$ is equal to the countable-fold sum of itself. So because $\nu$ is aparadoxical monotone we have that for any idempotent $f \in \T$, $\overline{\nu}(f) = \sum \overline{\nu}(f) \in \{ 0, \infty \}$.  In particular, $\overline{\nu}: \T_e \to T \sqcup \{ \infty \}$ and restricted to $\A$, we have $\nu = \overline{\nu} \circ m_e$ as desired.
\end{proof}

Theorem~\ref{meascoor} is a direct consequence of Proposition~\ref{extmeas}.  One need only specialize its statement to $T \sqcup \{\infty\} = \overline{\R}^+$ and observe that since $\overline{\R}^+$ contains only extremal idempotents $\{ 0, \infty \}$ and possesses only one unit $\{ 0 \}$; ANY stationary $\overline{\R}^+$-measure is in fact aparadoxical and monotone.

\section{Closing}
We've demonstrated that Tarski's notion of \emph{type} provides an intrinsic measure on any stationarily measurable space. These measures satisfy the basic properties of classical measures in very wide generality.  However, this paper only covers the construction and most basic features of Tarski measure.

Future research directions include studying the representation theory of quantity spaces, the development of a generalized 'signed' theory, integration, differentiation, etc.  It remains to be seen precisely how much of the classical Analysis can be repeated in this setting. Because there are no non-trivial null sets with respect to Tarski measure and hence, no singular functions; we speculate that it should be possible to prove a Fundamental Theorem of Calculus for arbitrary continuous (or even measurable) maps.

Additionally, since type monoids embed into inverse semigroups, there is an endofunctor on $\SM$ given by mapping a stationarily measurable space $(X,S,\A) =: X \mapsto \left( \Q(X), \Q(X), \B_{\Q(X)} \right)$ the quantity space, acting on itself equipped with the Borel $\sigma$-algebra of its specialization topology. The features of this endofunctor are unknown and warrant future study. For example if the isotropy groups are necessarily locally compact with respect to the interval topology, then Quantity spaces would have Haar measures at every scale.  What this would mean for the quantity space of a quantity space is unknown.

We would eventually like to see applications of this work to problems in probability, physics, and dynamical systems.  The possibility of types transitioning between isotropy monoids under iterated mapping is particularly enticing as it may provide a measure-theoretic model for the transition between laminar and turbulent flow. Loosely speaking, we are interested in questions concerning the interactions between scales in systems and suspect that Tarski measure can be a useful formalism in such settings.

\section{Acknowledgements}
I'd like to thank Doctor Stephen Bricher of Linfield College for first fostering my lively distrust in and respect for Measure Theory; Tom LaGatta for his continuing enthusiasm and support throughout the writing of this paper; Joey Hirsch, John Terilla and Yash Jhaveri for helping me clarify my ideas these last several months, the inimitable Chuggs; my friends; my family; and as ever, the men and women of mathematics who laid the foundations upon which this work is built.  Thank you all.

\appendix

\section{Symmetry: Inverse Semigroups}\label{sym}

\subsection{Background}
Human preoccupation with symmetry goes back as far as anyone cares to look.  Only in about the last three centuries have mathematicians attempted to codify it axiomatically. Notable among these endeavours are Lagrange and Galois' work on solvability of polynomial equations, Klein's Erlangen Program~\cite{klein2008} which in large part is responsible for the dictum, ``symmetries ARE groups," and later the work of Veblen and Whitehead in differential geometry generalizing Lie's \emph{pseudogroups}~\cite{lawson1998}.  From there, the centipedal tree of generalizations grows ever more subtly.\footnote{See for example, Ronnie Brown's work on groupoids~\cite{brown1987}, Alex Weinstein's excellent survey of examples~\cite{weinstein1996}, Hollings' work on the Ehresmann-Schein-Namboripad Theorem~\cite{hollings2012}, etc.} Because the type construction naturally extends to an endofunctor on $\SM$ when dealing with inverse monoid actions but not when dealing with group actions, it makes sense to focus on the particular generalization given by the theory of \emph{inverse semigroups} or equivalently of \emph{inductive groupoids}.

Inverse Semigroup Theory developed initially in an effort to generalize the Erlangen Program for application to differential geometry. It was pointed out by Veblen and Whitehead~\cite{veblen1932} that even long before the Erlangen Program was formulated, there were known to be Riemannian geometries whose groups of automorphisms are trivial.  And consequently, already by 1932 there was:
\begin{quotation}
...therefore, a strong tendency among contemporary geometers to seek a generalization of the Erlanger Programm which can replace it as a definition of geometry by means of the group concept.
\end{quotation}
This marked the call for an axiomization of the pseudogroups which naturally arose in differential geometry and general relativity.
\begin{defn}
	A \emph{pseudogroup} $\Gamma$ is a collection of partial homeomorphisms between open subsets of a topological space such that $\Gamma$ is closed under composition and inverses, where we compose $\alpha, \beta \in \Gamma$ only if $\Im(\alpha) = \Dom(\beta)$.
\end{defn}
Following this definition, via partial composition of partial one-to-one transformations of a space leads one to the concept of an \emph{inductive groupoid} as in Ehresmann's work.  In another direction, Wagner and Preston 'completed' this partial composition to an always defined composition by realizing it as a special case of the composition of binary relations and adjoining an empty transformation. In so doing, they introduced \emph{inverse semigroups}.

Insofar as they were each responses to the same problem: that of finding a suitable algebraic framework for the study of systems of partial one-to-one transformations, it is perhaps unsurprising that an equivalence of categories exists between categories of inverse semigroups and of inductive groupoids.  However, it was not until the works of Schein and Nambooripad in the 1970's that the requisite tools to prove such a theorem were available.  Indeed, even then it is only recently in Lawson's 1998 text~\cite{lawson1998} that this theorem (referred to as Ehresmann-Schein-Nambooripad) was first explicitly formulated.

We now turn our attention to presenting the basic definitions and properties of inverse semigroups and presenting, at the close of this Appendix, an example drawn from Lawson~\cite{lawson1998} relating inverse semigroups to \emph{local structures}.

\subsection{Inverse Semigroups}\footnote{We draw extensively from Lawson~\cite{lawson1998} for content on semigroups.  Additional information from Howie~\cite{howie1976} is sometimes used.}
As previously mentioned inverse semigroups were introduced by Wagner and Preston to abstractly identify the structures represented by pseudogroups.  The original definition follows:
	\begin{defn}
		An \emph{inverse semigroup} $(S, \cdot)$ is a semigroup satisfying the conditions:
		\begin{description}
			\item[S is regular:] For every $s \in S$ there is an element $t$ called a \emph{weak inverse} of $s$, such that: $s = sts$ and $t = tst$.
			\item[Idempotents Commute:] If $e, f \in S$ are idempotent then $ef = fe$.
		\end{description}
	\end{defn}
The modern definition soon followed: An inverse semigroup is a regular semigroup with unique weak inverses.
	\begin{thm}
		Suppose $S$ is a regular semigroup.  Then the idempotents of $S$ commute $\iff$ every element $s \in S$ has a unique weak inverse denoted $s^*$.
	\end{thm}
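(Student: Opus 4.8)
The plan is to treat the two implications separately. In both directions everything reduces to elementary manipulations of the weak-inverse relations $sts=s$, $tst=t$ together with the observation that the products $st$ and $ts$ coming from a weak-inverse pair are idempotent; the two directions differ only in which of ``idempotents commute'' and ``weak inverses are unique'' is hypothesis and which is conclusion.

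For the forward direction I would assume idempotents commute, let $t$ and $u$ both be weak inverses of a fixed $s$, and aim to conclude $t=u$. The first step is to record that $ts$, $us$, $st$, $su$ are all idempotent --- each collapses under squaring by a single application of one of the four defining relations. Applying commutativity to the idempotent pair $\{ts,\,us\}$ and simplifying the two sides via $sus=s$ and $sts=s$ gives $ts=us$; the pair $\{st,\,su\}$ gives $st=su$ the same way. Then one computes directly $t=tst=t(st)=t(su)=tsu=(ts)u=(us)u=usu=u$. I expect this direction to be essentially mechanical once the correct four idempotents are isolated.

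For the reverse direction I would first argue that it suffices to show the product of any two idempotents is idempotent: granting that, for idempotents $e,f$ one checks that $fe$ is a weak inverse of $ef$ (a short computation using only $ee=e$, $ff=f$ and the idempotence of $ef$ and $fe$), while $ef$ is trivially a weak inverse of itself, so uniqueness forces $ef=fe$. To prove the key claim, fix idempotents $e,f$ and, by regularity, choose some weak inverse $x$ of $ef$. The crux is that $b:=fxe$ is \emph{also} a weak inverse of $ef$: both identities $(ef)b(ef)=ef$ and $b(ef)b=b$ collapse after inserting $ee=e$, $ff=f$ and using $x$'s relations. Uniqueness then yields $fxe=x$, whence (multiplying on the right by $e$, respectively on the left by $f$) $xe=x$ and $fx=x$. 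Substituting these into $x(ef)x=x$ rewrites the left-hand side as $(xe)(fx)=x^2$, so $x$ is idempotent and therefore its own weak inverse; but the relations defining $x$ say exactly that $ef$ is a weak inverse of $x$, so a final appeal to uniqueness gives $ef=x^*=x$, an idempotent.

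The main obstacle is this reverse implication, and within it the step of recognizing $fxe$ as a second weak inverse of $ef$ and squeezing out the absorption identities $xe=x$, $fx=x$ --- once those are in hand, showing $x$ (hence $ef$) idempotent is a two-line chain of substitutions, and passing from ``products of idempotents are idempotent'' to ``idempotents commute'' is the routine trick recorded above.
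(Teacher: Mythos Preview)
The paper states this theorem in the appendix without proof (it is quoted as a standard background result from inverse semigroup theory, with Lawson~\cite{lawson1998} and Howie~\cite{howie1976} cited as sources), so there is no in-paper argument to compare against. Your proposal is correct and is in fact the classical proof one finds in those references: the forward direction via the four idempotents $ts,us,st,su$ is the standard computation, and in the reverse direction the key device---observing that $fxe$ is a second weak inverse of $ef$, deducing the absorption identities $xe=x=fx$, and then using uniqueness twice to get first $x^2=x$ and then $ef=x$---is exactly the textbook route.
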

	
Under this definition a pseudogroup may be described by an inverse semigroup of partial homeomorphisms of open subsets of a topological space $X$ where we take the composition of $\alpha: \Dom(\alpha) \to X$ and $\beta: \Dom(\beta) \to X$ to be the function $\beta \circ \alpha: \alpha^{-1}(\Dom \beta \cap \Im \alpha) \to X$.  In this manner the idempotents of the pseudogroup are the restriction of the identity map to open subsets of $X$ and the weak inverses are given by inverses on the image of a given map.
	
	Summarized below are the basic properties of the weak inverses.
	
\begin{prop}
	Let $(S, \cdot)$ be an inverse semigroup.
	\begin{enumerate}[(i)]
		\item For any $s \in S$, both $s^* s$ and $s s^*$ are idempotents and $s (s^* s) = s$ and $s (s^* s) = s$.
		
		\item $(s^*)^* = s$ for every $s \in S$.
		
		\item For any idempotent $e \in S$ and any $s \in S$, the element $s^* e s$ is an idempotent.
		
		\item If $e$ is an idempotent of $S$ then $e^* = e$.
		
		\item For all $s_1, s_2,..., s_n \in S$, $(s_1 s_2 \cdots s_n)^* = s^*_n s^*_{n-1} \cdots s^*_1$.
	\end{enumerate}
\end{prop}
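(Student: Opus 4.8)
The plan is to derive every part from just two ingredients: the defining relations $ss^*s = s$ and $s^*ss^* = s^*$, and the fact that in an inverse semigroup weak inverses are \emph{unique} (together with the commutativity of idempotents for parts (iii) and (v)). First, for (i), I would compute directly $(s^*s)(s^*s) = s^*(ss^*s) = s^*s$, and symmetrically $(ss^*)(ss^*) = (ss^*s)s^* = ss^*$, so both products are idempotent; the absorption identities $s(s^*s) = s$ and $(ss^*)s = s$ are then immediate rewrites of $ss^*s = s$. (I would also note in passing that the statement as printed has a typo: the two asserted identities should read $s(s^*s) = s$ and $(ss^*)s = s$.)

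For (ii) and (iv) the engine is uniqueness of weak inverses. The pair of relations $s^*ss^* = s^*$ and $ss^*s = s$ is symmetric in $s$ and $s^*$, so it says precisely that $s$ is \emph{a} weak inverse of $s^*$; since weak inverses are unique, $(s^*)^* = s$. Likewise, an idempotent $e$ satisfies $eee = e$, so $e$ is its own weak inverse, and uniqueness forces $e^* = e$.

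Part (iii) is the place where commutativity of idempotents is used essentially. I would expand $(s^*es)(s^*es) = s^*\,e\,(ss^*)\,e\,s$; since both $e$ and $ss^*$ are idempotent (the latter by (i)), they commute, so this rearranges to $s^*(ss^*)\,e\,e\,s = (s^*ss^*)\,e\,s = s^*es$, giving idempotency. Part (v) I would prove by induction on $n$, the crux being $n = 2$: by uniqueness of weak inverses it suffices to check that $s_2^*s_1^*$ is a weak inverse of $s_1 s_2$. Expanding $(s_1 s_2)(s_2^* s_1^*)(s_1 s_2) = s_1(s_2 s_2^*)(s_1^* s_1)s_2$ and commuting the idempotents $s_2 s_2^*$ and $s_1^* s_1$ yields $s_1(s_1^* s_1)(s_2 s_2^*)s_2 = (s_1 s_1^* s_1)(s_2 s_2^* s_2) = s_1 s_2$, and the dual computation gives $(s_2^* s_1^*)(s_1 s_2)(s_2^* s_1^*) = s_2^* s_1^*$; the inductive step follows by writing $s_1 \cdots s_n = (s_1 \cdots s_{n-1})s_n$ and combining the $n = 2$ case with the induction hypothesis.

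The only real obstacle is bookkeeping in (iii) and (v): the single non-formal point is knowing in advance which products are idempotent so that commutativity may be invoked, and part (i) supplies exactly that. No step should take more than a few lines.
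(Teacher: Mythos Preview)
Your proof is correct and is the standard argument. The paper itself does not prove this proposition: it appears in the appendix as a summary of basic facts about inverse semigroups, with the surrounding text citing Lawson~\cite{lawson1998} and Howie~\cite{howie1976} as sources. So there is nothing to compare against beyond noting that your argument is precisely the textbook one --- direct computation for (i), uniqueness of weak inverses for (ii), (iv), and (v), and commutativity of idempotents for (iii) and the $n=2$ case of (v). Your observation about the typo in (i) is also correct: the two identities should be $s(s^*s)=s$ and $(ss^*)s=s$.
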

	
We take up the common notation for the collection of idempotent elements of a semigroup $S$, defining $\E(S) := \{ e \in S: e^2 = e \}$.  The idempotents $\d_s := s^*s$ and $\r_s := ss^*$ are of special significance and will be referred to respectively as the \emph{domain idempotent} and \emph{codomain idempotent} of $s$.
	
Additionally, inverse semigroups possess a natural partial order given by:
\begin{defn}
	Given an inverse semigroup $(S,\cdot)$ the natural partial order is defined
	$$
		s \leq t \iff s = te \hspace{25pt}\mbox{for some } e \in \E(\T)
	$$
\end{defn}
\noindent The basic results on the partial order are summarized in the following two propositions:
\begin{prop}
	Let $(S,\cdot)$ be an inverse semigroup.  Then the following are equivalent
	\begin{enumerate}[(i)]
		\item $s \leq t$
		\item $s = ft$ for some idempotent $f$
		\item $s^* \leq t^*$
		\item $s = ss^*t$
		\item $s = ts^*s$
	\end{enumerate}
\end{prop}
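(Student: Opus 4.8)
The plan is to prove the five conditions equivalent by first closing the short cycle $(i) \Rightarrow (iv) \Rightarrow (ii) \Rightarrow (i)$, and then obtaining $(iii)$ and $(v)$ as easy consequences --- $(iii)$ by applying the resulting equivalences to the weakly inverted pair $(s^*, t^*)$, and $(v)$ by a direct computation. Recall the defining relation: $s \leq t$ means $s = te$ for some $e \in \E(S)$. The only facts I will use are the absorption identities $s(s^*s) = s$ and $ss^*s = s$, the involution law $(uv)^* = v^*u^*$, the fact that $e^* = e$ and that $s^*es$ is idempotent whenever $e$ is, and --- the workhorse --- commutativity of idempotents.

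First I would show $(i) \Rightarrow (iv)$: writing $s = te$ with $e$ idempotent gives $s^* = et^*$, so $ss^*t = tet^*t$; since $e$ and $t^*t$ are commuting idempotents this equals $t(t^*t)e = te = s$. Next $(iv) \Rightarrow (ii)$ is immediate, taking the idempotent $f := ss^*$. For $(ii) \Rightarrow (i)$, given $s = ft$ with $f$ idempotent, I set $e := t^*ft$, which is idempotent; then $te = tt^*ft = f\,tt^*t = ft = s$, where I have again commuted the idempotents $tt^*$ and $f$ and collapsed with $tt^*t = t$. This establishes $(i) \Leftrightarrow (ii) \Leftrightarrow (iv)$.

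For $(i) \Leftrightarrow (iii)$: from $s = te$ with $e$ idempotent one reads off $s^* = e^*t^* = et^*$, which is precisely form $(ii)$ for the pair $(s^*, t^*)$, hence $s^* \leq t^*$; the reverse implication follows by applying this to $(s^*, t^*)$ together with $(s^*)^* = s$. For $(i) \Leftrightarrow (v)$: the direction $(v) \Rightarrow (i)$ is trivial since $s^*s$ is idempotent, and $(i) \Rightarrow (v)$ is the computation $ts^*s = t(et^*te) = t(t^*t)e = te = s$ after commuting $e$ past $t^*t$.

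None of this is deep --- every step is a rearrangement of a product into a form where an idempotent subfactor can be commuted out and an absorption identity applied. The one step that takes a moment of thought is $(ii) \Rightarrow (i)$, since it requires conjuring the witnessing idempotent $t^*ft$ rather than transforming the given one; but this is exactly part (iii) of the preceding proposition on weak inverses, so there is no real obstacle.
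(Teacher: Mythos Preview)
Your proof is correct. Each implication is verified by the intended mechanism: commute two idempotents, then apply an absorption identity. The only nontrivial step, $(ii) \Rightarrow (i)$ via the conjugated idempotent $e := t^*ft$, is handled cleanly.

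There is nothing to compare against: the paper states this proposition in its appendix as background material drawn from Lawson's text and gives no proof. Your argument is the standard one found in such references; the cycle $(i) \Rightarrow (iv) \Rightarrow (ii) \Rightarrow (i)$ followed by the symmetric deductions for $(iii)$ and $(v)$ is exactly how it is usually done.
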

Two of these properties deserve special comment.  Firstly, the asymmetry in the definition of the partial order--that the idempotent appears on the right--is superficial by property (ii).  Secondly, property (iii) demonstrates that weak inversion does not change the order relation as one might expect.
\begin{prop}
	Let $(S,\cdot)$ be an inverse semigroup.
	\begin{enumerate}[(i)]
		\item The relation '$\leq$' is a partial order on $S$.
		\item For idempotents $e,f \in S$ we have that $e \leq f \iff e = ef = fe$.
		\item If $s \leq t$ and $u \leq v$ then $su \leq tv$.
	\end{enumerate}
\end{prop}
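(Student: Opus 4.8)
The plan is to establish the three claims in sequence, using throughout the equivalent descriptions of $\leq$ from the preceding proposition together with the fact that $\E(S)$ is a commutative subsemigroup of $S$ (so that a product of commuting idempotents is again idempotent).

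First I would prove part (i). Reflexivity is immediate: $s = s(s^*s)$ with $s^*s \in \E(S)$, so $s \leq s$. For transitivity, suppose $s \leq t$ and $t \leq u$; writing $s = te$ and $t = uf$ with $e,f \in \E(S)$, one gets $s = u(fe)$, and $fe \in \E(S)$, so $s \leq u$. Antisymmetry is the crux: given $s = te$ and $t = sf$ with $e,f \in \E(S)$, substitute the first into the second to get $t = t(ef)$, then substitute this back into $s = te$ to obtain $s = t(ef)e = t(efe) = t(ef) = t$, using $efe = ef$ (commuting idempotents) and $t = t(ef)$ at the last two steps.

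Next I would handle part (ii). For the forward direction, $e \leq f$ gives $e = fg$ for some $g \in \E(S)$, and then $ef = (fg)f = f(fg) = fg = e$, while $fe = ef = e$ follows from the commutativity of idempotents. The reverse direction is a one-liner: if $e = fe$ with $e \in \E(S)$, then $e = f \cdot e$ exhibits $e \leq f$ by definition of the natural partial order.

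For part (iii) I would interpolate. Given $s \leq t$ and $u \leq v$: from $s = ss^*t$ we get $su = (ss^*)(tu)$ with $ss^* \in \E(S)$, so $su \leq tu$ by the characterization $x \leq y \iff x = fy$ for some idempotent $f$; and from $u = vu^*u$ we get $tu = (tv)(u^*u)$ with $u^*u \in \E(S)$, so $tu \leq tv$ directly from the definition. Part (i) then gives $su \leq tu \leq tv$, hence $su \leq tv$. The hard part will be nothing more than keeping the antisymmetry computation in part (i) honest — each substitution there must be justified by the commuting-idempotents identities — since every other step is routine manipulation with the already-proven equivalences.
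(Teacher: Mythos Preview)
Your proof is correct. The paper itself states this proposition without proof, as standard background material drawn from Lawson's text \emph{Inverse Semigroups}~\cite{lawson1998}; there is nothing in the paper to compare your argument against. For what it is worth, your argument is the standard one and each step checks out: the antisymmetry computation in (i) is clean (the key identity $efe = ef$ follows from $ef = fe$ and $e^2 = e$), and in (iii) your use of the characterizations $s = ss^*t$ and $u = vu^*u$ from the preceding proposition to interpolate through $tu$ is exactly how this is usually done.
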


\begin{defn}
	Let $(P,\leq)$ be a poset.  If $z \leq x,y$ then $z$ is a \emph{lower bound} of $x$ and $y$.  If $z$ is the maximum of the lower bounds it is called the \emph{greatest lower bound} and denoted by $x \wedge y$.  A \emph{meet semilattice} is a poset in which every pair of elements has a greatest lower bound. 
\end{defn}

\begin{prop}
	The set of idempotents $\E(S)$ of an inverse semigroup $S$ together with its induced partial order by $\leq$, form a meet semilattice. 
\end{prop}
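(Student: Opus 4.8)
The plan is to prove that for any pair of idempotents $e, f \in \E(S)$ their product $ef$ (which equals $fe$ by commutativity of idempotents) is the greatest lower bound of $\{e, f\}$ in the induced order; since $\E(S)$ already carries the restriction of the natural partial order $\leq$, exhibiting such a greatest lower bound for every pair is exactly the claim. Throughout I would lean on the earlier proposition characterizing the order on idempotents, namely that for $x, y \in \E(S)$ one has $x \leq y \iff x = xy = yx$, rather than unpacking the general definition $s \leq t \iff s = te$.

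First I would check that $ef \in \E(S)$: using that idempotents commute, $(ef)(ef) = e(fe)f = e(ef)f = (ee)(ff) = ef$, so $ef$ is idempotent. Second, I would show $ef$ is a lower bound of both $e$ and $f$. For $ef \leq e$ it suffices, by the idempotent characterization of $\leq$, to verify $(ef)e = e(ef) = ef$; both identities are immediate from idempotence and commutativity of $e$ and $f$. The inequality $ef \leq f$ follows by the symmetric computation (or by noting $ef = fe$ and that the side on which an idempotent factor sits is immaterial). Hence $ef$ is a lower bound of $\{e, f\}$ inside $\E(S)$.

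Third, I would establish maximality. Suppose $g \in \E(S)$ satisfies $g \leq e$ and $g \leq f$, so that $g = ge = eg$ and $g = gf = fg$. Then $g(ef) = (ge)f = gf = g$ and $(ef)g = e(fg) = eg = g$, so by the same characterization $g \leq ef$. Therefore $ef$ is the greatest lower bound, i.e. $e \wedge f = ef$ exists for every pair, and $(\E(S), \leq)$ is a meet semilattice.

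I do not anticipate a genuine obstacle here; the argument is entirely formal once commutativity of idempotents is in hand. The only point requiring care is invoking the idempotent-specific form of the order relation: working instead from $s \leq t \iff s = te$ would force one to produce witnessing idempotents explicitly (for $ef \leq e$ the witness is simply $f$, and for $ef \leq f$ one uses $ef = fe$ together with the left-right symmetry of the order), which is equivalent but slightly more fiddly. Either route closes the proof with no further input.
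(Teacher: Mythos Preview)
Your argument is correct and is the standard proof: $ef$ is idempotent, is a lower bound of $e$ and $f$, and dominates any common lower bound $g$, all verified via the characterization $x \leq y \iff x = xy = yx$ on $\E(S)$. The paper does not supply a proof of this proposition; it appears in an appendix summarizing background inverse-semigroup facts drawn from Lawson's text, so there is nothing to compare against beyond noting that your proof is exactly the one a reader would reconstruct.
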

	
In light of the previous proposition, it common to refer to $\E(S)$ as the \emph{semilattice of idempotents}.  Indeed, it is the case that every meet semilattice is an inverse semigroup in which all members are idempotent.
	
Wagner and Preston proved the inverse semigroup analogue to Cayley's Theorem in group theory, demonstrating that \emph{every} inverse semigroup is realized as an inverse semigroup of partial bijections of \emph{some} set.
\begin{defn}
	The \emph{symmetric inverse monoid} $I(X)$ on a set $X$ is defined to be the inverse monoid of partial bijections $f: A \to A'$ for some $A,A' \subseteq X$.
\end{defn}
To clarify, a \emph{partial bijection} as above is a bijective function whose domain and codomain are subsets of a given set.  For example, the collection of idempotent partial bijections of a given set $X$ is comprised exactly of the identity maps, $\{ id_A: A \to A: A \subseteq X \}$.
\begin{thm}[\textbf{Wagner-Preston Representation Theorem:}]
	Let $S$ be an inverse semigroup.  Then there is a set $X$ and an injective homomorphism $\Phi: S \to I(X)$ such that
	\begin{equation}
		a \leq b \iff \Phi(a) \subseteq \Phi(b)
	\end{equation}
\end{thm}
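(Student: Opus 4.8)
The plan is to carry out the classical Wagner--Preston construction, realizing $S$ faithfully inside the symmetric inverse monoid $I(X)$ on the set $X := S$ itself. For each $a \in S$ write $\d_a := a^*a$ and $\r_a := aa^*$ for the domain and codomain idempotents, consider the principal right ideal $\d_a S = \{\d_a x : x \in S\}$ (which contains $\d_a$, since $\d_a$ is idempotent), and define $\Phi(a) \in I(S)$ to be the partial map with $\Dom \Phi(a) = \d_a S$ acting by $\Phi(a)(x) = ax$. Two immediate checks show this is a well-defined partial bijection: for $x \in \d_a S$ one has $\r_a(ax) = aa^*ax = ax$, so $\Phi(a)$ carries $\d_a S$ into $\r_a S$; and $\Phi(a^*)$ is a two-sided inverse, since $\Phi(a^*)(\Phi(a)(x)) = a^*ax = x$ for $x \in \d_a S$ and symmetrically on $\r_a S$. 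Only the elementary identities $a\d_a = a$, $(a^*)^* = a$, and the idempotence of $\d_a, \r_a$ are used here.

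The one step with real content is that $\Phi$ is a homomorphism, $\Phi(ab) = \Phi(a)\circ\Phi(b)$ (composition of partial maps, the convention on the product in $I(S)$ being chosen to match; the opposite convention merely calls for right translations instead). On elements the identity is trivial, $\Phi(a)(\Phi(b)(x)) = a(bx) = (ab)x$, so everything turns on showing the two sides have the same domain. The domain of $\Phi(a)\circ\Phi(b)$ is $\{x : \d_b x = x \text{ and } \d_a(bx) = bx\}$, and I would check this equals $\Dom\Phi(ab) = \d_{ab}S = b^*a^*ab\,S$: for ``$\subseteq$'' note $b^*(a^*a)(bx) = b^*(bx) = \d_b x = x$, so $x = b^*a^*ab\,x$; for ``$\supseteq$'', given $x = b^*a^*ab\,s$ use $bb^*b = b$, $b^*bb^* = b^*$, and crucially the commutation $\r_b\,\d_a = \d_a\,\r_b$ of idempotents to reduce $bx$ to $\d_a bs$, from which both membership conditions follow. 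This domain bookkeeping is precisely where the hypothesis ``inverse'' (rather than merely ``regular'') is genuinely used, and I expect it to be the main obstacle.

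Injectivity follows quickly. If $\Phi(a) = \Phi(b)$ then the domains agree, $\d_a S = \d_b S$, and since $\d_a$ and $\d_b$ are the identity elements of this common right ideal we get $\d_a\d_b = \d_b$ and $\d_b\d_a = \d_a$, whence $\d_a = \d_b$ because idempotents commute. Evaluating the two maps at this common idempotent, $a = a\d_a = \Phi(a)(\d_a) = \Phi(b)(\d_b) = b\d_b = b$.

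Finally the order statement $a \leq b \iff \Phi(a) \subseteq \Phi(b)$. If $a \leq b$ then $\d_a \leq \d_b$ (from $a^* \leq b^*$ and compatibility of $\leq$ with products), so $\d_a S \subseteq \d_b S$ and hence $\Dom\Phi(a) \subseteq \Dom\Phi(b)$; writing $a = b\d_a$ (one of the equivalent forms of the natural partial order), for $x \in \d_a S$ we get $\Phi(a)(x) = ax = b\d_a x = bx = \Phi(b)(x)$, so $\Phi(a) \subseteq \Phi(b)$. Conversely, if $\Phi(a) \subseteq \Phi(b)$ then $\d_a S \subseteq \d_b S$, so $\d_a \in \Dom\Phi(b)$ and $a = \Phi(a)(\d_a) = \Phi(b)(\d_a) = b\d_a = ba^*a$, which is exactly the condition $a \leq b$. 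Assembling the four parts gives the theorem.
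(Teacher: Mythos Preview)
Your proof is correct and follows the classical Wagner--Preston construction, which is exactly what the paper invokes; the paper in fact does not give a proof but only records the standard choice $X=S$ with $\rho_s(t)=ts$ on $\Dom(\rho_s)=Ss^*$. Your version is the dual one (left translations $x\mapsto ax$ on $a^*aS$ rather than right translations on $Ss^*$), and you supply the details---the domain computation for $\Phi(ab)=\Phi(a)\circ\Phi(b)$, injectivity via $\d_a=\d_b$, and the order equivalence---that the paper omits.
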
 

As a standard choice of a set $X$ supporting a representation of a given inverse semigroup we may choose $X = S$ and map the member $s \in S$ to $\rho_s \in I(S)$ defining $\rho_s$ to be the partial bijection with domain $\d(\rho_s) = Ss^* = \{ ts^* : t \in S \}$ with $\rho_s(t) := ts$, for all $t \in \d(\rho_s)$.

\subsection{Local Structures}
The following excerpt is taken directly from Lawson's text \emph{Inverse Semigroups}. Proofs have been omitted. Readers interested in a detailed exposition are directed to Lawson~\cite{lawson1998}. Presently, our purpose is merely to indicate the natural occurrence of geometries represented by Inverse Semigroups and their actions on spaces.

At its simplest, differential geometry concerns spaces which look locally like pieces of $\R^n$ and the pseudogroups provide the glue to hold these pieces together.  We now introduce the mathematics which will enable us to make this idea precise.

Let $X$ and $Y$ be topological spaces.  In what follows, the space $X$ will be the \emph{model space} and our aim will be to construct geometric structures on $Y$ which are locally like pieces of $X$.  A \emph{chart} from $X$ to $Y$ is a homeomorphism $\phi: U \to V$ between open subsets of $X$ and $Y$ respectively.  Thus a chart is nothing other than a special kind of partial bijection. An \emph{atlas} from $X$ to $Y$ is a collection of charts from $X$ to $Y$ such that the union of the images of the charts is $Y$; thus $1_Y = \bigcup\limits_{\phi \in \A} \phi \phi^{-1}$. If this latter condition does not hold, we shall sat that $\A$ is a \emph{partial atlas}.

If $\A$ is a partial atlas from $X$ to $Y$ and $\B$ is a partial atlas from $Y$ to $Z$, then $\B \A$ is the partial atlas from $X$ to $Z$ defined by
$$
\B \A = \{ \psi \phi: \psi \in \B \mbox{ and } \phi \in \A \}
$$
If $\A$ is a partial atlas from $X$ to $Y$ then
$$
\A^{-1} = \{ \phi^{-1}: \phi \in \A \}
$$
is a partial atlas from $Y$ to $X$.

Intuitively, the existence of an atlas from $X$ to $Y$ means that $Y$ can be described by a family of overlapping sets each of which looks like a piece of $X$.  An atlas, in the geographical sense, provides a good example of an atlas in our sense from $\R^2$ to a sphere.

The problem now arises of dealing with the overlaps between different charts, and it is here that pseudogroups get into the picture.

Let $\phi_i: U_i \to V_i$ and $\phi_j: U_j \to V_j$ be any charts in the (partial) atlas $\A$ from $X$ to $Y$.  Then we can form the partial homeomorphism
$$
\phi_j^{-1} \phi_i: \phi_i^{-1}(V_i \cap V_j) \to \phi_j^{-1}(V_i \cap V_j)
$$
between the open subsets $\phi_i^{-1}(V_i \cap V_j)$ and $\phi_j^{-1}(V_i \cap V_j)$ of $X$.  Partial homeomorphisms such as these are called \emph{transition functions} of the atlas.  Thus the transition functions of a (partial) atlas $\A$ from $X$ to $Y$ belong to the pseudogroup $\Gamma(X)$ of all partial homeomorphisms between the open subsets of $X$.  This property can be succinctly expressed by the equation $\A^{-1} \A \subseteq \Gamma(X)$.
In order to construct geometric structures, we shall need to know under what circumstances partial bijections can be glued together.

\begin{prop}
	Let $f,g: X \to Y$ be partial bijections between the sets $X$ and $Y$.
	\begin{enumerate}[(i)]
		\item $f \cup g$ is a partial bijection precisely when $fg^{-1}$ is an idempotent.
		\item $f \cup g$ is a partial bijection precisely when $f^{-1}g$ and $fg^{-1}$ are idempotents.
		\item Let $\{ \phi_i : i \in I \}$ be a family of partial bijections from $X$ to $Y$.  Then $\cup \phi_i$ is a partial bijection $\iff$ $\phi_i \cup \phi_j$ is a partial bijection for all $i,j \in I$.
		\item Let $\{ \phi_i : i \in I \}$ be a family of partial bijections from $X$ to $Y$ such that $\cup \phi_i$ is a partial bijection.  Then $\cup phi_i^{-1}$ is a partial bijection from $Y$ to $X$ and
		$$
			\left( \cup \phi_i \right)^{-1} = \cup \phi_i^{-1}.
		$$
		\item Let $\{ \phi_i : i \in I \}$ be a family of partial bijections from $X$ to $Y$ such that $\cup phi_i$ is a partial bijection. Then from any partial bijection $\phi: W \to Y$ the union $\cup \phi_i \phi$ is a partial bijection from $W$ to $Y$ such that
		$$
			\left( \cup \phi_i \right) \phi = \cup \phi_i \phi.
		$$
		Similarly, for any partial bijection $\psi: Y \to Z$ the union $\cup \psi \phi_i$ is a partial bijection from $X$ to $Z$ such that 
		$$
			\psi \left( \cup \phi_i \right) = \cup \psi \phi_i.
		$$
	\end{enumerate}
\end{prop}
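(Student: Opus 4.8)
The plan is to work concretely with the \emph{graphs} of the partial bijections. Identify a partial bijection $f\colon X\to Y$ with its graph $f\subseteq X\times Y$; recall that such an $f$ is a partial bijection exactly when neither $f$ nor its converse $f^{-1}=\{(y,x):(x,y)\in f\}$ contains two pairs sharing a first coordinate, that composition of these relations is just ordinary composition of relations, and that the idempotents of the symmetric inverse monoid are precisely the partial identities, i.e.\ the subsets of the diagonal. Under this dictionary all five assertions become elementary set-theoretic bookkeeping. I would establish the two compatibility criteria (i) and (ii) first, then the pairwise-compatibility criterion (iii), and finally the two purely formal facts (iv) and (v).

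For (i) and (ii) I would simply compute the composites $fg^{-1}$ and $f^{-1}g$ as partial maps. The point is that a composite is idempotent precisely when it is a restriction of the identity, and writing it out shows this is equivalent to $f$ and $g$ agreeing on an explicitly describable set of points. For one of the composites that set is $\Dom(f)\cap\Dom(g)$, which is exactly the condition that $f\cup g$ be single-valued; for the other — equivalently, by applying the first observation to $f^{-1}$ and $g^{-1}$ — one gets the condition that $(f\cup g)^{-1}$ be single-valued, i.e.\ that $f\cup g$ be injective. Part (i) is then the single-valuedness half of this analysis and part (ii) is the conjunction of both halves, since $f\cup g$ is a partial bijection iff it is both single-valued and injective. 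The only real care needed is to fix the composition convention consistently and to pin down the exact domains of the composites.

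For (iii), the forward direction is immediate: a sub-relation of a partial bijection is again a partial bijection, and $\phi_i\cup\phi_j\subseteq\bigcup_k\phi_k$. For the converse I would use that any failure of single-valuedness or of injectivity is witnessed by just two pairs: if $(x,y_1),(x,y_2)\in\bigcup_i\phi_i$ then $(x,y_1)\in\phi_i$ and $(x,y_2)\in\phi_j$ for some $i,j$, so both pairs already lie in $\phi_i\cup\phi_j$, forcing $y_1=y_2$ by hypothesis; the injectivity clause is handled identically with the two coordinates interchanged. Hence global compatibility reduces to pairwise compatibility.

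Parts (iv) and (v) are formal. For (iv) one observes that passing to converses commutes with arbitrary unions of relations, $\big(\bigcup_i\phi_i\big)^{-1}=\bigcup_i\phi_i^{-1}$, and that the converse of a partial bijection is a partial bijection, so $\bigcup_i\phi_i^{-1}$ is exactly the partial bijection $\big(\bigcup_i\phi_i\big)^{-1}$. For (v) one uses that composition of relations distributes over unions on either side, together with the fact (immediate from the graph description) that the composite of two partial bijections is a partial bijection: then $\psi\big(\bigcup_i\phi_i\big)=\bigcup_i\psi\phi_i$ and $\big(\bigcup_i\phi_i\big)\phi=\bigcup_i\phi_i\phi$, and each is a partial bijection because $\psi$, $\phi$, and $\bigcup_i\phi_i$ are. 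I expect (i) and (ii) to be the main obstacle — not because they are deep, but because they demand carefully disentangling the composition convention and the precise domain of each composite before the idempotency condition can be read off; the remaining parts are essentially diagram-chasing with graphs.
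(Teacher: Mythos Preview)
The paper does not actually prove this proposition: the passage in Appendix~\ref{sym} is quoted from Lawson~\cite{lawson1998} with the explicit remark that ``Proofs have been omitted.'' So there is nothing in the paper to compare your argument against. Your approach---working with graphs of relations, reading idempotency as ``is a partial identity,'' reducing (iii) to a two-element witness, and handling (iv)--(v) via the purely relational facts that converse and composition commute with arbitrary unions---is exactly the standard route and is correct.

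One point worth flagging: statement~(i) as transcribed in the paper says ``partial bijection,'' but as you implicitly recognise, idempotency of a single composite $fg^{-1}$ only controls \emph{one} of the two conditions (single-valuedness or injectivity, depending on the composition convention), not both. Your reading of (i) as the single-valuedness half and (ii) as the conjunction of both halves is the right one; in Lawson's original the word in (i) is ``function,'' not ``bijection.'' You handle this correctly, but it would strengthen your write-up to say so explicitly rather than leave the discrepancy with the stated claim unaddressed.
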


On the basis of (ii) above, partial bijections $f$ and $g$ are said to be \emph{compatible}, written $f \sim g$ if $f \cup g$ is a partial bijection.  An inverse subsemigroup $S$ of $I(X)$ is said to be \emph{complete} if the union of every non-empty compatible subset of $S$ belongs to $S$.  Clearly, $I(S)$ is complete.

Let $X$ be a topological space and $\Gamma = \Gamma(X)$ be the pseudogroup of all partial homeomorphisms between open subsets of $X$.  A \emph{complete pseudogroup} on $X$ is an inverse subsemigroup $\Gamma' \subseteq \Gamma$ with the property that the union of every non-empty compatible subset of $\Gamma'$ belongs to $\Gamma'$. 

Let $X$ and $Y$ be topological spaces, where $X$ is the model space, and let $\Gamma'$ be a complete pseudogroup on $X$.  An atlas $\A$ from $X$ to $Y$ is said to be \emph{compatible} with $\Gamma'$ if $\A^{-1}\A \subseteq \Gamma'$.  We are therefore requiring that all the transition functions belong to a specified inverse subsemigroup of $\Gamma(X)$.

Let $\A$ and $\B$ be atlases from $X$ to $Y$ compatible with $\Gamma'$.  If $\A^{-1}\B \subseteq \Gamma'$ then we say that $\A$ and $\B$ are \emph{compatible modulo } $\Gamma'$, denoted $\A \sim \B \mod \Gamma'$.

\begin{prop}
	Let $\Gamma'(X,Y)$ be the set of all atlases from $X$ to $Y$ compatible with a the complete pseudogroup $\Gamma'$.
	\begin{enumerate}[(i)]
		\item The relation $\A \sim \B \mod \Gamma'$ is an equivalence relation on $\Gamma'(X,Y)$.
		\item Each equivalence class contains a maximum element.
	\end{enumerate}
\end{prop}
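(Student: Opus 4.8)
The plan is to verify the three equivalence-relation axioms for part (i) and then to construct the maximum of an arbitrary class explicitly for part (ii); throughout, recall that $\A \sim \B \bmod \Gamma'$ means $\A^{-1}\B \subseteq \Gamma'$.

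For part (i), \textbf{reflexivity} is simply the hypothesis that the members of $\Gamma'(X,Y)$ are atlases compatible with $\Gamma'$, i.e. $\A^{-1}\A \subseteq \Gamma'$. \textbf{Symmetry} follows because $\Gamma'$, being an inverse subsemigroup of $\Gamma(X)$, is closed under weak inverses, while $(\A^{-1}\B)^{-1} = \B^{-1}\A$; hence $\A^{-1}\B \subseteq \Gamma'$ forces $\B^{-1}\A \subseteq \Gamma'$. \textbf{Transitivity} is the only substantial point. Assuming $\A^{-1}\B \subseteq \Gamma'$ and $\B^{-1}\C \subseteq \Gamma'$, fix charts $\phi \in \A$ and $\chi \in \C$; we must show $\phi^{-1}\chi \in \Gamma'$. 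For each chart $\psi \in \B$ we have $\phi^{-1}\psi \in \Gamma'$ and $\psi^{-1}\chi \in \Gamma'$, so $\phi^{-1}(\psi\psi^{-1})\chi = (\phi^{-1}\psi)(\psi^{-1}\chi) \in \Gamma'$ since $\Gamma'$ is closed under composition. Because $\psi\psi^{-1}$ is the identity on $\Im(\psi)$, each such element is a restriction of the single partial homeomorphism $\phi^{-1}\chi$, so the family $\{\phi^{-1}(\psi\psi^{-1})\chi : \psi \in \B\}$ is pairwise compatible; and since $\B$ is an atlas, $\bigcup_{\psi \in \B}\psi\psi^{-1} = 1_Y$, so distributing composition over the union of this compatible family gives $\bigcup_{\psi \in \B}\phi^{-1}(\psi\psi^{-1})\chi = \phi^{-1}1_Y\chi = \phi^{-1}\chi$. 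Completeness of the pseudogroup $\Gamma'$ then puts this union in $\Gamma'$, whence $\A^{-1}\C \subseteq \Gamma'$. I expect this gluing step --- which is precisely where both the covering property of $\B$ and the completeness of $\Gamma'$ are needed --- to require the most care; the rest is bookkeeping.

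For part (ii), given the equivalence class of an atlas $\A$, set $\widehat{\A} := \bigcup\{\B : \B \sim \A\}$, the union of all atlases in the class (a well-defined set of charts from $X$ to $Y$). I would check four things: (a) $\widehat{\A}$ is an atlas, since it is a collection of charts and $\A \subseteq \widehat{\A}$ already has images covering $Y$; (b) $\widehat{\A}$ is compatible with $\Gamma'$, since for $\phi$ in some $\B \sim \A$ and $\phi'$ in some $\B' \sim \A$ we get $\B \sim \B'$ by symmetry and transitivity, hence $\phi^{-1}\phi' \in \B^{-1}\B' \subseteq \Gamma'$, so $\widehat{\A}^{-1}\widehat{\A} \subseteq \Gamma'$; (c) $\widehat{\A} \sim \A$, since for $\phi$ in some $\B \sim \A$ and $\psi \in \A$ we have $\phi^{-1}\psi \in \B^{-1}\A \subseteq \Gamma'$; and (d) $\widehat{\A}$ is maximal, since by construction $\B \subseteq \widehat{\A}$ for every $\B \sim \A$. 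Items (a)--(d) together exhibit $\widehat{\A}$ as the maximum element, under inclusion, of the equivalence class of $\A$.
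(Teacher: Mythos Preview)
Your argument is correct and is the standard one. Note, however, that the paper does not actually supply a proof of this proposition: the passage on local structures is an excerpt from Lawson's \emph{Inverse Semigroups}, and the paper explicitly states that proofs have been omitted. So there is nothing to compare against beyond observing that your approach matches the expected line of reasoning --- reflexivity and symmetry are immediate from the inverse-semigroup axioms, transitivity is exactly the gluing step that invokes the atlas property of $\B$ together with completeness of $\Gamma'$, and the maximum element is the union of the equivalence class.
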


An atlas $\A$ from $X$ to $Y$ which is the maximum element in its equivalence class modulo $\Gamma'$ is called a \emph{complete atlas compatible with } $\Gamma'$, and is said to define a $\Gamma'$-\emph{structure} on $Y$.  Such structures are often called \emph{local structures}.  Any atlas $\A$ from $X$ to $Y$ determines a $\Gamma'$ structure on $Y$, namely the one determined by the maximum element of the equivalence class containing $\A$.  Local structures are the analogues of of the geometries in the Erlanger Programm and the pseudogroup $\Gamma'$ replaces the group.

Differential manifolds are an example of a local structure.  Let $Y$ be a Hausdorff space with a countable basis of open sets, let $X = \R^n$ and the $\Gamma'$ be the inverse semigroup of all diffeomorphisms between open sets of $\R^n$.  This is a complete pseudogroup.  Then an atlas from $\R^n$ to $Y$ compatible with $\Gamma'$ defines a \emph{smooth differentiable manifold} on $Y$

\section{Some Categorical Concepts}\label{cats}
Category Theory distinguishes itself from other foundational mathematics by being in the first place, concerned with the relationships between objects.  This directly contrasts with Set Theory and its central focus on objects: namely, sets.  Proponents laud its far reaching results and vast applicability. Detractors often cite this generality as a weakness.  Regardless, the limited category theoretic tools I use within this paper serve to simplify the expression of certain ideas.  Consequently, I value them.  Below, I present a short listing of definitions and concepts punctuated with examples drawn largely from familiar categories such as:
\begin{description}
\item[\catname{Grp}:] the category of groups together with group homomorphisms.
\item[\catname{Ab}:] the category of abelian groups together with group homomorphisms.
\item[\Top:] the topological spaces with morphisms given by continuous maps.
\item[\Set:] the category of sets with functions.
\item[\catname{P}:] where $\catname{P} := (P,\leq)$ is a poset. These are naturally categories with objects given by the members of the underlying set and morphisms given by the partial order.
\item[\Meas:] the category of measurable spaces $(X,\A)$ together with measurable maps.
\end{description}

Category Theory is an established branch of mathematics with all the attendant depth of abstraction and specialization.  However, the definitions and concepts used here are basic and basically presented.  Readers interested in becoming acquainted with Category Theory proper would do well to look through Mac Lane's text \emph{Categories for the Working Mathematician}~\cite{maclane1998}. Additionally, the nLab is an excellent resource for content and has proven time and again to be an invaluable source of discussion on category theoretic perspective and style.\footnote{\url{http://ncatlab.org}}

\subsection{Basic Definitions}
At its most basic, a \emph{category} $\catC$ consists of a collection of \emph{objects}: $Obj(\catC)$ and a collection of \emph{morphisms} or \emph{arrows}: $mor(\catC)$ together with functions:
\begin{itemize}
	\item $d,c: mor(\catC) \to obj(\catC)$ which assign, to every morphism, its \emph{domain} and \emph{codomain} respectively
	\item a partial function $\circ: mor(\catC) \times mor(\catC) \to mor(\catC)$ which assigns the \emph{composite} morphism $g \circ f$ to any pair of morphisms $f,g$ such that $d(f) = c(g)$
	\item a function $id: obj(\catC) \to mor(\catC)$ which assigns to each object $x$ an \emph{identity morphism}, $id_x$.
\end{itemize}
satisfying the following properties:
\begin{itemize}
	\item $d(g \circ f) = d(f)$ and $c(g \circ f) = c(g)$
	\item $d(id_x) = x$ and $c(id_x) = x$
	\item Composition is associative: $(h \circ g) \circ f = h \circ (g \circ f)$ when such compositions are defined.
	\item If $d(f) = x$ and $c(f) = y$, then $id_y \circ f = f = f \circ id_x$.
\end{itemize}

The \emph{opposite} category $\catC^{op}$ is the category generated by formally switching the roles of the domain and codomain functions e.g. switching the direction of all the arrows in the category.  For example, if a category $\catC$ contains diagram of arrows to left in the image below, then $\catC^{op}$ contains the diagram of arrows to the right:
\begin{equation}\begin{array}{ccc}
\begin{tikzcd}
	  x			\ar{r}
	& y			\ar{r}
	& z			
	\\
	  a			
	& b			\ar[xshift = -.5ex]{u}
				\ar[xshift = +.5ex]{u}
				\ar{l}
	\\
\end{tikzcd}
& \hspace{10ex} &
\begin{tikzcd}
	  x			
	& y			\ar{l}
				\ar[xshift = -.5ex]{d}
				\ar[xshift = +.5ex]{d}
	& z			\ar{l}
	\\
	  a			\ar{r}
	& b				
	\\
\end{tikzcd}
\end{array}\end{equation}

\subsection{Functors}
A functor is what goes between categories e.g. a morphism of categories. They were first explicitly defined in Algebraic Topology but are ubiquitous in modern mathematics forming as they do bridges between seemingly different subjects.

Formally a functor $P: \catC \to \catD$ is composed of two maps; an object map $P: obj(\catC) \to obj(\catD)$ and a morphism map $P: mor(\catC) \to mor(\catD)$ sending $f: x \to y$ to $Pf: Px \to Py$ such that 
\begin{enumerate}[(i)]
	\item $P(id_x) = id_{P(x)}$ for all $x \in obj(\catC)$
	
	\item $P(g \circ f) = P(g) \circ P(f)$ for all ${f,g} \in mor(\catC)$
\end{enumerate}
Dually, a co-functor $Q: \catC \to \catD$ is composed of an object map $Q: obj(\catC) \to obj(\catD)$ and a morphism map $Q: mor(\catC) \to mor(\catD)$ such that
\begin{enumerate}[(i)]
	\item $Q(id_x) = id_{Q(x)}$ for all $x \in obj(\catC)$
	
	\item $Q(g \circ f) = Q(f) \circ Q(g)$ \emph{in the opposite direction} for all ${f,g} \in mor(\catC)$
\end{enumerate}
That is, a co-functor is a functor from the opposite category $\catC^{op} \to \catD$.
\begin{ex}
	A simple example is the power set functor $\mathcal{P}:\Set \to \Set$.  Its object function maps a set $X \mapsto \mathcal{P}X$ the power set of $X$; the morphism map assigns to each function $f: X \to Y$ the function $\mathcal{P}f: \mathcal{P}X \to \mathcal{P}Y$ which sends each subset $P \subseteq X$ to its image $fS \subseteq Y$. 
\end{ex}
\begin{ex}
	Homology provides an example in Topology. Given a topological space $X$, taking homology provides an abelian group $H_n (X)$ for each $n \in \N$.  In addition, given a continuous map $f: X \to Y$ the construction gives a group homomorphism $H_n f: H_n (X) \to H_n (Y)$ relating the homologies of the spaces $X$ and $Y$.
\end{ex}
\begin{ex}
	\emph{Forgetful functors} are also very common.  For instance, the functor $U: \catname{Grp} \to \Set$ which takes a groups $G$ and maps them to their underlying set 'forgetting' the group structure and similarly, taking homomorphisms $f: G \to H$ to the underlying map of sets.
\end{ex}

\subsection{Limits and Colimits}\label{catlimits}
Rather than give an explicit definition of limits and colimits (via universal arrows / objects out of or into certain diagrams) which is really beyond the scope of this Appendix.  We will quote the nLab's introduction to the concept and then present a few examples of special limits.\footnote{Definition source nLab Limit: \url{http://ncatlab.org/nlab/show/limit}.}
	\begin{quotation}
		In category theory a limit of a diagram $F: \catD \to \catC$ in a category $\catC$ is an object $\lim F$ of $\catC$ equipped with morphisms to the objects $F(d)$ for all $d \in \catD$, such that everything in sight commutes. Moreover, the limit $\lim F$ is the universal object with this property, i.e. the “most optimized solution” to the problem of finding such an object.
		
		The limit construction has a wealth of applications throughout category theory and mathematics in general...So in some sense the limit object $\lim F$ “subsumes” the entire diagram $F(D)$ into a single object, as far as morphisms into it are concerned. The corresponding universal object for morphisms out of the diagram is the colimit.
	\end{quotation}
The product and coproduct constructions are two familiar examples of categorical limits.
\begin{defn}
	In a category $\catC$, the \emph{product} of two objects $X$ and $Y$ (if it exists), is an object $X \times Y$ together with two morphisms called \emph{projections} $\pi_X: X \times Y \to X$ and $\pi_Y: X \times Y \to Y$ such that for any object $Z$ for which there exist morphisms $f: Z \to X$ and $g: Z \to Y$, there exists a unique $h: Z \to X \times Y$ such that $\pi_X \circ h = f$ and $\pi_Y \circ h = g$.
\end{defn}
\noindent Equivalently, a product in $\catC$ is the limit, $\varprojlim F$ over a functor from the discrete (only arrows are identities) category with two elements $F: \{ \cdot, \cdot \} \to \C$. Some examples of products in various categories are:
\begin{itemize}
	\item $G \times H \in \catname{Grp}$ defined with the pointwise group law.
	\item The product space in $\Top$ given by the cartesian product of the underlying sets equipped with the product topology e.g. the coarsest topology for which the projection maps are continuous.
	\item in $\Set$ the product is the Cartesian Product.
	\item in a poset $(P, \leq)$ the categorical product (if it exists) of two elements $a,b$ is their meet $a \wedge b$.
\end{itemize}

Coproducts are dually defined as colimits, $\varinjlim F$ over functors $F: \{ \cdot, \cdot \} \to \catC$ or equivalently,
\begin{defn}
	In $\catC$, the \emph{coproduct} of two objects $X$ and $Y$, is an object $X \coprod Y$ together with a pair of morphisms called \emph{injections} $\iota_X : X \to X \coprod Y$ and $\iota_Y: Y \to X \coprod Y$ such that for any other object $Z$ with morphisms $f: X \to Z$ and $g: Y \to Z$ there exists a unique $u: X \coprod Y \to Z$ such that $u \circ \iota_X = f$ and $u \circ \iota_Y = g$.
\end{defn}
\noindent Some examples of coproducts are:
\begin{itemize}
	\item In $\catname{Grp}$, the coproduct of two groups $G$ and $H$ is their free product $G \star H$.\footnote{Note that the underlying set of $G \star H$ is NOT the disjoint union of the underlying sets of $G$ and $H$.}
	\item The coproduct in $\Set$ is given by the disjoint union.
	\item For a poset $(P,\leq)$ the coproduct (if it exists) of two elements $a$ and $b$ is their join $a \vee b$.
\end{itemize}

In both products and coproducts we've been taking limits over the discrete category with two elements.  If the source category for a limit is instead a \emph{directed category} such as the ordinal category $\omega := \{ 0 \to 1 \to 2 \to \cdots \}$, the resulting limits are referred to as \emph{directed limits}.  Within this work, we are primarily interested in $\omega$-directed colimits and $\omega^{op}$-directed limits as these limits, taken over functors into $\Set$ represent $\varinjlim A_n = \bigcup A_n$ where $A_0 \subseteq A_1 \subseteq \cdots$ is a nested increasing sequence of sets, and $\varprojlim B_n = \bigcap B_n$ where $B_0 \supseteq B_1 \supseteq \cdots$ is a nested decreasing sequence of sets.
\begin{defn}
	A \emph{directed set} $(I,\preceq)$ is a non-empty set with a preorder satisfying the property that given $x,y \in I$ there exists $z \in I$ such that $x,y \preceq z$. A category $\catname{I}$ with objects given by the members of a directed set and morphisms $x \to y \iff x \preceq y$ is called a directed \emph{category}.
\end{defn}
So, a directed colimit is the colimit; $\varinjlim F$ where $F: I \to \catC$ is a functor whose domain category is directed. Formally,
\begin{defn}
	Let $\C$ be a category.  An \emph{inductive system} in $\C$ consists of a directed set $I$, a family $(A_i)_{i \in I}$ of objects of $\C$, and a family $(f_{ij}:A_i \to A_j)_{i \preceq j \in I}$ of morphisms, such that:
	\begin{enumerate}[(i)]
		\item $f_{ii}: A_i \to A_i$ is the identity morphism on $A_i$
		\item $f_{ik}: A_i \to A_k$ is the composite $f_{jk} \circ f_{ij}$ for any $j$ between $i$ and $k$.
	\end{enumerate}
\end{defn}
An \emph{inductive cone} of this inductive system is an object $\Psi$ and a family of \emph{inductions} $\iota_i: A_i \to \Psi$ such that $$\iota_i = \iota_j \circ f_{ij}$$  Finally, an \emph{inductive limit} or \emph{directed colimit} of the inductive system is an inductive cone $\varinjlim A_i$ (suppressing the $f$'s and $\iota$'s) which is universal in that, given any inductive cone $\Psi$, there exists a unique morphism $u: \varinjlim A_i \to \Psi$ such that $$\iota_i = u \circ \iota_i$$ (where the left hand $\iota$ is from the cone $\Psi$ and the right hand is from the limit).\footnote{Definition source nLab Directed colimit: \url{http://ncatlab.org/nlab/show/directed+colimit}.}

The primary example of interest in this paper is the $\omega$-directed colimit in $\Set$.  A functor $F: \omega \to \Set$ identifies an inductive system.  Taking $A_i := F(i)$ and $f_{ij} := F(i \to j)$ we have the following diagram in $\Set$ representing a nested sequence of sets:
\begin{equation}\begin{tikzcd}[column sep = 4ex]
	A_0			\ar{rr}{f_{01}}
	&
	& A_1		\ar{rr}{f_{12}}
	&
	& A_2		\ar{rr}{f_{23}}
	&
	& \cdots
\end{tikzcd}\end{equation}
An inductive cone of of this inductive system is represented by the commutative diagram:
\begin{equation}\begin{tikzcd}[column sep = small]
	  A_0		\ar{rr}{f_{01}}
	  			\ar{drrr}{\iota_0}
	&
	& A_1		\ar{rr}{f_{12}}
				\ar{dr}{\iota_1}
	&
	& A_2		\ar{rr}{f_{23}}
				\ar{dl}[swap]{\iota_2}
	&
	& \cdots
	\\
	&
	&
	& \Psi
	\\
\end{tikzcd}\end{equation}
And finally, we see that the directed colimit of must be $\varinjlim A_n = \bigcup A_n$ since that is the least (in the partial order $\subseteq$) set admitting inclusion maps $\iota_n: A_n \to \Psi$ for all $n$.

We similarly define the \emph{directed limit} as the limit, $\varprojlim F$ of a functor $F$ from a directed category. The formal definition via projective systems is given by:
\begin{defn}
	A \emph{projective system} in $\catC$ consists of a directed set $I$, a family $(B_i)_{i \in I}$ of objects in $\catC$, and a family $(f_{ij}: B_i \to B_j)_{i \preceq j \in I}$ of morphisms, such that:
	\begin{enumerate}[(i)]
		\item $f_{ii}$ is the identity morphism on $B_i$
		\item $f_{ik}$ is the composite $f_{jk} \circ f_{ij}$ for any $j$ between $i$ and $k$.
	\end{enumerate}
\end{defn}
A \emph{projective cone} of this projective system is an object $\Psi$ together with a family of \emph{projections} $\pi_i: \Psi \to B_i$ such that $$\pi_i = f_{ij} \circ \pi_j$$ The \emph{projective limit} or \emph{directed limit} of the inductive system is a projective cone $\varprojlim B_i$ which is universal in that, given any projective cone $\Psi$, there exists a unique morphism $u: \Psi \to \varprojlim B_i$ such that $$\pi_i = \pi_i \circ u$$ (where the left hand $\pi$ is from the cone $\Psi$ and right hand $\pi$ is from the limit).

Again, we are primarily interested in $\omega^{op} = \{ 0 \gets 1 \gets 2 \gets \cdots \}$ directed limits in $\Set$.  The diagram of a projective cone of such a limit being:
\begin{equation}\begin{tikzcd}[column sep = small]
	&
	&
	& \Psi		\ar{dlll}[swap]{\pi_0}
				\ar{dl}{\pi_1}
				\ar{dr}{\pi_2}
	\\
	  B_0
	&
	& B_1		\ar{ll}{f_{10}}
	&
	& B_2		\ar{ll}{f_{21}}
	&
	& \cdots	\ar{ll}{f_{32}}
\end{tikzcd}\end{equation}
one can see that the limit $\varprojlim B_n = \bigcap B_n$ the largest set which is a subset of all the $B_i$.

\bibliographystyle{plain}
\bibliography{biblio_tyler}

\end{document}